\newcommand{\IC}[0]{\mathbb{C}}
 \newcommand{\IN}[0]{\mathbb{N}}
 \newcommand{\IT}[0]{\mathbb{T}}
\newcommand{\C}{\ensuremath{\mathbb{C}}}   
 \newcommand{\CF}[0]{\mathcal{F}}
 \newcommand{\CH}[0]{\mathcal{H}}
\newcommand{\CK}[0]{\mathcal{K}} \newcommand{\CL}[0]{\mathcal{L}}
 \newcommand{\CN}[0]{\mathcal{N}}
\newcommand{\CO}[0]{\mathcal{O}} 
 \newcommand{\CT}[0]{\mathcal{T}}
\newcommand{\CU}[0]{\mathcal{U}} 
 \newcommand{\CX}[0]{\mathcal{X}}
\newcommand{\supp}[0]{\operatorname{supp}}	
\newcommand{\Span}[0]{\operatorname{span}}
\newcommand{\id}[0]{\operatorname{id}}		
\newcommand*\into{\ensuremath{\lhook\joinrel\relbar\joinrel\rightarrow}}  
\newtheorem{theorem}{Theorem}[section]
\newtheorem{lemma}[theorem]{Lemma}
\newtheorem{proposition}[theorem]{Proposition}
\newtheorem{corollary}[theorem]{Corollary}
\theoremstyle{remark}
\newtheorem{remark}[theorem]{\bfseries Remark}
\newtheorem{definition}[theorem]{\bfseries Definition}
\newtheorem{example}[theorem]{\bfseries Example}
\newtheorem*{theorem*}{Theorem}
\newtheorem*{lemma*}{Lemma}
\newtheorem*{remark*}{Remark}
\numberwithin{equation}{section}
\newcommand{\Z}{\mathbb{Z}}
\newcommand{\N}{\mathbb{N}}
    \newcommand*{\qrr@gobblenexttocentry}[5]{}
    \newcommand*{\qrr@gobblenexttocentry}[4]{}
\newcommand*{\addsubsection}{%
    \addtocontents{toc}{\protect\qrr@gobblenexttocentry}%
    \subsection}
\begin{document}
\title{Topological freeness for $*$-commuting covering maps}

\author{Nicolai Stammeier}
\address{Mathematisches Institut, Westf\"{a}lischen Wilhelms-Universit\"{a}t M\"{u}nster \\ \newline\hspace*{4mm}Einsteinstrasse 62 \\ 48149 M\"{u}nster \\ Germany}
\email{n.stammeier@wwu.de}

\thanks{Research supported by DFG through SFB $878$ and by ERC through AdG $267079$.}

\date{\today}

\subjclass[2010]{$46L55$}

\keywords{dynamical systems, local homeomorphisms, product systems, Parseval frames, simplicity, topological freeness}

\begin{abstract}
A countable family of $*$-commuting surjective, non-injective local homeomorphisms of a compact Hausdorff space $X$ gives rise to an action $\theta$ of a countably generated, free abelian monoid $P$. For such a triple $(X,P,\theta)$, which we call an irreversible $*$-commutative dynamical system, we construct a universal $C^*$-algebra $\mathcal{O}[X,P,\theta]$. Within this setting we show that the following four conditions are equivalent: $(X,P,\theta)$ is topologically free, $C(X) \subset \mathcal{O}[X,P,\theta]$ has the ideal intersection property, the natural representation of $\mathcal{O}[X,P,\theta]$ on $\ell^2(X)$ is faithful, and $C(X)$ is a masa in $\mathcal{O}[X,P,\theta]$. As an application, we characterise simplicity of $\mathcal{O}[X,P,\theta]$ by minimality of $(X,P,\theta)$. We also show that $\mathcal{O}[X,P,\theta]$ is isomorphic to the Cuntz-Nica-Pimsner algebra of a product system of Hilbert bimodules naturally associated to $(X,P,\theta)$. Moreover, we find a close connection between $*$-commutativity and independence of group endomorphisms, a notion introduced by Cuntz and Vershik. This leads to the observation that, for commutative irreversible algebraic dynamical systems of finite type $(G,P,\theta)$, the dual model $(\hat{G},P,\hat{\theta})$ is an irreversible $*$-commutative dynamical system and $\mathcal{O}[\hat{G},P,\hat{\theta}]$ is canonically isomorphic to $\mathcal{O}[G,P,\theta]$. This allows us to conclude that minimality of $(G,P,\theta)$ is not only sufficient, but also necessary for simplicity of $\mathcal{O}[G,P,\theta]$ if $(G,P,\theta)$ is commutative and of finite type. 
\end{abstract}

\maketitle
\section*{Introduction}

\noindent Classically, the first object to study in the theory of topological dynamical systems is a single homeomorphism $\sigma$ of a compact Hausdorff space $X$, which induces an automorphism $\alpha$ of $C(X)$ via $\alpha(f) := f \circ \sigma$. The C*-algebra naturally associated to $(X,\sigma)$ is the crossed product $C(X)\rtimes_{\alpha}\Z$ generated by a copy of $C(X)$ and a unitary $u$ that implements $\alpha$ in the sense that $ufu^* = \alpha(f)$ holds for all $f \in C(X)$.

It is well known that the crossed product is simple if and only if the topological dynamical system is minimal in the sense that the only closed, $\sigma$-invariant subsets of $X$ are $\emptyset$ and $X$. Looking for a generalization of this result to the case of $\Z^{d}$-actions, minimality of $(X,\Z^{d},\sigma)$ alone turned out to be insufficient for simplicity of $C(X)\rtimes_{\alpha}\Z^{d}$, unless the action is free. This is automatic in the case of a single, minimal homeomorphism on an infinite space $X$ and means that $\sigma_{n}$ has no fixed points for all $n \neq 0$. Soon it turned out that simplicity of the transformation group C*-algebra does not detect the combination of minimality and freeness on the nose. Instead, one has to weaken freeness to topological freeness, where the set of fixed points of $\sigma_{n}$ is required to have empty interior for each $n \neq 0$, see \cites{KT,AS}. 

Interestingly, the proof of this correspondence exhibits the less prominent intermediate result that topological freeness of $(X,\Z^{d},\sigma)$ is characterized by the property that every non-zero ideal inside the transformation group C*-algebra intersects $C(X)$ non-trivially. This property is sometimes referred to as the \emph{ideal intersection property} (of $C(X)$ in $C(X)\rtimes_\alpha \Z^d$) and is actively studied for group crossed products, see for instance \cites{Sie,ST,dJST}. Additionally, building on \cite{Zel}, it has been observed that the ideal intersection property is equivalent to $C(X)$ being a maximal abelian subalgebra in the transformation group C*-algebra for amenable discrete groups, see \cite{KT}*{Theorem 4.1 and Remark 4.2}.

Doubtlessly, there is much more to say about the structure of group crossed products and we refer to \cites{BO} for an extensive and well-structured exposition. Instead, let us return to the case of a single transformation, which we now denote by $\theta$, and drop the reversibility of the system. One way of doing this in a moderate fashion is to demand that $\theta:X \longrightarrow X$ be a covering map, that is, a surjective local homeomorphism. Since we are interested in the irreversible case, we will assume that $\theta$ is not injective. Requiring that $\theta$ is a covering map has the convenient consequence that the induced map $\alpha:C(X) \longrightarrow C(X)$, given by $\alpha(f) = f \circ \theta$ is a unital, injective endomorphism. Moreover, $\theta$ is finite-to-one and the number of preimages $|\theta^{-1}(x)|$ of a singleton $x \in X$ is constant on the path-connected components of $X$. For simplicity, let us also assume throughout that this number is the same for all path-connected components of $X$. Such transformations are called regular in Definition~\ref{def:regular surj loc homeo}. Under these assumptions, there is a natural transfer operator $L$ for $\alpha$, see Example~\ref{ex:transfer operator for reg trafo}. In place of the group crossed product of $C(X)$ by $\Z$, it is reasonable to use the construction of a crossed product by an endomorphism $C(X)\rtimes_{\alpha,L}\N$ as introduced by Ruy Exel in \cite{Exe1}.

For this setup, Ruy Exel and Anatoly Vershik showed that $C(X)\rtimes_{\alpha,L}\N$ is simple if and only if $(X,\theta)$ is minimal, see \cite{EV}*{Theorem 11.3}. Their argument shows that topological freeness implies that $C(X)$ intersects every non-zero ideal in $C(X)\rtimes_{\alpha,L}\N$ non-trivially. But to the best of the author's knowledge, it was not until the work of Toke Meier Carlsen and Sergei Silvestrov that the equivalence of these two conditions was established in the irreversible setting described in the preceding paragraph, see \cite{CS}. In fact, their approach partially used results from \cite{EV} and incorporated two additional equivalent formulations. 

Now a countable family of $*$-commuting covering maps gives rise to what we will call an irreversible $*$-commutative dynamical system of finite type. Briefly speaking, we will show that the results and most ideas from \cite{CS} are extendible to this realm:

\begin{restatable*}{theorem}{TopFreeChar}\label{thm:top free char}
Suppose $(X,P,\theta)$ is an irreversible $*$-commutative dynamical system of finite type. Then the following statements are equivalent:
\begin{enumerate}
\item[$(1)$] The dynamical system $(X,P,\theta)$ is topologically free.
\item[$(2)$] Every non-zero ideal $I$ in $\CO[X,P,\theta]$ satisfies $I \cap C(X) \neq 0$.
\item[$(3)$] The representation $\varphi$ of $\CO[X,P,\theta]$ on $\ell^2(X)$ from Proposition~\ref{prop:elementary rep SIDoFT} is faithful.
\item[$(4)$] $C(X)$ is a masa in $\CO[X,P,\theta]$. 
\end{enumerate}
\end{restatable*} 

\noindent For this purpose, we first discuss some relevant features of $(X,P,\theta)$ and then construct the higher-dimensional analogue $\CO[X,P,\theta]$ of the Exel crossed product $C(X) \rtimes_{\alpha,L} \N$ for irreversible $*$-commutative dynamical systems of finite type, see Definition~\ref{def:O[X,P,theta]}. In order to employ the techniques from \cite{CS}, we also need a gauge-invariant uniqueness theorem for $\CO[X,P,\theta]$. This is achieved by realizing $\CO[X,P,\theta]$ as the Cuntz-Nica-Pimsner algebra of a product system of Hilbert bimodules over $P$ with coefficients in $C(X)$, see Theorem~\ref{thm:isom ad-hoc PS for SIDoFT}. This product system may be of independent interest for the study of $(X,P,\theta)$. As $P$ is a countably generated, free abelian monoid, we deduce that the gauge-invariant uniqueness theorem established in \cite{CLSV} holds for $\CO[X,P,\theta]$. Combining all these ingredients, we arrive at Theorem~\ref{thm:top free char}.

With this result at hands, it takes minor efforts to characterize simplicity of $\CO[X,P,\theta]$ by minimality of $(X,P,\theta)$, see Theorem~\ref{thm:simple iff minimal}. In view of the group case, this may seem a bit odd at first since topological freeness is not part of the characterization. But a modification of \cite{EV}*{Proposition 11.1} shows that minimal irreversible $*$-commutative dynamical systems of finite type are automatically topological free, see Proposition~\ref{prop:min -> top free}. 

It should be mentioned that one can derive a characterization of simplicity of $\CO[X,P,\theta]$ by considering the transformation groupoid associated to $(X,P,\theta)$. This has been accomplished in greater generality by Jonathan H. Brown, Lisa Orloff Clark, Cynthia Farthing and Aidan Sims, see \cite{BOFS}*{Theorem 5.1 and Corollary 7.8}. Moreover, one can deduce the equivalence of (1) and (2) out of \cite{BOFS}*{Proposition 5.5 and Proposition 7.5}. Nevertheless, the methods used here differ substantially from the ones in \cite{BOFS} and provide an account that is formulated entirely in the language of topological dynamical systems. Furthermore, the part involving conditions $(3)$ and $(4)$ is not covered by \cite{BOFS}.\vspace*{3mm}

\noindent However, this paper also admits a quite different perspective: The terminology of irreversible $*$-commutative dynamical systems of finite type $(X,P,\theta)$ hints at a close connection to irreversible algebraic dynamical systems $(G,P,\theta)$ as introduced in \cite{Sta1}. Recall that $(G,P,\theta)$ is given by a countable discrete group $G$ and a countable family of commuting, mutually independent, injective group endomorphisms of $G$, giving rise to an action $\theta$ of the monoid $P$ they generate. The two notions of dynamical systems are intended to capture different aspects of the motivating example $\times p,\times q:\IT \longrightarrow \IT$ for relatively prime $p,q \in \Z^\times$: While the algebraic dynamical system uses group theoretic properties, as for instance independence of group endomorphisms, the topological dynamical system considered in this paper relies on features like $*$-commutativity of covering maps. 

Nevertheless, we show that the notions of $*$-commutativity and (strong) independence are closely related, whenever both make sense, see Proposition~\ref{prop: eq star ind} for details. Clearly, this is the case if $G$ is commutative. Under this assumption, we prove that the dual model $(\hat{G},P,\hat{\theta})$ is an irreversible $*$-commutative dynamical systems of finite type if and only if $(G,P,\theta)$ is of finite type. 
We use this to find new examples for irreversible algebraic dynamical systems resembling the Ledrappier shift, see Example~\ref{ex:Ledrappier shift} and Example~\ref{ex:cellular automata}. 

In analogy to the construction of the C*-algebra $\CO[G,P,\theta]$ in \cite{Sta1}, we associate a C*-algebra $\CO[X,P,\theta]$ to each irreversible $*$-commutative dynamical system $(X,P,\theta)$ in Section~2. In fact, the construction is consistent in the sense that $\CO[\hat{G},P,\hat{\theta}] \cong \CO[G,P,\theta]$ holds for all commutative irreversible algebraic dynamical systems of finite type $(G,P,\theta)$, see Proposition~\ref{prop:consistent constr for CIADoFT}. As a corollary to this result and Theorem~\ref{thm:simple iff minimal}, we obtain that the criterion for simplicity of the C*-algebra $\CO[G,P,\theta]$ given in \cite{Sta1} is in fact necessary if $(G,P,\theta)$ is commutative and of finite type:  $\CO[G,P,\theta]$ is simple if and only if $(G,P,\theta)$ is minimal in the sense that $\bigcap_{p \in P}\theta_p(G) = \{1_G\}$, see Corollary~\ref{cor:char min-simple for CIADoFT}.\vspace*{3mm}

\noindent \textit{Acknowledgements.} This paper constitutes one part of the author's doctoral thesis completed at Westf\"alische Wilhelms-Universit\"at M\"unster under the supervision of Joachim Cuntz, whose support is highly appreciated.

\section{\texorpdfstring{Irreversible $*$-commutative dynamical systems}{Irreversible *-commutative dynamical systems}}
\noindent
This section is intended to familiarize the reader with the concept of $\ast$-commutativity so that we can present dynamical systems built from $*$-commuting surjective local homeomorphisms of a compact Hausdorff space that have a similar flavor as irreversible algebraic dynamical systems, see \cite{Sta1} for details. A close connection between strong independence and $\ast$-commutativity for commuting surjective group endomorphisms is established in Proposition~\ref{prop:*-comm vs ind}. In particular, this shows that the notion of $\ast$-commutativity coincides with independence for commutative irreversible algebraic dynamical systems of finite type. However, already the canonical shift action of $\N^2$ on $(\Z/2\Z)^{\N^2}$ provides an example where the two generators of the action do not $\ast$-commute but satisfy the independence condition.\vspace*{3mm}

\noindent The notion of $*$-commutativity was introduced by Victor Arzumanian and Jean Renault in $1996$ for a pair of maps $\theta_{1},\theta_{2}: X \longrightarrow X$ on an arbitrary set $X$, see \cite{AR}. For convenience, we will stick to the following equivalent formulation, see \cite{ER}*{Section 10}:

\begin{definition}
Suppose $X$ is a set and $\theta_{1},\theta_{2}: X \longrightarrow X$ are commuting maps. $\theta_{1}$ and $\theta_{2}$ are said to \textit{$\ast$-commute}, if, for every $x_{1},x_{2} \in X$ satisfying $\theta_{1}(x_{1}) = \theta_{2}(x_{2})$, there exists a unique $y \in X$ such that $x_{1} = \theta_{2}(y)$ and $x_{2} = \theta_{1}(y)$.
\end{definition}

\noindent The visualization of this property goes as follows: The maps $\theta_{1}$ and $\theta_{2}$ $\ast$-commute if and only if every diagram of the form
\[\scalebox{0.5}{\xymatrix{&\scalebox{2}{$\theta_{1}(x_{1})$}&& &&\scalebox{2}{$\theta_{1}(x_{1})$} \\ 
&&&\scalebox{2}{can be completed}\\ 
\scalebox{2}{$x_1$}\ar[uur]^{\scalebox{2}{$\theta_1$}} &&\scalebox{2}{$x_2$}\ar[uul]_{\scalebox{2}{$\theta_2$}}& \scalebox{2}{uniquely}& \scalebox{2}{$x_1$}\ar[uur]^{\scalebox{2}{$\theta_1$}} &&\scalebox{2}{$x_2$}\ar[uul]_{\scalebox{2}{$\theta_2$}}\\ 
&&&\scalebox{2}{by some $y \in X$ to}\\
&\text{\phantom{$y$}}\ar[uul]^{\scalebox{2}{$\theta_2$}}\ar[uur]_{\scalebox{2}{$\theta_1$}}&&&&\scalebox{2}{$y$}\ar[uul]^{\scalebox{2}{$\theta_2$}}\ar[uur]_{\scalebox{2}{$\theta_1$}}
}}\]

\begin{proposition}\label{prop: eq star ind}
Let $X$ be a set and $\theta_{1},\theta_{2}: X \longrightarrow X$ commuting maps. Then the following conditions are equivalent:
\begin{enumerate}[(i)]
	\item The maps $\theta_{1}$ and $\theta_{2}$ $\ast$-commute.
	\item For all $x \in X,~y_{1},y_{2} \in \theta_{1}^{-1}(x)$, $\theta_{2}(y_{1}) = \theta_{2}(y_{2})$ implies $y_{1} = y_{2}$.
	\item For all $x \in X$, the map $\theta_1:\theta_{2}^{-1}(x) \longrightarrow \theta_{2}^{-1}(\theta_{1}(x))$ is a bijection.
	\item[(iii')] For all $x \in X$, the map $\theta_2:\theta_{1}^{-1}(x) \longrightarrow \theta_{1}^{-1}(\theta_{2}(x))$ is a bijection.
\end{enumerate}
\end{proposition}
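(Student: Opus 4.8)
The plan is to establish (i) $\Leftrightarrow$ (iii), read off (i) $\Leftrightarrow$ (iii') for free from a symmetry, and then treat (ii) by a short direct argument in each direction. The symmetry I have in mind is that the condition ``$\theta_1,\theta_2$ $\ast$-commute'' is unchanged if one interchanges $\theta_1$ with $\theta_2$ and simultaneously $x_1$ with $x_2$; consequently every assertion proved about the ordered pair $(\theta_1,\theta_2)$ holds verbatim for $(\theta_2,\theta_1)$, and (iii') is literally (iii) read for $(\theta_2,\theta_1)$. As a preliminary I would check that the maps in (iii) and (iii') are well defined: from $\theta_2(z)=x$ and commutativity, $\theta_2(\theta_1(z))=\theta_1(\theta_2(z))=\theta_1(x)$, so $\theta_1$ does carry $\theta_2^{-1}(x)$ into $\theta_2^{-1}(\theta_1(x))$, and symmetrically for (iii').

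For (i) $\Rightarrow$ (iii), fix $x$ and let $v\in\theta_2^{-1}(\theta_1(x))$, i.e.\ $\theta_1(x)=\theta_2(v)$. By $\ast$-commutativity applied to the pair $(x,v)$ there is a unique $y$ with $\theta_2(y)=x$ and $\theta_1(y)=v$. Since $\theta_2(y)=x$ says exactly $y\in\theta_2^{-1}(x)$, this $y$ is the unique element of $\theta_2^{-1}(x)$ that $\theta_1$ sends to $v$; as $v$ was arbitrary, $\theta_1\colon\theta_2^{-1}(x)\to\theta_2^{-1}(\theta_1(x))$ is a bijection. For (iii) $\Rightarrow$ (i), given $x_1,x_2$ with $\theta_1(x_1)=\theta_2(x_2)$ we have $x_2\in\theta_2^{-1}(\theta_1(x_1))$; applying (iii) with $x=x_1$ provides the unique $y\in\theta_2^{-1}(x_1)$ with $\theta_1(y)=x_2$, and $y\in\theta_2^{-1}(x_1)$ gives $\theta_2(y)=x_1$, the uniqueness being precisely injectivity of the bijection. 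Hence (i) $\Leftrightarrow$ (iii), and by the symmetry above also (i) $\Leftrightarrow$ (iii').

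The implication (i) $\Rightarrow$ (ii) is immediate from the uniqueness clause: if $y_1,y_2\in\theta_1^{-1}(x)$ and $\theta_2(y_1)=\theta_2(y_2)=:w$, then $\theta_1(w)=\theta_1(\theta_2(y_1))=\theta_2(\theta_1(y_1))=\theta_2(x)$, so the pair $(w,x)$ satisfies the hypothesis of $\ast$-commutativity and both $y_1$ and $y_2$ are witnesses for it; uniqueness forces $y_1=y_2$. (The same conclusion can be read off (iii): $y_1,y_2\in\theta_2^{-1}(w)$ have the same image $x$ under the injective map $\theta_1\colon\theta_2^{-1}(w)\to\theta_2^{-1}(\theta_1(w))$.)

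The remaining implication — closing the circle, say as (ii) $\Rightarrow$ (iii) — is the part I expect to be the real obstacle. Injectivity of $\theta_1\colon\theta_2^{-1}(x)\to\theta_2^{-1}(\theta_1(x))$ drops out of (ii) exactly as in the previous paragraph, but \emph{surjectivity} does not follow from (ii) alone: one must, given $v\in\theta_2^{-1}(\theta_1(x))$, actually exhibit a $z\in\theta_2^{-1}(x)$ with $\theta_1(z)=v$, and a purely ``injectivity-type'' hypothesis such as (ii) carries no existence content. This is the step where the surjectivity of $\theta_1$ and $\theta_2$ — together with the finiteness and constancy of their fibres, which is the situation relevant throughout the paper — must be brought in: for such maps $\theta_2^{-1}(x)$ and $\theta_2^{-1}(\theta_1(x))$ are finite sets of one fixed cardinality, so the injection already produced is automatically onto. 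Once (ii) $\Rightarrow$ (iii) is in hand, the symmetry remark upgrades it to (ii) $\Rightarrow$ (iii') as well, and the cycle is complete.
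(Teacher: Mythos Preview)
Your treatment of (i) $\Leftrightarrow$ (iii) and the symmetry giving (iii') is essentially the paper's argument, spelled out with a bit more care. The divergence is over (ii): the paper simply asserts that ``(ii) is basically a reformulation of (i), so their equivalence is straightforward,'' whereas you correctly notice that (ii) encodes only the \emph{uniqueness} clause of $\ast$-commutativity and carries no existence content.

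Your worry is not a defect in your proof but a genuine gap in the proposition as stated. For arbitrary commuting self-maps of a set, (ii) does \emph{not} imply (i): take $X=\mathbb{N}$ and $\theta_1=\theta_2$ the successor map $n\mapsto n+1$. Since $\theta_1$ is injective, each $\theta_1^{-1}(x)$ is a singleton or empty, so (ii) holds vacuously; but with $x_1=x_2=0$ one has $\theta_1(x_1)=\theta_2(x_2)=1$ while no $y$ satisfies $\theta_2(y)=0$, so (i) fails. Thus the paper's one-line dismissal of (i) $\Leftrightarrow$ (ii) overlooks the existence half.

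Your proposed repair --- use that in the paper's setting the $\theta_i$ are surjective with constant finite fibres, so the injection $\theta_1\colon\theta_2^{-1}(x)\to\theta_2^{-1}(\theta_1(x))$ between sets of equal finite cardinality is automatically onto --- is exactly right for every application in the paper, and is the honest statement of what is actually true. In short: you have not failed to find an argument that the paper has; you have found a small oversight in the paper.
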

\begin{proof}
Observe that (ii) is basically a reformulation of (i), so their equivalence is straightforward. In order to see that (i) is equivalent to (iii), a diagram of the form
\[\scalebox{0.5}{\xymatrix{&\scalebox{2}{$\theta_{1}(x_{1})$} \\ 
\scalebox{2}{\phantom{l}}\\ 
\scalebox{2}{$x_1$}\ar[uur]^{\scalebox{2}{$\theta_1$}} &&\scalebox{2}{$x_2$}\ar[uul]_{\scalebox{2}{$\theta_2$}}&\scalebox{2}{\phantom{l}}\\ 
\scalebox{2}{\phantom{X}}\\
&\scalebox{2}{\phantom{$y$}}\ar[uul]^{\scalebox{2}{$\theta_2$}}\ar[uur]_{\scalebox{2}{$\theta_1$}}
}}\] 
clearly gives $x_2 \in \theta_2^{-1}(\theta_1(x_1))$. Thus, if we assume (iii), there is a unique $y \in \theta_2^{-1}(x_1)$ such that $\theta_1(y) = x_2$. In other words, the diagram can be completed uniquely. Conversely, if we assume (i), then, for every $x_2 \in \theta_2^{-1}(\theta_1(x_1))$, we get a unique $y_{x_2} \in \theta_2^{-1}(x_1)$ satisfying $\theta_1(y_{x_2}) = x_2$. Note that since $\theta_1$ and $\theta_2$ commute, we have $\theta_1(\theta_2^{-1}(x_1)) \subset \theta_2^{-1}(\theta_1(x_1))$. Hence, $x_2 \mapsto y_{x_2}$ is a bijection and, in fact, it is just the inverse map of $\theta_1$. The equivalence of (i) and (iii') follows by exchanging the role of $x_1,x_2$ and $\theta_1,\theta_2$.
\end{proof}

\noindent The following fact is probably well-known, but hard to track in the available literature, so we include a short proof based on Proposition~\ref{prop: eq star ind}.

\begin{lemma}\label{lem:star-comm under prod}
Let $X$ be a set and suppose $\theta_{1},\theta_2,\theta_3:X \longrightarrow X$ commute. $\theta_1$ $\ast$-commutes with $\theta_2\theta_3$ if and only if $\theta_1$ $\ast$-commutes with both $\theta_2$ and $\theta_3$.
\end{lemma}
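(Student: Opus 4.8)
The plan is to reduce the whole statement to the fibrewise reformulation of $\ast$-commutativity provided by Proposition~\ref{prop: eq star ind}. First note that since $\theta_{1},\theta_{2},\theta_{3}$ commute, the product $\theta_{2}\theta_{3}$ also commutes with $\theta_{1}$, so condition~(ii) of that proposition applies verbatim to each of the pairs $(\theta_{1},\theta_{2})$, $(\theta_{1},\theta_{3})$ and $(\theta_{1},\theta_{2}\theta_{3})$: for a map $\psi$ commuting with $\theta_{1}$, the statement ``$\theta_{1}$ $\ast$-commutes with $\psi$'' means precisely that $\psi$ is injective on every fibre $\theta_{1}^{-1}(x)$, $x \in X$. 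Thus the lemma becomes the purely set-theoretic assertion that $\theta_{2}\theta_{3}$ is injective on all fibres of $\theta_{1}$ if and only if both $\theta_{2}$ and $\theta_{3}$ are.

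For the ``if'' direction I would argue as follows. Fix $x \in X$ and $y_{1},y_{2} \in \theta_{1}^{-1}(x)$ with $\theta_{2}\theta_{3}(y_{1}) = \theta_{2}\theta_{3}(y_{2})$. Since $\theta_{1}\theta_{3} = \theta_{3}\theta_{1}$, the points $\theta_{3}(y_{1})$ and $\theta_{3}(y_{2})$ both lie in the fibre $\theta_{1}^{-1}(\theta_{3}(x))$ and have equal image under $\theta_{2}$; injectivity of $\theta_{2}$ on that fibre yields $\theta_{3}(y_{1}) = \theta_{3}(y_{2})$. Now $y_{1},y_{2}$ lie in the common fibre $\theta_{1}^{-1}(x)$, and injectivity of $\theta_{3}$ there forces $y_{1} = y_{2}$. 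Equivalently, one may phrase this via the bijectivity forms (iii) and (iii') by composing the bijections $\theta_{1}^{-1}(x)\xrightarrow{\theta_{3}}\theta_{1}^{-1}(\theta_{3}x)\xrightarrow{\theta_{2}}\theta_{1}^{-1}(\theta_{2}\theta_{3}x)$, but I prefer the fibrewise-injectivity phrasing because it never requires knowing these restrictions are onto.

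For the ``only if'' direction, assume $\theta_{2}\theta_{3}$ is injective on every fibre of $\theta_{1}$. That $\theta_{3}$ is injective on each such fibre is immediate, since a composite $\theta_{2}\circ\theta_{3}$ injective on a set forces $\theta_{3}$ injective on it. The point I expect to need the most care is $\theta_{2}$: one is tempted to ``cancel'' $\theta_{3}$ off $\theta_{2}\theta_{3}$ restricted to a fibre, but this fails because $\theta_{3}$ need not be surjective, so a given fibre $\theta_{1}^{-1}(w)$ need not be of the form $\theta_{3}(\theta_{1}^{-1}(z))$ for any $z$. The resolution is to traverse the square the other way: given $y_{1},y_{2} \in \theta_{1}^{-1}(x)$ with $\theta_{2}(y_{1}) = \theta_{2}(y_{2})$, apply $\theta_{3}$ and use $\theta_{3}\theta_{2} = \theta_{2}\theta_{3}$ to obtain $\theta_{2}\theta_{3}(y_{1}) = \theta_{2}\theta_{3}(y_{2})$; then injectivity of $\theta_{2}\theta_{3}$ on $\theta_{1}^{-1}(x)$ gives $y_{1} = y_{2}$ directly, with no surjectivity needed. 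Assembling the three fibrewise statements through Proposition~\ref{prop: eq star ind} then completes the proof.
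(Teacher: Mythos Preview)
Your proof is correct and follows essentially the same approach as the paper, reducing everything to Proposition~\ref{prop: eq star ind}. The only cosmetic difference is that the paper uses condition~(ii) for the ``only if'' direction (exactly as you do) but invokes the bijectivity form~(iii') for the ``if'' direction, whereas you stay with~(ii) throughout; you even mention the (iii')-based alternative yourself, so the two arguments coincide.
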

\begin{proof}
Suppose $\theta_1$ $\ast$-commutes with $\theta_2\theta_3$. We will use the equivalent characterization of $\ast$-commutativity (ii) from Proposition~\ref{prop: eq star ind}. If we have $x \in X,y_1,y_2 \in \theta_1^{-1}(x)$ such that $\theta_2(y_1) = \theta_2(y_2)$, then $\theta_2\theta_3(y_1) = \theta_2\theta_3(y_2)$ forces $y_1 = y_2$. Thus $\theta_1$ and $\theta_2$ $\ast$-commute. For $\theta_1$ and $\theta_3$, we note that the situation is symmetric in $\theta_2$ and $\theta_3$. If $\theta_1$ $\ast$-commutes with both $\theta_2$ and $\theta_3$, then $\theta_1$ $\ast$-commutes with $\theta_2\theta_3$ by the equivalence of $\ast$-commutativity and condition (iii') in Proposition~\ref{prop: eq star ind}.
\end{proof}  

\noindent Given a compact Hausdorff space $X$, a first step away from reversibility is to consider local homeomorphisms instead of homeomorphisms. Let us recall that if $\theta:X \longrightarrow X$ is a local homeomorphism, then $|\theta^{-1}(x)|$ is finite for all $x \in X$. Indeed, the collection of all open subsets $U$ of $X$ on which $\theta$ is injective constitutes an open cover of $X$. By compactness of $X$, this can be reduced to a finite number which bounds $|\theta^{-1}(x)|$. 

We will be interested in surjective local homeomorphisms $\theta:X \longrightarrow X$ for which the cardinality of the preimage of a point is constant on $X$. Such transformations will be called regular. They also appear in \cite{CS} under the name \emph{covering map}. 

\begin{definition}\label{def:regular surj loc homeo}
Let $X$ be a compact Hausdorff space. A surjective local homeomorphism $\theta:X \longrightarrow X$ is said to be \textit{regular}, if $|\theta^{-1}(x)| = |\theta^{-1}(y)|$ holds for all $x,y \in X$.
\end{definition}

\noindent Via $f \mapsto f \circ \theta$, such a transformation yields an injective $\ast$-homomorphism $\alpha$ of $C(X)$ which has a left-inverse in the monoid formed by the positive linear maps $X \longrightarrow X$ with composition. This map can be defined abstractly on the C*-algebraic level:

\begin{definition}\label{def:transfer operator}
Given a C*-algebra $A$ and a $\ast$-endomorphism $\alpha$ of $A$, a positive linear map $L:A \longrightarrow A$ is called a \textit{transfer operator} for $\alpha$, if it satisfies $L(\alpha(a)b) = aL(b)$ for all $a,b \in A$. If $A$ is unital, $L$ is said to be \textit{normalized} provided that $L(1) = 1$.
\end{definition}

\begin{example}\label{ex:transfer operator for reg trafo}
If $X$ is a compact Hausdorff space and $\theta:X \longrightarrow X$ is a regular surjective local homeomorphism with $N_\theta := |\theta^{-1}(x)|$, where $x \in X$ is arbitrary, then 
\[\begin{array}{c} L(f)(x) := \frac{1}{N_\theta}\sum\limits_{y \in \theta^{-1}(x)}f(y) \end{array}\]
defines a transfer operator for the injective $\ast$-homomorphism $\alpha$ of $C(X)$ given by $f \mapsto f \circ \theta$. Indeed, $L$ is a positive linear map and, for $f,g \in C(X)$ and $x \in X$, we have
\[\begin{array}{c}L(\alpha(f)g)(x) = \frac{1}{N_\theta}\sum\limits_{y \in \theta^{-1}(x)}f(\theta(y))g(y) = (fL(g))(x). \end{array}\]
\end{example}

\begin{example}\label{ex:transfer operator disc ab fin type}
Let $G$ be a discrete abelian group and $\theta$ an injective group endomorphism of $G$ with $[G:\theta(G)] < \infty$. Then $\hat{\theta}$ is a local homeomorphism of $\hat{G}$. Moreover, $\hat{\theta}$ is surjective and every $k \in \hat{G}$ has $|\ker\hat{\theta}| = [G:\theta(G)]$ preimages under $\hat{\theta}$. Thus, $\hat{\theta}$ is regular. If $L$ is the transfer operator for $\hat{\theta}$ as in Example~\ref{ex:transfer operator for reg trafo} and $(u_g)_{g \in G}$ denote the standard generators of $C^*(G)$ , then 
\[L(u_g) = \chi_{\theta(G)}(g) u_{\theta^{-1}(g)}\]
holds for all $g \in G$. Indeed, if $g \in \theta(G)$, then $L(u_g) = L (\alpha(u_{\theta^{-1}(g)})) = u_{\theta^{-1}(g)}$, where $\alpha$ denotes the endomorphism $u_g \mapsto u_{\theta(g)}$. For the case $g \notin \theta(G)$, let $k \in \hat{G}$ and note that $\hat{\theta}^{-1}(k) = \ell_0 \ker\hat{\theta}$ holds for every $\ell_0 \in \hat{\theta}^{-1}(k)$. Hence, we get 
\[\begin{array}{c} L(u_g)(k) = \frac{1}{|\ker\hat{\theta}|} \sum\limits_{\ell \in \hat{\theta}^{-1}(k)} u_g(\ell) = \frac{1}{|\ker\hat{\theta}|} u_g(\ell_0)\sum\limits_{\ell \in \ker\hat{\theta}} u_g(\ell) = 0, \end{array}\]
since the sum over a finite, nontrivial subgroup of $\IT$ vanishes.   
\end{example}

\noindent It is well-known that for each normalized transfer operator $L$ for a unital $*$-endomorphism $\alpha$ of a unital C*-algebra $A$, the map $E:= \alpha \circ L$ is a conditional expectation from $A$ onto $\alpha(A)$. The next lemma is closely related to \cite{EV}*{Proposition 8.6}, so we omit its straightforward proof :

\begin{lemma}\label{lem:loc homeo right reconstruction}
Let $\theta:X \longrightarrow X$ be a regular surjective local homeomorphism of a compact Hausdorff space $X$ with $N_\theta := |\theta^{-1}(x)|$, where $x \in X$ is arbitrary. Denote by $L$ the natural transfer operator for the induced injective endomorphism $\alpha$ of $C(X)$. Then there exists a finite, open cover $\CU = (U_{i})_{1 \leq i \leq n}$ of $X$ such that the restriction of $\theta$ to each $U_{i}$ is injective. If $(v_{i})_{1 \leq i \leq n}$ is a partition of unity for $X$ subordinate to $\CU$, then $\nu_{i} := (N_\theta v_{i})^{\frac{1}{2}}$ satisfies 
\[\begin{array}{c} \sum\limits_{1 \leq i \leq n}{\nu_{i}\alpha \circ L(\nu_{i}f)} = f \text{ for all } f \in C(X). \end{array}\]
\end{lemma}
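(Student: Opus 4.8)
The plan splits into the easy existence claim and a direct pointwise computation.

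\emph{The cover.} First I would observe that, since $\theta$ is a local homeomorphism, every point of $X$ has an open neighbourhood on which $\theta$ is injective; by compactness of $X$ these can be thinned to a finite open cover $\CU=(U_i)_{1\le i\le n}$ with $\theta|_{U_i}$ injective for each $i$. As $X$ is compact Hausdorff, hence normal, there is a partition of unity $(v_i)_{1\le i\le n}$ in $C(X)$ subordinate to $\CU$, i.e. $0\le v_i\le 1$, $\sum_i v_i=1$, and $\operatorname{supp} v_i\subseteq U_i$. Since $v_i\ge 0$, the element $\nu_i:=(N_\theta v_i)^{1/2}$ lies in $C(X)$, has $\operatorname{supp}\nu_i=\operatorname{supp} v_i\subseteq U_i$, and satisfies $\nu_i^2=N_\theta v_i$.

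\emph{The identity.} Fix $f\in C(X)$ and $x\in X$. Unravelling the definitions of $\alpha$ and $L$ from Example~\ref{ex:transfer operator for reg trafo},
\[
\nu_i(x)\,(\alpha\circ L)(\nu_i f)(x)=\nu_i(x)\,L(\nu_i f)(\theta(x))=\frac{1}{N_\theta}\,\nu_i(x)\!\!\sum_{y\in\theta^{-1}(\theta(x))}\!\!\nu_i(y)f(y).
\]
Summing over $i$ and interchanging the two finite sums yields
\[
\sum_{1\le i\le n}\nu_i(x)\,(\alpha\circ L)(\nu_i f)(x)=\frac{1}{N_\theta}\sum_{y\in\theta^{-1}(\theta(x))}f(y)\sum_{1\le i\le n}\nu_i(x)\nu_i(y).
\]
The one point that needs care is the following: if $\nu_i(x)\nu_i(y)\neq 0$, then $x,y\in\operatorname{supp}\nu_i\subseteq U_i$; since in addition $\theta(y)=\theta(x)$ and $\theta|_{U_i}$ is injective, this forces $y=x$. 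Hence only the diagonal term $y=x$ survives in the inner sum, and we are left with
\[
\sum_{1\le i\le n}\nu_i(x)\,(\alpha\circ L)(\nu_i f)(x)=\frac{1}{N_\theta}\,f(x)\sum_{1\le i\le n}\nu_i(x)^2=\frac{1}{N_\theta}\,f(x)\sum_{1\le i\le n}N_\theta v_i(x)=f(x),
\]
using $\nu_i^2=N_\theta v_i$ and $\sum_i v_i\equiv 1$. As $x\in X$ was arbitrary, $\sum_i\nu_i\,\alpha\circ L(\nu_i f)=f$, which is the assertion.

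\emph{Where the difficulty is.} There is essentially no obstacle here: the whole argument is bookkeeping with the explicit formula for $L$, and the only substantive step is the support-plus-injectivity observation that collapses the double sum to its diagonal. It is perhaps worth remarking that this exhibits $\{\nu_i\}_{1\le i\le n}$ as a finite quasi-basis (a ``Parseval frame'') for the conditional expectation $\alpha\circ L$ onto $\alpha(C(X))$, which is precisely the link to \cite{EV}*{Proposition 8.6}.
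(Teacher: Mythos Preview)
Your argument is correct and is exactly the straightforward pointwise computation the paper has in mind; in fact the paper omits the proof entirely, referring instead to \cite{EV}*{Proposition 8.6}. The support-plus-injectivity step you single out is indeed the only nontrivial point, and your remark about $\{\nu_i\}$ forming a finite quasi-basis for $\alpha\circ L$ captures precisely the connection to \cite{EV}.
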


\noindent The equation proved in Lemma~\ref{lem:loc homeo right reconstruction} can be interpreted as a reconstruction formula. The conclusion of this result will be very important for the next two sections. Before we return to $\ast$-commuting maps, we add another small observation which is of independent interest. 

\begin{lemma}\label{lem:refine cover and pou}
Let $\theta_1,\theta_2:X \longrightarrow X$ be commuting continuous maps of a compact Hausdorff space $X$. Assume that there are two finite open covers $\CU_1 = (U_{1,i})_{i \in I_1}$ and $\CU_2 = (U_{2,i})_{i \in I_2}$ of $X$ such that $\theta_1|_{U_{1,i}}$ is injective for all $i \in I_1$ and $\theta_2|_{U_{2,i}}$ is injective for all $i \in I_2$. Then 
\[\CU_1 \vee_{\theta_1} \CU_2 := \left\{~ U_{1,i_1} \cap \theta_1^{-1}(U_{2,i_2}) ~|~ i_1 \in I_1,~i_2 \in I_2 ~\right\}\]
is a finite open cover of $X$ such that the restriction of $\theta_1\theta_2$ to every element of $\CU_1 \vee_{\theta_{1}} \CU_2$ is injective. Furthermore, suppose $(v_{1,i})_{i \in I_1}$ and $(v_{2,i})_{i \in I_2}$ are partitions of unity for $X$ subordinate to $\CU_1$ and $\CU_2$, respectively. If $\alpha_1$ denotes the endomorphism of $C(X)$ given by $f \mapsto f \circ \theta_1$, then $(v_{i_1,i_2})_{i_1 \in I_1,i_2 \in I_2}$, where $v_{i_1,i_2} := v_{1,i_1} \alpha_1(v_{2,i_2})$ defines a partition of unity subordinate to $\CU_1 \vee_{\theta_1} \CU_2$.
\end{lemma}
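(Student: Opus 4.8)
The plan is to verify directly the defining properties, letting commutativity of $\theta_1,\theta_2$ enter only through the identity $\theta_1\theta_2=\theta_2\theta_1$ and unitality of $\alpha_1$ enter only for the normalization of the partition of unity.

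For the first assertion: each set $U_{1,i_1}\cap\theta_1^{-1}(U_{2,i_2})$ is open because $\theta_1$ is continuous, and there are only finitely many of them. To see that they cover $X$, fix $x\in X$, pick $i_1\in I_1$ with $x\in U_{1,i_1}$ since $\CU_1$ is a cover, then pick $i_2\in I_2$ with $\theta_1(x)\in U_{2,i_2}$ since $\CU_2$ is a cover; then $x\in U_{1,i_1}\cap\theta_1^{-1}(U_{2,i_2})$. For injectivity of $\theta_1\theta_2$ on $V:=U_{1,i_1}\cap\theta_1^{-1}(U_{2,i_2})$, suppose $x,x'\in V$ with $\theta_1\theta_2(x)=\theta_1\theta_2(x')$. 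Rewriting this as $\theta_2(\theta_1(x))=\theta_2(\theta_1(x'))$ and using that $\theta_1(x),\theta_1(x')\in U_{2,i_2}$ with $\theta_2|_{U_{2,i_2}}$ injective gives $\theta_1(x)=\theta_1(x')$; since $x,x'\in U_{1,i_1}$ and $\theta_1|_{U_{1,i_1}}$ is injective, we conclude $x=x'$.

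For the second assertion I would check positivity, normalization, and the support condition for $v_{i_1,i_2}=v_{1,i_1}\alpha_1(v_{2,i_2})$. Positivity is clear since $\alpha_1$ is a $\ast$-homomorphism, hence positive. Normalization follows from
\[
\sum_{i_1\in I_1,\,i_2\in I_2} v_{1,i_1}\alpha_1(v_{2,i_2}) \;=\; \sum_{i_1\in I_1} v_{1,i_1}\,\alpha_1\Bigl(\sum_{i_2\in I_2} v_{2,i_2}\Bigr) \;=\; \sum_{i_1\in I_1} v_{1,i_1} \;=\; 1 ,
\]
using linearity and unitality of $\alpha_1$. For the support condition, I would use $\supp\bigl(v_{1,i_1}\alpha_1(v_{2,i_2})\bigr)\subseteq \supp v_{1,i_1}\cap \supp\bigl(\alpha_1(v_{2,i_2})\bigr)\subseteq U_{1,i_1}\cap\supp(v_{2,i_2}\circ\theta_1)$, together with the elementary inclusion $\supp(v_{2,i_2}\circ\theta_1)\subseteq\theta_1^{-1}(\supp v_{2,i_2})$: the set $\theta_1^{-1}(\supp v_{2,i_2})$ is closed and contains $\{x: v_{2,i_2}(\theta_1(x))\neq 0\}=\theta_1^{-1}(\{v_{2,i_2}\neq 0\})$, hence contains its closure. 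Since $\supp v_{2,i_2}\subseteq U_{2,i_2}$, this yields $\supp v_{i_1,i_2}\subseteq U_{1,i_1}\cap\theta_1^{-1}(U_{2,i_2})$, as required; the local finiteness demanded of a partition of unity is automatic as the family is finite.

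I do not expect a genuine obstacle here; the only mildly technical point is the behavior of supports under precomposition with $\theta_1$, i.e.\ the inclusion $\supp(f\circ\theta_1)\subseteq\theta_1^{-1}(\supp f)$, which is a routine closure-and-continuity argument. One should simply keep in mind that some of the functions $v_{i_1,i_2}$ may vanish identically (when $U_{1,i_1}\cap\theta_1^{-1}(U_{2,i_2})$ is empty or small), which affects none of the claimed properties.
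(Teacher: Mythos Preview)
Your proof is correct and follows essentially the same route as the paper's: a direct verification that the refined cover is open and covers $X$, injectivity of $\theta_1\theta_2$ via the factorization through $\theta_1$ and $\theta_2$ separately, and a straightforward check of the partition-of-unity axioms. If anything, your treatment of the support inclusion $\supp(v_{2,i_2}\circ\theta_1)\subseteq\theta_1^{-1}(\supp v_{2,i_2})$ is more careful than the paper's, which asserts the corresponding equality without justification.
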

\begin{proof}
First of all, $\CU_1 \vee_{\theta_1} \CU_2$ consists of open sets by continuity of $\theta_1$ and it is clear that these sets cover $X$. If we let $U' := U_{1,i_1} \cap \theta_1^{-1}(U_{2,i_2})$, we get a commutative diagram:
\[\xymatrix{U' \ar[rr]^{\theta_1\theta_2}\ar[dr]_{\theta_1} &&X\\
&U_{2,i_2} \ar[ur]_{\theta_2}
}\] 
As $\theta_1$ is injective on $U_{1,i_1}$ and $\theta_2$ is injective on $U_{2,i_2}$, it follows that $\theta_1\theta_2$ is injective on $U_{1,i_1} \cap \theta_1^{-1}(U_{2,i_2})$ for all $i_1,i_2$. For the second part, we observe that  
\[\begin{array}{c} \sum\limits_{\substack{i_1 \in I_1 \\ i_2 \in I_2}}v_{i_1,i_2}(x) = \underbrace{\sum\limits_{i_1 \in I_1}v_{1,i_1}(x)}_{= 1} \underbrace{\sum\limits_{i_2 \in I_2}v_{2,i_2}(\theta_1(x))}_{= 1} = 1 \end{array}\]
holds for all $x \in X$ and 
\[\supp v_{i_1,i_2} = \supp v_{1,i_1} \cap \theta_1^{-1}(\supp v_{2,i_2}) \subset U_{1,i_1} \cap \theta_1^{-1}(U_{2,i_2}).\] 
\end{proof}



\noindent In particular, Lemma~\ref{lem:refine cover and pou} applies to commuting regular surjective local homeomorphisms by Lemma~\ref{lem:loc homeo right reconstruction}. The idea is to think of $\CU_1 \vee_{\theta_1} \CU_2$ as a common refinement of $\CU_1$ and $\CU_2$ with respect to $\theta_1$. But this construction is clearly not symmetric in $\theta_1$ and $\theta_2$.

The next proposition will be useful for the proof of Theorem~\ref{thm:isom ad-hoc PS for SIDoFT}.

\begin{proposition}\label{prop:star-com gives comm endo and transfer op}
Suppose $X$ is a compact Hausdorff space and $\theta_1,\theta_2: X \longrightarrow X$ are regular surjective local homeomorphisms. Let $\alpha_i$ denote the endomorphism of $C(X)$ induced by $\theta_i$ and be $L_1$ the natural transfer operator for $\alpha_1$ as constructed in Example~\ref{ex:transfer operator for reg trafo}. Then $\theta_1$ and $\theta_2$ $*$-commute if and only if $L_1$ and $\alpha_2$ commute.
\end{proposition}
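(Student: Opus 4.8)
The plan is to reduce the operator identity $L_1\alpha_2 = \alpha_2 L_1$ to a combinatorial statement about $\theta_1,\theta_2$ and then recognise that statement in Proposition~\ref{prop: eq star ind}. First I would record the explicit formulas: for $f \in C(X)$ and $x \in X$, Example~\ref{ex:transfer operator for reg trafo} gives
\[
(L_1\alpha_2 f)(x) = \frac{1}{N_{\theta_1}}\sum_{y \in \theta_1^{-1}(x)} f(\theta_2(y)), \qquad (\alpha_2 L_1 f)(x) = \frac{1}{N_{\theta_1}}\sum_{z \in \theta_1^{-1}(\theta_2(x))} f(z),
\]
where $N_{\theta_1} = |\theta_1^{-1}(x)|$; note that only continuity of $\theta_2$ and regularity of $\theta_1$ enter here, no commutativity. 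Hence $L_1\alpha_2 = \alpha_2 L_1$ is equivalent to the equality of finitely supported measures $\sum_{y \in \theta_1^{-1}(x)}\delta_{\theta_2(y)} = \sum_{z \in \theta_1^{-1}(\theta_2(x))}\delta_z$ for every $x \in X$.

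For the direction ``$\theta_1$ and $\theta_2$ $\ast$-commute $\Rightarrow$ $L_1\alpha_2 = \alpha_2 L_1$'', I would invoke the equivalence of conditions (i) and (iii') in Proposition~\ref{prop: eq star ind}: $\theta_2$ restricts to a bijection $\theta_1^{-1}(x) \to \theta_1^{-1}(\theta_2(x))$ for every $x$, so the first sum above is merely a reindexing of the second, and the two formulas agree. For the converse, assume the measure identity holds. Since $X$ is compact Hausdorff, continuous functions separate the finitely many points of $\theta_2(\theta_1^{-1}(x)) \cup \theta_1^{-1}(\theta_2(x))$, so testing the identity on Urysohn bump functions shows simultaneously that $\theta_2(\theta_1^{-1}(x)) = \theta_1^{-1}(\theta_2(x))$ and that each $z \in \theta_1^{-1}(\theta_2(x))$ has exactly one preimage in $\theta_1^{-1}(x)$ under $\theta_2$; thus $\theta_2|_{\theta_1^{-1}(x)}$ is injective, and as $|\theta_1^{-1}(x)| = |\theta_1^{-1}(\theta_2(x))| = N_{\theta_1}$ by regularity of $\theta_1$, it is in fact a bijection onto $\theta_1^{-1}(\theta_2(x))$. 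Applying the set equality with $x = \theta_1(y)$ gives $\theta_1\theta_2(y) = \theta_2\theta_1(y)$ for all $y$, so $\theta_1$ and $\theta_2$ commute, and Proposition~\ref{prop: eq star ind} (now in the direction (iii') $\Rightarrow$ (i)) yields that they $\ast$-commute.

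The only place demanding a little care is the converse: one has to squeeze out of the single measure identity both the set equality $\theta_2(\theta_1^{-1}(x)) = \theta_1^{-1}(\theta_2(x))$ and the ``multiplicity one'' statement, and then use regularity of $\theta_1$ to promote injectivity of $\theta_2$ on the fibre to bijectivity; that commutativity of $\theta_1,\theta_2$ is not assumed but drops out of the argument is a bonus rather than a difficulty. Everything else is routine manipulation of the defining formulas.
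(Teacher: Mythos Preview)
Your proof is correct, and the forward direction matches the paper's: both invoke condition (iii') of Proposition~\ref{prop: eq star ind} to reindex the sum. For the converse the paper argues by contraposition---if $*$-commutativity fails, regularity of $\theta_1$ yields some $x$ with $\theta_2(\theta_1^{-1}(x)) \subsetneq \theta_1^{-1}(\theta_2(x))$, and one then tests $L_1\alpha_2$ against $\alpha_2 L_1$ on a single function---whereas you argue directly, extracting both the set equality and the multiplicity-one statement from the measure identity via Urysohn bumps. Your route is slightly longer but more self-contained: you do not assume that $\theta_1$ and $\theta_2$ commute (the paper does so tacitly, both through its definition of $*$-commuting and in writing the containment $\theta_2(\theta_1^{-1}(x)) \subset \theta_1^{-1}(\theta_2(x))$), and you recover commutativity as a byproduct. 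Incidentally, the paper's printed test function in the converse step is the constant $1$, which does not actually detect the discrepancy since $L_1(1)=\alpha_2(1)=1$; a bump supported near a point of $\theta_1^{-1}(\theta_2(x)) \setminus \theta_2(\theta_1^{-1}(x))$ is what is really needed there, and your argument handles this cleanly.
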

\begin{proof}
Assume that $\theta_1$ and $\theta_2$ $*$-commute. Using (iii') from Proposition~\ref{prop: eq star ind}, this is a straightforward computation. For $f \in C(X)$ and $x \in X$, we get
\[\begin{array}{c}
L_1(\alpha_2(f))(x) = \frac{1}{N_1} \sum\limits_{z \in \theta_2(\theta_1^{-1}(x))}f(z) = \frac{1}{N_1} \sum\limits_{z \in \theta_1^{-1}(\theta_2(x))}f(z) = \alpha_2(L_1(f))(x).
\end{array}\] 
If $\theta_1$ and $\theta_2$ do not $*$-commute, there is $x \in X$ such that $\theta_2(\theta_1^{-1}(x))$ is a proper subset of $\theta_1^{-1}(\theta_2(x))$ because $\theta_1$ is regular. Thus we conclude $\alpha_2(L_1(1))(x) \neq L_1(\alpha_2(1))(x)$, so $L_1$ and $\alpha_2$ do not commute.
\end{proof}

\noindent Next, we will define the analogue of an irreversible algebraic dynamical system of finite type based on $\ast$-commuting regular transformations of a compact Hausdorff space $X$, compare \cite{Sta1}. As $X$ is compact, we cannot get anything beyond the finite type case here. We note that more general dynamical systems of this type have been considered in \cite{FPW}, where $X$ is allowed to be locally compact. In their approach, regularity is relaxed to the requirement that there is a uniform finite bound on the number of preimages of a single point, see \cite{FPW}*{Definition 3.2}.

\begin{restatable}{definition}{SID}\label{def:SID} 
An \textit{irreversible $*$-commutative dynamical system of finite type} is a triple $(X,P,\theta)$ consisting of 
\begin{enumerate}[(A)]
\item a compact Hausdorff space $X$,
\item a countably generated free abelian monoid $P$ with unit $1_P$ and 
\item an action $P \stackrel{\theta}{\curvearrowright} X$ by regular surjective local homeomorphisms with the following property: $\theta_p$ and $\theta_q$ $\ast$-commute if and only if $p$ and $q$ are relatively prime in $P$. 
\end{enumerate}
\end{restatable}

\noindent Before considering examples, let us recall the notions of (strong) independence for surjective group endomorphisms introduced in \cite{Sta1}.

\begin{definition}\label{def:ind surj}
Two commuting, surjective group endomorphisms $\theta_{1}$ and $\theta_2$ of a group $K$ are said to be \textit{strongly independent}, if they satisfy $\ker\theta_{1} \cap \ker\theta_{2} = \{1_K\}$. $\theta_{1}$ and $\theta_2$ are called \textit{independent}, if $\ker\theta_1 \cdot \ker\theta_2 = \ker\theta_1\theta_2$ holds. 
\end{definition}

\noindent We will see that independence is directly connected to $*$-commutativity in the case of surjective group endomorphisms. In fact, independence turns out to be weaker in principle, but the two conditions are equivalent if the kernel of one of the surjective group endomorphisms is a co-Hopfian group. Co-Hopfian groups have first been studied under the name S-groups in \cite{Bae} and we refer to \cites{GG,ER2} as well as \cite{dlH}*{Section 22 of Chapter III} for more information on the subject.

\begin{definition}\label{def:co-Hopfian gp}
A group $K$ is said to be co-Hopfian if every injective group endomorphism $\theta:K \into K$ is already an automorphism of $K$.
\end{definition}


\begin{proposition}\label{prop:*-comm vs ind}
Suppose $K$ is a group and $\theta_1,\theta_2$ are commuting surjective endomorphisms of $K$. If $\theta_1$ and $\theta_2$ $\ast$-commute, then $\theta_1$ and $\theta_2$ are strongly independent. If $\theta_2:\ker\theta_1 \longrightarrow \ker\theta_1$ or $\theta_1:\ker\theta_2 \longrightarrow \ker\theta_2$ is surjective, then the converse holds as well. In particular, this is the case if $\ker\theta_1$ or $\ker\theta_2$ is co-Hopfian.
\end{proposition}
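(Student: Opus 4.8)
The plan is to work entirely with the characterization (ii) from Proposition~\ref{prop: eq star ind}, which in the group setting becomes a statement about kernels. First I would unravel what condition (ii) says for commuting surjective group endomorphisms $\theta_1,\theta_2$ of $K$. Fix $x \in K$ and $y_1,y_2 \in \theta_1^{-1}(x)$; then $y_2 y_1^{-1} \in \ker\theta_1$, and $\theta_2(y_1) = \theta_2(y_2)$ says exactly $y_2 y_1^{-1} \in \ker\theta_2$. So (ii) asserts: whenever $x$ has a preimage $y_1$ under $\theta_1$, the elements of $\ker\theta_1 \cap \ker\theta_2$ act trivially on that fiber — but since $\theta_1$ is surjective, every $x$ has a preimage, and translating by an arbitrary element of $\ker\theta_1\cap\ker\theta_2$ shows (ii) holds if and only if $\ker\theta_1\cap\ker\theta_2 = \{1_K\}$. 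This gives the first implication: $\ast$-commutativity implies strong independence. (One should be slightly careful: (ii) as literally stated quantifies over $x$ in the image, but surjectivity makes this vacuous to worry about.)

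For the converse under the extra hypothesis, suppose $\ker\theta_1\cap\ker\theta_2=\{1_K\}$ and, say, $\theta_2(\ker\theta_1)=\ker\theta_1$, i.e.\ $\theta_2$ restricts to a surjection of $\ker\theta_1$ onto itself. I want to verify characterization (iii) from Proposition~\ref{prop: eq star ind}: for each $x$, the map $\theta_1:\theta_2^{-1}(x)\to\theta_2^{-1}(\theta_1(x))$ is a bijection. Injectivity on $\theta_2^{-1}(x)$: if $z_1,z_2\in\theta_2^{-1}(x)$ and $\theta_1(z_1)=\theta_1(z_2)$, then $z_2z_1^{-1}\in\ker\theta_1\cap\ker\theta_2=\{1_K\}$, so $z_1=z_2$ — this only uses strong independence. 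Surjectivity is where the co-Hopfian-type hypothesis enters: given $w\in\theta_2^{-1}(\theta_1(x))$, I need $z\in\theta_2^{-1}(x)$ with $\theta_1(z)=w$. Since $\theta_1$ is surjective, pick any $z_0$ with $\theta_1(z_0)=w$; then $\theta_1(\theta_2(z_0)) = \theta_2(\theta_1(z_0)) = \theta_2(w) = \theta_1(x)$ (using commutativity and $\theta_2(w)=\theta_1(x)$), so $\theta_2(z_0)x^{-1}\in\ker\theta_1$. By the surjectivity of $\theta_2$ on $\ker\theta_1$, write $\theta_2(z_0)x^{-1} = \theta_2(k)$ for some $k\in\ker\theta_1$, and set $z := k^{-1}z_0$. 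Then $\theta_2(z) = \theta_2(k)^{-1}\theta_2(z_0) = x$ and $\theta_1(z) = \theta_1(z_0) = w$, as desired. Thus $\theta_1$ and $\theta_2$ $\ast$-commute by the equivalence of (i) and (iii); the symmetric hypothesis $\theta_1(\ker\theta_2)=\ker\theta_2$ is handled identically using (iii').

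Finally, for the ``in particular'' clause: if $\ker\theta_1$ is co-Hopfian, I need $\theta_2|_{\ker\theta_1}$ to be surjective onto $\ker\theta_1$. Since $\theta_1$ and $\theta_2$ commute, $\theta_2(\ker\theta_1)\subseteq\ker\theta_1$, so $\theta_2$ restricts to an endomorphism of $\ker\theta_1$; strong independence forces this restriction to have trivial kernel, hence to be injective; and a co-Hopfian group's injective endomorphisms are automorphisms, so the restriction is onto $\ker\theta_1$. Then the previous paragraph applies (and likewise if $\ker\theta_2$ is co-Hopfian). I expect the main obstacle to be bookkeeping rather than conceptual: keeping straight which of strong independence versus the surjectivity-on-the-kernel hypothesis is actually used at each point, and making sure the fiber-translation argument for the first implication correctly accounts for surjectivity of the $\theta_i$ so that condition (ii) genuinely captures $\ker\theta_1\cap\ker\theta_2=\{1_K\}$ rather than something weaker.
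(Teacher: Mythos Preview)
Your proof is correct and follows essentially the same approach as the paper's. Both arguments reduce $\ast$-commutativity to the behavior of $\theta_i$ restricted to $\ker\theta_j$ via the characterizations in Proposition~\ref{prop: eq star ind}: the paper packages this as a single intermediate equivalence ($\ast$-commutativity $\Leftrightarrow$ $\theta_1$ is an automorphism of $\ker\theta_2$) and reads off all three claims from it, whereas you handle the forward implication via condition (ii) and the converse via an explicit fiber-by-fiber verification of condition (iii), but the underlying content is the same.
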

\begin{proof}
Note that we have $\theta_i^{-1}(k) = k' \ker\theta_i$ for all $k \in K$ where $k' \in \theta_i^{-1}(k)$ is chosen arbitrarily. According to Proposition~\ref{prop: eq star ind}, $\theta_1$ and $\theta_2$ $\ast$-commute precisely if 
$\theta_1:\theta_2^{-1}(k) \longrightarrow \theta_1(\theta_2^{-1}(k))$ is bijective for all $k \in K$. Since $\theta_1$ and $\theta_2$ are group endomorphisms, this is equivalent to the requirement that $\theta_1$ is an automorphism of the subgroup $\ker\theta_2$. Indeed, this is clearly necessary and if it is true, then $\theta_1:\theta_2^{-1}(k) \longrightarrow \theta_1(\theta_2^{-1}(k))$ is a bijection because $\theta_2^{-1}(\theta_1(k)) = \theta_1(k')\ker\theta_2$ and $\theta_1(\theta_2^{-1}(k)) = \theta_1(k')\theta_1(\ker\theta_2)$. In particular, we have $\ker\theta_1\cap\ker\theta_2 = \{1_K\}$, so $\theta_1$ and $\theta_2$ are strongly independent in the sense of Definition~\ref{def:ind surj}. Moreover, we see that strong independence corresponds to injectivity of $\theta_1$ and $\theta_2$ on $\ker\theta_2$ and $\ker\theta_1$, respectively. Hence, if one of these maps is surjective, we get $\ast$-commutativity of $\theta_1$ and $\theta_2$. By definition, this is for granted if one knows that one of the kernels is a co-Hopfian group.
\end{proof}

\noindent Recall from \cite{Sta1} that an irreversible algebraic dynamical system $(G,P,\theta)$ consists of 
\begin{enumerate}[(A)]
\item a countably infinite discrete group $G$ with unit $1_G$,
\item a countably generated free abelian monoid $P$ with unit $1_P$, and 
\item an action $P \stackrel{\theta}{\curvearrowright} G$ by injective group endomorphisms with the property that $\theta_p$ and $\theta_q$ are independent if and only if $p$ and $q$ are relatively prime in $P$. 
\end{enumerate}
Moreover, $(G,P,\theta)$ is said to be commutative if $G$ is commutative and it is said to be of finite type, if the index $[G:\theta_p(G)]$ is finite for all $p \in P$.

\begin{corollary}\label{cor:CIAD vs SIDoFT}
Let $G$ be a discrete abelian group, $P$ a monoid and $P \stackrel{\theta}{\curvearrowright}G$ an action by group endomorphisms. $(G,P,\theta)$ is a commutative irreversible algebraic dynamical system of finite type if and only if $(\hat{G},P,\hat{\theta})$ is an irreversible $*$-commutative dynamical system of finite type.
\end{corollary}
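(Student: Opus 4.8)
The plan is to unwind the two defining conditions via Pontryagin duality, reducing the only genuinely new point --- the equivalence of $*$-commutativity with independence --- to Proposition~\ref{prop:*-comm vs ind}. I identify $G$ with $\widehat{\hat G}$ and each $\theta_p$ with $\widehat{\hat\theta_p}$ under the canonical isomorphisms, so that it suffices to match the defining conditions of the two notions one biconditional at a time. Since $G$ is discrete abelian, $\hat G$ is a compact Hausdorff abelian group, which (under the standing separability assumptions) matches condition~(A) of Definition~\ref{def:SID} with $G$ being a countable discrete abelian group; condition~(B) is literally the same for the two triples; and, since the dualization functor is contravariant while $P$ is abelian, $\theta$ is a monoid action of $P$ on $G$ by endomorphisms if and only if $\hat\theta$ is a monoid action of $P$ on $\hat G$ by continuous endomorphisms. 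Finally, for each $p\in P$ one has $\ker\hat\theta_p=\theta_p(G)^\perp\cong\widehat{G/\theta_p(G)}$, so $|\ker\hat\theta_p|=[G:\theta_p(G)]$ (a natural number or $\infty$), as already recorded in Example~\ref{ex:transfer operator disc ab fin type}.

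Next I would show, for fixed $p\in P$, that $\theta_p$ is injective with $[G:\theta_p(G)]<\infty$ if and only if $\hat\theta_p$ is a regular surjective local homeomorphism of $\hat G$. The forward implication is exactly Example~\ref{ex:transfer operator disc ab fin type}. For the converse: surjectivity of $\hat\theta_p$ forces $\theta_p$ to be injective, since the characters of $G$ separate points, so a character not annihilating a nontrivial element of $\ker\theta_p$ would lie outside $\im\hat\theta_p$. Moreover $\hat\theta_p$, being the composition of the open quotient map $\hat G\to\hat G/\ker\hat\theta_p$ with the topological isomorphism $\hat G/\ker\hat\theta_p\to\hat G$, is open; an open continuous map is a local homeomorphism precisely when it is locally injective, i.e. when $\ker\hat\theta_p$ is discrete, i.e. (being a closed subgroup of the compact group $\hat G$) when $\ker\hat\theta_p$ is finite, i.e. when $[G:\theta_p(G)]<\infty$. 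Regularity is then automatic, because the fibres of a surjective homomorphism are the cosets of its kernel. Ranging over $p\in P$, this matches the requirement in Definition~\ref{def:SID}(C) that $\hat\theta$ act by regular surjective local homeomorphisms with the requirement that $\theta$ act by injective endomorphisms and that $(G,P,\theta)$ be of finite type.

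There remains the relation in Definition~\ref{def:SID}(C), so assume now the equivalent conditions of the previous paragraph; in particular each $\ker\hat\theta_p$ is finite, hence co-Hopfian. Fix $p,q\in P$. Applying Proposition~\ref{prop:*-comm vs ind} to the commuting surjective endomorphisms $\hat\theta_p,\hat\theta_q$ of $\hat G$, these $*$-commute if and only if they are strongly independent, i.e. $\ker\hat\theta_p\cap\ker\hat\theta_q=\{1_{\hat G}\}$. Passing to annihilators in $\hat G$, the subgroup $\ker\hat\theta_p\cdot\ker\hat\theta_q=\theta_p(G)^\perp+\theta_q(G)^\perp=(\theta_p(G)\cap\theta_q(G))^\perp$ has order $[G:\theta_p(G)\cap\theta_q(G)]$, while $|\ker\hat\theta_p|\,|\ker\hat\theta_q|=[G:\theta_p(G)]\,[G:\theta_q(G)]=[G:\theta_p\theta_q(G)]$, the last equality holding by the tower law together with the isomorphism $G/\theta_p(G)\cong\theta_q(G)/\theta_p\theta_q(G)$ induced by the injective endomorphism $\theta_q$. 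Hence $\ker\hat\theta_p\cap\ker\hat\theta_q=\{1_{\hat G}\}$ if and only if $[G:\theta_p(G)\cap\theta_q(G)]=[G:\theta_p\theta_q(G)]$, which, since $\theta_p\theta_q(G)\subseteq\theta_p(G)\cap\theta_q(G)$, is equivalent to $\theta_p(G)\cap\theta_q(G)=\theta_p\theta_q(G)$; and this is exactly independence of $\theta_p$ and $\theta_q$ in the sense of \cite{Sta1}. Thus $\hat\theta_p$ and $\hat\theta_q$ $*$-commute if and only if $\theta_p$ and $\theta_q$ are independent, so the defining relation for $(\hat G,P,\hat\theta)$ (the $\hat\theta_p$ and $\hat\theta_q$ $*$-commute iff $p$ and $q$ are relatively prime) translates into the defining relation for $(G,P,\theta)$ (the $\theta_p$ and $\theta_q$ are independent iff $p$ and $q$ are relatively prime). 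Combining the three steps proves the corollary.

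The only real obstacle I anticipate is bookkeeping rather than anything conceptual: one has to make sure all indices in sight are finite before invoking the annihilator/order dictionary, and one must pin down that independence of the \emph{injective} endomorphisms $\theta_p$ --- a notion not covered by Definition~\ref{def:ind surj}, which treats surjective endomorphisms --- is the condition $\theta_p(G)\cap\theta_q(G)=\theta_p\theta_q(G)$ that the duality produces from strong independence of the $\hat\theta_p$. Everything else is a routine application of Pontryagin duality, Example~\ref{ex:transfer operator disc ab fin type}, and Proposition~\ref{prop:*-comm vs ind}.
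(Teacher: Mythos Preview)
Your proof is correct and follows the same route as the paper, which simply writes ``This follows readily from Proposition~\ref{prop:*-comm vs ind}.'' You have faithfully unpacked what that one-liner leaves implicit: the Pontryagin duality dictionary between injectivity/finite index on $G$ and surjectivity/local homeomorphism on $\hat G$ (as in Example~\ref{ex:transfer operator disc ab fin type}), the observation that finite kernels are co-Hopfian so the converse in Proposition~\ref{prop:*-comm vs ind} applies, and the index computation translating strong independence of the $\hat\theta_p$ into the independence condition $\theta_p(G)\cap\theta_q(G)=\theta_p\theta_q(G)$ for the $\theta_p$. Your closing caveat about the notion of independence for injective endomorphisms not being restated in the present paper (only the surjective version in Definition~\ref{def:ind surj}) is well observed and is indeed the only point requiring care.
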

\begin{proof}
This follows readily from Proposition~\ref{prop:*-comm vs ind}.
\end{proof}

\noindent This last result provides examples for irreversible $*$-commutative dynamical systems of finite type coming from commutative irreversible algebraic dynamical systems of finite type, see \cite{Sta1} for details.\label{rem:ind less restrictive than *-comm}

There are also interesting examples of dynamical systems built from $\ast$-commuting transformations in symbolic dynamics, see for instance \cite{ER}*{Sections 10--14} and \cites{Wil2,MW}. On the other hand, $\ast$-commutativity is also considered to be a severe restriction. While $\ast$-commutativity implies strong independence in the case of surjective group endomorphisms, there are examples for commutative irreversible algebraic dynamical systems that do not satisfy the strong independence condition, see \cite{Sta1}.

For the remainder of this section, we would like to direct the reader's attention to another intriguing class of examples for irreversible algebraic dynamical systems, namely to dynamical systems arising from cellular automata. This part builds on \cite{ER}*{Section 14} and can be considered as a natural extension of the observations presented there. In the following, let $X = (\Z\big/2\Z)^\N$ and $\sigma$ denote the unilateral shift, i.e. $\sigma(x)_k = x_{k+1}$ for all $k \in \N$ and $x \in X$. Moreover, let $X_n = (\Z\big/2\Z)^n$ for $n \in \N$ and suppose we are given $D \subset X_n$. Then we can define a transformation $\theta_D$ of $X$ by the sliding window method
\[(\theta_D(x))_k = \chi_{D}(x_k,x_{k+1},\dots,x_{k+n-1}).\] 
In other words, the entry at place $k$ becomes $1$ if the word of length $n$ starting at place $k$ belongs to the so-called dictionary $D$. It is interesting to analyze the extent to which properties of $\theta_D$ can be expressed in terms of its dictionary. One outcome of such considerations are the following two definitions: 

\begin{definition}\label{def:adm dic}For $n \in \N$, a subset $D \subset X_n$ is called a \textit{dictionary}. $D$ is called \textit{progressive}, if for any $x \in X_{n-1}$, there is a unique $x_n \in X_1$ such that $(x_1,\dots,x_n) \in D$. $D$ is called \textit{admissible}, if it is progressive and has the property that, for $x,y,z \in X_n$, $x+y = z\in D$ implies that either $x \in D$ or $y \in D$ holds.
\end{definition}

\noindent Let us observe that $X_n{\setminus}D$ is a group of order $2^{n-1}$ for every admissible dictionary $D$. It is clear that $\theta_D$ is continuous on $X$ and commutes with $\sigma$ for every dictionary $D$. Morton L.~Curtis, Gustav A.~Hedlund and Roger Lyndon have shown in \cite{Hed} that any continuous self-map of $X$ which commutes with the shift $\sigma$ corresponds to a cellular automaton (Even though the article is authored by Hedlund only, he credits Curtis and Lyndon as co-discoverers in the introduction.). Thus $(X,\theta_D)$ can be identified as a cellular automaton. It is shown in \cite{ER}*{Theorem 14.3} that for progressive $D$, the transformation $\theta_D$ is a surjective local homeomorphism of $X$. This allows us to deduce:

\begin{proposition}\label{prop:adm dic}
If $D \subset X_n$ is an admissible dictionary, then $\theta_D$ is a continuous surjective group endomorphism of $X$ that commutes with $\sigma$. $\ker\theta_D$ is isomorphic to the group $X_n{\setminus}D$ and thus consists of $2^{n-1}$ elements.
\end{proposition}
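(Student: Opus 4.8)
The plan is to analyze the sliding-window map $\theta_D$ directly in terms of coordinatewise operations on $X = (\Z/2\Z)^\N$, exploiting the linear-algebraic structure hidden in the admissibility hypothesis. First I would establish that $\theta_D$ is a group homomorphism. Fix $x,y \in X$ and set $z = x+y$ (coordinatewise addition in $\Z/2\Z$). For each $k \in \N$ the window $(z_k,\dots,z_{k+n-1})$ equals the sum of the windows $w_k^x := (x_k,\dots,x_{k+n-1})$ and $w_k^y := (y_k,\dots,y_{k+n-1})$ in $X_n$. I want to show $\chi_D(w_k^x) + \chi_D(w_k^y) = \chi_D(w_k^x + w_k^y)$ in $\Z/2\Z$, which is exactly the statement that $X_n \setminus D$ is a subgroup of $X_n$ of index $2$ (equivalently, $\chi_D \colon X_n \to \Z/2\Z$ is a group homomorphism onto $\Z/2\Z$). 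This subgroup claim is the technical heart: progressivity says that for each $v \in X_{n-1}$ exactly one of the two extensions $(v,0),(v,1)$ lies in $D$, so $|D| = 2^{n-1} = |X_n \setminus D|$; and admissibility ($x+y=z \in D \Rightarrow x \in D$ or $y \in D$) says precisely that the complement $C := X_n \setminus D$ is closed under addition (if $x,y \in C$ but $x+y \in D$, admissibility forces $x \in D$ or $y \in D$, a contradiction). Since $0 \in C$ (indeed $0 \notin D$: taking $v = (0,\dots,0) \in X_{n-1}$, progressivity together with admissibility applied to $0 = 0+0$ shows the unique extension in $D$ is $(0,\dots,0,1)$, not $(0,\dots,0,0)$) and $C$ is finite and closed under the involutive group operation, $C$ is a subgroup. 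Hence $\chi_D$ is the quotient homomorphism $X_n \to X_n/C \cong \Z/2\Z$, giving $\chi_D(w_k^x + w_k^y) = \chi_D(w_k^x) + \chi_D(w_k^y)$ for every $k$, i.e. $\theta_D(x+y) = \theta_D(x) + \theta_D(y)$.

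Next, continuity of $\theta_D$ and the fact that it commutes with $\sigma$ were already noted to hold for every dictionary, and surjectivity follows from \cite{ER}*{Theorem 14.3} since an admissible dictionary is in particular progressive; so $\theta_D$ is a continuous surjective group endomorphism of $X$ commuting with $\sigma$. It remains to identify $\ker\theta_D$. By definition $x \in \ker\theta_D$ iff $w_k^x \in C = X_n \setminus D$ for all $k \in \N$. I would set up the evaluation-of-initial-segment map: for $x \in X$ let $r_1(x) := (x_1,\dots,x_{n-1}) \in X_{n-1}$, the first $n-1$ coordinates. The plan is to show that $x \mapsto r_1(x)$ restricts to a group isomorphism $\ker\theta_D \xrightarrow{\ \cong\ } X_{n-1}$, which gives $|\ker\theta_D| = 2^{n-1}$; and then separately note $\ker\theta_D \cong C = X_n\setminus D$ since both are abelian $2$-groups of the same order $2^{n-1}$ — wait, that cardinality argument alone does not pin down the isomorphism type, so I should instead produce the isomorphism $\ker\theta_D \cong X_n\setminus D$ explicitly. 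The cleaner route: consider instead the map sending $x \in \ker\theta_D$ to its window $w_1^x = (x_1,\dots,x_n) \in C$. I claim this is an isomorphism onto $C$. It is clearly a group homomorphism (restriction of the homomorphism $x \mapsto w_1^x$ from $X$ to $X_n$). Injectivity and surjectivity onto $C$ follow from a reconstruction/recursion argument: given any window $(c_1,\dots,c_n) \in C$, progressivity (applied to $(c_2,\dots,c_n) \in X_{n-1}$, noting $C$ is a subgroup so membership of the length-$n$ windows in $C$ is governed by $\chi_D$) forces a unique choice of $c_{n+1}$ with $(c_2,\dots,c_{n+1}) \notin D$, and iterating determines a unique $x \in X$ with $w_1^x = (c_1,\dots,c_n)$ and $w_k^x \in C$ for all $k$; this $x$ lies in $\ker\theta_D$ and is the unique preimage. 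Hence $\ker\theta_D \cong X_n \setminus D$, and combining with the subgroup count $|X_n \setminus D| = 2^{n-1}$ gives the stated cardinality.

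The main obstacle I anticipate is the clean verification that $C = X_n \setminus D$ is a subgroup from the admissibility axiom — specifically, being careful that "$x+y = z \in D$ implies $x \in D$ or $y \in D$" genuinely yields closure of the \emph{complement} under addition, and handling the edge case that $0 \notin D$ so that $C$ contains the identity. (In characteristic $2$ every nonempty finite subset closed under addition and containing $0$ is automatically a subgroup, since $-v = v$, so no further inverse check is needed.) A secondary, more bookkeeping-heavy point is the recursive reconstruction argument identifying $\ker\theta_D$ with $X_n\setminus D$: one must check that the unique forward-extension guaranteed by progressivity, when carried out indefinitely, produces an element \emph{all} of whose windows land in $C$, which is where I would invoke that $C$ is a subgroup to phrase "window in $C$" as the vanishing of $\chi_D$ and thereby make the induction transparent. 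Everything else — continuity, commuting with $\sigma$, surjectivity — is either immediate or already cited.
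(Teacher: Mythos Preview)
The paper states this proposition without proof (it is preceded by the sentence ``This allows us to deduce:'' and followed immediately by a remark), so there is nothing to compare against; your proposal supplies the omitted argument, and the overall strategy --- show $X_n\setminus D$ is an index-$2$ subgroup so that $\chi_D$ is a homomorphism, then identify $\ker\theta_D$ with $X_n\setminus D$ via the first-window map and a progressive-extension recursion --- is correct and is exactly what one would expect.

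One step is not argued correctly. Your justification that $0 \notin D$ reads ``admissibility applied to $0 = 0+0$'', but that instance of admissibility says only that if $0\in D$ then $0\in D$, which is vacuous. The clean argument is: if $0\in D$, then for every $w\in X_n$ we have $w+w=0\in D$, so admissibility forces $w\in D$; hence $D=X_n$, contradicting $|D|=2^{n-1}<2^n$ from progressivity. With this fix your subgroup claim goes through: $C=X_n\setminus D$ contains $0$, is closed under addition by the contrapositive of admissibility, and in characteristic $2$ this already makes $C$ a subgroup of order $2^{n-1}$, so $\chi_D$ is the quotient map onto $\Z/2\Z$ and $\theta_D$ is a group endomorphism. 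The kernel computation is fine: progressivity says that for each $v\in X_{n-1}$ exactly one of $(v,0),(v,1)$ lies in $C$, which is precisely the uniqueness you need for the forward recursion determining $x\in\ker\theta_D$ from its initial window $w_1^x\in C$.
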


\begin{remark}\label{rem:adm dic fin kernel}
In view of Proposition~\ref{prop: eq star ind}, we are now in position to provide new examples for commutative irreversible algebraic dynamical systems of finite type in terms of their dual pictures. We note that it is easier to check strong independence of $\sigma$ and $\theta_D$ than examining $\ast$-commutativity of these for an admissible $D$. Indeed, $\ker\sigma$ is easily determined and Proposition~\ref{prop:adm dic} provides us with an explicit description of $\ker\theta_D$.
\end{remark}
 
\noindent A guiding example is the Ledrappier shift, see \cite{ER}*{Section 11}:

\begin{example}\label{ex:Ledrappier shift}
Let $Y$ be the subshift of $(\Z\big{/} 2\Z)^{\N^{2}}$ given by all sequences $y = (y_{n})_{n \in \N^{2}}$ s.t. $y_{n}+y_{n+e_{1}}+y_{n + e_{2}} = 0 \in \Z\big/2\Z$ for all $n \in \N^{2}$. $\N^{2} \stackrel{\theta}{\curvearrowright} Y$ is given by the coordinate shifts $\theta_{e_{i}}(y_{n})_{n} = (y_{n+e_{i}})_{n},~i = 1,2$. The four basic blocks in $Y$ are:
\begin{center}
\begin{tikzpicture}
\matrix [nodes=draw,column sep=0.1mm, row sep=0.1mm]
{
\node {0}; &&[15mm] \node {1}; &&[15mm] \node {0}; &&[15mm] \node {1}; \\
\node {0}; & \node {0}; & \node {0}; & \node {1}; &  \node{1}; & \node {1}; &  \node {1}; & \node {0}; \\
};
\end{tikzpicture}
\end{center}
Observe that, for any given $y \in Y$ and every path $(n_{m})_{m \in \N}$ with $n_{m+1} \in \{n_{m}+e_{1},n_{m}+e_{2}\}$, the sequence $(y_{n_{m}})_{m \in \N}$ determines $y$ completely. Conversely, one can show inductively, that for every path $(n_{m})_{m \in \N}$ and sequence $(y_{m})_{m \in \N}$ with $y_{m} \in \Z \big/ 2\Z$, there is an $y \in Y$ with $y_{n_{m}} = y_{m}$ for all $m$. One consequence of this is that there is a homeomorphism $Y \longrightarrow X = (\Z\big/2\Z)^\N$ given by restricting to the base row, i.e. $(y_{m,n})_{m,n \in \N} \mapsto (y_{n,0})_{n \in \N}$. Under this homeomorphism to the Bernoulli space, $\theta_{e_1}$ corresponds to the shift $\sigma$ on $X$ and $\theta_{e_2}$ corresponds to $x \mapsto x+\sigma(x) = (x_n+x_{n+1})_{n \in \N}$ for $x \in X$. In view of the example from cellular automata, it is quite intriguing to notice that the Ledrappier shift fits into the picture quite nicely: The conjugate map to the vertical shift is nothing but $\theta_D$ for the admissible dictionary $D = \{(0,1),(1,0)\}$. In fact, $(X,\theta_D)$ is the most basic non-trivial example of a cellular automaton coming from an admissible dictionary. By Proposition~\ref{prop:adm dic}, $\theta_D$ $\ast$-commutes with the shift, so $\theta_{e_1}$ and $\theta_{e_2}$ $\ast$-commute. Hence the Ledrappier shift gives rise to a commutative irreversible algebraic dynamical system of finite type.
\end{example}

\begin{remark}\label{rem:Ledrappier type ex from adm dic} 
We have seen that the Ledrappier shift can be obtained from an admissible dictionary. In fact, there is only one admissible dictionary $D$ for words of length $2$ such that the induced transformation $\theta_D$ $\ast$-commutes with shift $\sigma$. So the Ledrappier shift constitutes a minimal non-trivial example of a commutative irreversible algebraic dynamical system of finite type arising from a cellular automaton.
\end{remark}

\noindent Reversing the perspective, the Ledrappier shift is formed out of the cellular automaton $(X,\theta_D)$ by stacking the orbit. This is to say that for $x \in (\Z\big/2\Z)^\N$ the $k$-th row of the corresponding element in $(\Z\big/2\Z)^{\N^2}$ is given by $\theta_D^k(x)$. Building on this observation, we may always construct a subshift of $(\Z\big/2\Z)^{\N^2}$ out of a progressive dictionary. This may turn out to be a source of potentially interesting subshifts of $(\Z\big/2\Z)^{\N^2}$. Let us now look at what happens for dictionaries using longer words:

\begin{example}\label{ex:cellular automata}
Let $D_1,D_2 \subset X_3$ be the dictionaries
\[\begin{array}{rcl}
D_1 &=& \{(0,0,1),(1,0,0),(0,1,1),(1,1,0)\}\\
&\text{and}\\
D_2 &=& \{(0,0,1),(1,0,0),(0,1,0),(1,1,1)\}.
\end{array}\]  
Then $D_1$ and $D_2$ are admissible dictionaries. Hence, $\theta_{D_1}$ and $\theta_{D_2}$ are surjective group endomorphisms of $X = (\Z\big/2\Z)^\N$ that commute with the shift $\sigma$ and 
\[\begin{array}{rcl}
\ker\theta_{D_1} &=& \{0,1,(\overline{0,1},\dots),(\overline{1,0},\dots)\}, \\
\ker\theta_{D_2} &=& \{0,(\overline{1,0,1},\dots),(\overline{0,1,1},\dots),(\overline{1,1,0},\dots)\},
\end{array}\]
where we write $(\overline{a,b,c},\dots)$ for the periodic word $(a,b,c,a,b,c,\dots)$. Apparently, we have $\ker\sigma = \{0,(1,\overline{0},\dots)\}$, so $\sigma$ and $\theta_{D_i}$ are strongly independent for $i = 1,2$. By Proposition~\ref{prop: eq star ind}, they also $\ast$-commute. Hence, each $D_i$ gives rise to a commutative irreversible algebraic dynamical system of finite type $(G,P,\theta)$ with $G = \hat{X}$ and $P = |\sigma, \theta_{D_i}\rangle \cong \N^2$ acting by their dual endomorphisms. Noting that $\ker\theta_{D_1} \cap \ker\theta_{D_2}$ is trivial, we also get a commutative irreversible algebraic dynamical system of finite type for $P= |\sigma, \theta_{D_1}, \theta_{D_2}\rangle \cong \N^3$. 
\end{example}

\begin{remark}\label{rem:adm dics for length 3} 
In fact, $D_1$ and $D_2$ are the only admissible dictionaries for words of length $3$ for which the induced transformation $*$-commutes with $\sigma$. Indeed, every such admissible dictionary $D$ needs to contain $(0,0,1)$ and $(1,0,0)$. If $(0,0,0) \in D$, then $D$ cannot induce a group homomorphism. Likewise, if we had $(1,0,0) \notin D$, then $\ker\theta_D$ would contain $\ker\sigma$. In particular, their intersection would be non-trivial. Now, if $(0,1,1) \in D$, then this forces $(1,1,0) \in D$ since $(0,1,1)+(1,1,1) = (1,0,0) \in D$. Similarly, $(0,1,0) \in D$ forces $(1,1,1) \in D$ since $(0,1,0)+(1,1,0) = (1,0,0) \in D$. One can check that there are precisely two additional admissible dictionaries $D_3,D_4 \subset X_3$ given by 
\[\begin{array}{rcl}
D_3 &=& \{(0,0,1),(1,0,1),(0,1,0),(1,1,0)\}\\
&\text{and}\\
D_4 &=& \{(0,0,1),(1,0,1),(0,1,1),(1,1,1)\}.
\end{array}\]  
Thus, there are four admissible dictionaries for word length $3$, two of which induce surjective group endomorphisms of $X$ that $\ast$-commute with the shift $\sigma$. The corresponding group endomorphisms of $X$ are
\[\begin{array}{rclcrcl}
\theta_1(x) &=& x + \sigma^2(x)&\hspace*{4mm}\multirow{2}{*}{and}\hspace*{4mm}& \theta_3(x) &=& \sigma(x) + \sigma^2(x)\\
\theta_2(x) &=& x + \sigma(x) + \sigma^2(x)&& \theta_4(x) &=& \sigma^2(x).
\end{array}\]
This simple description raises the question whether it might be possible to characterize admissibility of a dictionary $D \subset X_n$  for general $n \geq 2$ and $*$-commutativity of $\theta_{D}$ with $\sigma$ in a more accessible way.
\end{remark}

\begin{remark}\label{rem:comparison to ER-ex}
In \cite{ER}*{Example 14.4}, Ruy Exel and Jean Renault provide an example of a progressive dictionary which does not induce a transformation that $\ast$-commutes with the shift, namely 
\[D = \{(0,0,0),(1,0,0),(0,1,0),(1,1,1)\}.\] 
This is stated implicitly in \cite{ER}*{Corollary 14.5} and follows from \cite{ER}*{Theorem 10.4 and Proposition 14.1}. However, this dictionary does not give a group homomorphism of $X$ because it contains the neutral element of $X$ and hence $\theta_D(0) \neq 0$. The dictionary $D_2$ from Example~\ref{ex:cellular automata} is a slight variation of \cite{ER}*{Example 14.4} designed to produce a group homomorphism.
\end{remark}

\section{C*-algebras with reconstruction formulas}
\noindent
This section is devoted to the construction of universal C*-algebras for irreversible $*$-commutative dynamical systems of finite type $(X,P,\theta)$. We show that this construction is consistent with the natural realization of $(X,P,\theta)$ as operators on $\ell^2(X)$, see Proposition~\ref{prop:elementary rep SIDoFT}. Moreover, we show that, for commutative irreversible algebraic dynamical systems of finite type $(G,P,\theta)$, there is a natural isomorphism between $\CO[G,P,\theta]$ and $\CO[\hat{G},P,\hat{\theta}]$, see Proposition~\ref{prop:consistent constr for CIADoFT}. In addition, we establish a few elementary properties for $\CO[X,P,\theta]$ and its core subalgebra $\CF$. A fair amount of the results from this section is relevant for Section~4.  

Throughout this section, $(X,P,\theta)$ denotes an irreversible $*$-commutative dynamical system, unless specified otherwise. Recall that, for $p \in P$, the endomorphism $\alpha_p$ of $C(X)$ and its transfer operator $L_p$ are given by
\[\begin{array}{c} \alpha_p(f)(x) = f(\theta_p(x)) \text{ and } L_p(f)(x) = \frac{1}{N_p}\sum\limits_{y \in \theta_p^{-1}(x)}\hspace*{-2mm}f(y) \text{ for } x \in X,f \in C(X), \end{array}\]
where $N_p = |\theta_p^{-1}(x)|$. Moreover, we let $E_p := \alpha_p \circ L_p:C(X) \longrightarrow \alpha_p(C(X))$ denote the corresponding conditional expectation.
 
\begin{restatable}{definition}{SIDoFTalg}\label{def:O[X,P,theta]}
$\CO[X,P,\theta]$ is the universal C*-algebra generated by $C(X)$ and a representation of the monoid $P$ by isometries $(s_{p})_{p \in P}$ subject to the relations:
\[\begin{array}{rrrcll}
(I)&&s_pf &\hspace*{-1mm}=\hspace*{-1mm}& \alpha_p(f)s_p&\text{ for all } f \in C(X),p\in P.\\
(II)&&s_p^*fs_p &\hspace*{-1mm}=\hspace*{-1mm}& L_p(f)&\text{ for all } f \in C(X),p\in P.\\
(III)&&s_p^*s_q &\hspace*{-1mm}=\hspace*{-1mm}& s_qs_p^*&\text{ if $p$ and $q$ are relatively prime.}\\
(IV)&\multicolumn{5}{l}{\text{If $p \in P$ and $f_{1,1},\dots,f_{n,1},f_{1,2},\dots,f_{n,2} \in C(X)$, satisfy}}\vspace*{2mm}\\ 
&\sum\limits_{1 \leq i \leq n}&{f_{i,1}E_p(\bar{f}_{i,2}f)} &\hspace*{-1mm}=\hspace*{-1mm}& f &\text{ for all } f \in C(X), \text{then}\vspace*{2mm}\\
 &\sum\limits_{1 \leq i \leq n}&{f_{i,1}s_ps_p^*\bar{f}_{i,2}} &\hspace*{-1mm}=\hspace*{-1mm}& 1.
\end{array}\]
\end{restatable}

\noindent The next lemma explains the motivation behind relation (IV).

\begin{lemma}\label{lem:CNP equiv}
For every $p \in P$, the validity of relation (IV) from Definition~\ref{def:O[X,P,theta]} is independent of the choice of the family $(f_{i,j})_{1 \leq i \leq m, j = 1,2}$ satisfying the reconstruction formula. In particular, if $\CU = (U_{i})_{1 \leq i \leq n}$ is a finite open cover of $X$ such that the restriction of $\theta_{p}$ to each $U_{i}$ is injective and $(v_{i})_{1 \leq i \leq n}$ is a partition of unity for $X$ subordinate to $\CU$, then
\[\begin{array}{c}\sum\limits_{1 \leq i \leq n}{\nu_{i}s_{p}s_{p}^{*}\nu_{i}} = 1 \end{array}\]
holds for $\nu_{i} = (N_{p}v_{i})^{\frac{1}{2}}$.
\end{lemma}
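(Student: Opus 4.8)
The plan is to show that relation (IV) is equivalent to a single relation phrased purely in terms of the conditional expectation $E_p$ and the projection $s_p s_p^*$, which does not reference the particular reconstruction family. First I would observe that $s_p s_p^*$ is a projection (since $s_p$ is an isometry) and that relation (I) gives $s_p s_p^* f = \alpha_p(L_p(f)) s_p s_p^* = E_p(f) s_p s_p^*$ by combining (I) and (II): indeed $s_p s_p^* f s_p = s_p L_p(f) = \alpha_p(L_p(f)) s_p = E_p(f) s_p$, and multiplying on the right by $s_p^*$ yields $s_p s_p^* f s_p s_p^* = E_p(f) s_p s_p^*$; a short argument (or appeal to the fact that $s_p s_p^*$ commutes with $E_p(C(X)) = \alpha_p(C(X))$, since $\alpha_p(g) s_p s_p^* = s_p g s_p^* = s_p s_p^* \alpha_p(g)$ using (I)) then upgrades this to $s_p s_p^* f = E_p(f) s_p s_p^*$ for all $f \in C(X)$. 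The key computational identity is therefore: for any finite family $(f_{i,1}, f_{i,2})$ in $C(X)$ and any $f \in C(X)$,
\[
\Bigl(\sum_{1 \le i \le n} f_{i,1} s_p s_p^* \bar f_{i,2}\Bigr) f = \sum_{1 \le i \le n} f_{i,1}\, E_p(\bar f_{i,2} f)\, s_p s_p^*.
\]

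Given this identity, the equivalence is immediate in one direction: if the reconstruction formula $\sum_i f_{i,1} E_p(\bar f_{i,2} f) = f$ holds for all $f$, then the right-hand side above equals $f s_p s_p^*$, so $T_p f = f s_p s_p^*$ for $T_p := \sum_i f_{i,1} s_p s_p^* \bar f_{i,2}$; in particular, taking $f = 1$ shows that relation (IV), $T_p = 1$, is equivalent to $s_p s_p^* = 1$, a statement about the single element $s_p$. So whether (IV) holds for one reconstruction family is the same as whether it holds for any other: both are equivalent to $s_p s_p^* = 1_{\CO[X,P,\theta]}$. This proves the first assertion.

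For the ``in particular'' clause, I would invoke Lemma~\ref{lem:loc homeo right reconstruction} applied to $\theta_p$: a finite open cover $\CU = (U_i)$ on which $\theta_p$ is injective, a subordinate partition of unity $(v_i)$, and $\nu_i = (N_p v_i)^{1/2}$ satisfy $\sum_i \nu_i\, \alpha_p(L_p(\nu_i f)) = f$, i.e. $\sum_i \nu_i E_p(\nu_i f) = f$ for all $f \in C(X)$ (here the $\nu_i$ are real-valued, so $\bar\nu_i = \nu_i$). This is exactly the reconstruction formula required by relation (IV) with $f_{i,1} = f_{i,2} = \nu_i$, so by the first part of the lemma (IV) forces $\sum_i \nu_i s_p s_p^* \nu_i = 1$.

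The main obstacle is the bookkeeping in establishing the displayed key identity $s_p s_p^* f = E_p(f) s_p s_p^*$ and then pushing it through the sum; this is where relations (I) and (II) interact, and one must be slightly careful that $s_p s_p^*$ genuinely commutes past elements of $C(X)$ only after applying $E_p$ (it does not commute with $C(X)$ itself). Once that commutation lemma is in hand, everything else is formal, and the independence statement drops out of the observation that the validity of (IV) collapses to the single relation $s_p s_p^* = 1$, independent of any choices.
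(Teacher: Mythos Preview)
There is a genuine gap in your argument: the ``upgrade'' from $s_p s_p^* f s_p s_p^* = E_p(f)\,s_p s_p^*$ to $s_p s_p^* f = E_p(f)\,s_p s_p^*$ is invalid, and in fact the latter identity is false whenever $\theta_p$ is not injective. To see this, pick any $f\in C(X)$ with $E_p(f)=0$ but $f\neq 0$ (such $f$ exist as soon as $\alpha_p$ is not surjective). Your claimed identity would give $s_p s_p^* f = 0$, hence $s_p^* f = 0$; but $\|s_p^* f\|^2 = \|s_p^* |f|^2 s_p\| = \|L_p(|f|^2)\|$, and $L_p(|f|^2)(x) = N_p^{-1}\sum_{y\in\theta_p^{-1}(x)}|f(y)|^2$ vanishes only if $f=0$. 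Consequently your conclusion that (IV) is equivalent to $s_p s_p^* = 1$ is also wrong: in $\CO[X,P,\theta]$ relation (IV) certainly holds, yet $s_p$ is never unitary for $p\neq 1_P$, since $s_p f s_p^* = \alpha_p(f) s_p s_p^*$ together with $s_p s_p^*=1$ would force $\alpha_p$ to be an automorphism of $C(X)$.

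The fix is to stay with the identity you \emph{did} establish, namely $s_p s_p^* h\, s_p s_p^* = E_p(h)\,s_p s_p^*$, and to multiply two reconstruction sums against each other rather than against a bare $f\in C(X)$. If $(f_{i,j})$ and $(g_{k,\ell})$ both satisfy the reconstruction formula and (IV) holds for the first family, then
\[
\sum_k g_{k,1}s_ps_p^*\bar g_{k,2}
= \Bigl(\sum_k g_{k,1}s_ps_p^*\bar g_{k,2}\Bigr)\Bigl(\sum_i f_{i,1}s_ps_p^*\bar f_{i,2}\Bigr)
= \sum_i\Bigl(\sum_k g_{k,1}E_p(\bar g_{k,2}f_{i,1})\Bigr)s_ps_p^*\bar f_{i,2}
= \sum_i f_{i,1}s_ps_p^*\bar f_{i,2}=1,
\]
which is exactly the paper's computation. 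Your treatment of the ``in particular'' clause via Lemma~\ref{lem:loc homeo right reconstruction} is fine.
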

\begin{proof}
For the first part, let $(f_{i,j})_{1 \leq i \leq m, j = 1,2}$ and $(g_{k,\ell})_{1 \leq k \leq n, \ell = 1,2}$ be two families in $C(X)$ that both satisfy the reconstruction formula for all $f \in C(X)$. Now if relation (IV) from Definition~\ref{def:O[X,P,theta]} holds for $(f_{i,j})_{1 \leq i \leq m, j = 1,2}$, then 
\[\begin{array}{rcl}
\sum\limits_{1 \leq k \leq n}{g_{k,1}s_ps_p^*\bar{g}_{k,2}} &=& \sum\limits_{1 \leq k \leq n}{g_{k,1}s_ps_p^*\bar{g}_{k,2}}\hspace{2mm} \sum\limits_{1 \leq i \leq m}{f_{i,1}s_ps_p^*\bar{f}_{i,2}} \vspace*{2mm}\\
&=& \sum\limits_{1 \leq i \leq m}\hspace{2mm}\sum\limits_{1 \leq k \leq n}{g_{k,1} E_p(\bar{g}_{k,2}f_{i,1})s_ps_p^*\bar{f}_{i,2}} \vspace*{2mm}\\
&=& \sum\limits_{1 \leq i \leq m}{f_{i,1}s_ps_p^*\bar{f}_{i,2}} = 1. 
\end{array}\]
The second claim follows from Lemma~\ref{lem:loc homeo right reconstruction}.
\end{proof}

\noindent Since finite open covers of the form appearing in Lemma~\ref{lem:CNP equiv} always exist for surjective local homeomorphisms of compact Hausdorff spaces, see Lemma~\ref{lem:loc homeo right reconstruction}, there are in fact functions $f_{i,j}$ satisfying the reconstruction formula for each $p \in P$. Thus, relation (IV) is non-void.

Let us recall the definition of the C*-algebra $\CO[G,P,\theta]$ for an irreversible algebraic dynamical system $(G,P,\theta)$ from \cite{Sta1} because there is a close connection between the defining relations: $\CO[G,P,\theta]$ is the universal C*-algebra generated by a unitary representation $(u_{g})_{g \in G}$ of the group $G$ and a representation $(s_{p})_{p \in P}$ of the semigroup $P$ by isometries subject to the relations:
\[\begin{array}{lrcl}
(\text{CNP }1) & s_{p}u_{g} &\hspace*{-2.5mm}=\hspace*{-2.5mm}& u_{\theta_{p}(g)}s_{p}\vspace*{2mm}\\
(\text{CNP }2) & s_{p}^{*}u_gs_{q} &\hspace*{-2.5mm}=\hspace*{-2.5mm}& \begin{cases} 
u_{g_1}s_{(p \wedge q)^{-1}q}s_{(p \wedge q)^{-1}p}^{*}u_{g_2}& \text{ if } g = \theta_p(g_1)\theta_q(g_2),\\ 0,& \text{ else.}\end{cases}\vspace*{2mm}\\
(\text{CNP }3) & 1 &\hspace*{-2.5mm}=\hspace*{-2.5mm}& \sum\limits_{[g] \in G/\theta_{p}(G)}{e_{g,p}} \hspace*{2mm}\text{ if } [G : \theta_{p}(G)]< \infty,
\end{array}\]
where $e_{g,p} = u_{g}s_{p}s_{p}^{*}u_{g}^{*}$. 

We will now show that the two constructions yield the same C*-algebra if both methods are applicable, that is, if $(G,P,\theta)$ of finite type and $G$ is commutative, see Corollary~\ref{cor:CIAD vs SIDoFT}. Recall that the dual model $(\hat{G},P,\hat{\theta})$ is an irreversible $*$-commutative dynamical system of finite type in this case.

\begin{proposition}\label{prop:consistent constr for CIADoFT}
Let $(G,P,\theta)$ be a commutative irreversible algebraic dynamical system of finite type. If $(u_g)_{g \in G}$ and $(s_p)_{p \in P}$ denote the canonical generators of $\CO[G,P,\theta]$ and $(w_g)_{g \in G}$ and $(v_p)_{p \in P}$ denote the canonical generators of $\CO[\hat{G},P,\hat{\theta}]$, then 
\[\begin{array}{rcl}
\CO[G,P,\theta] &\stackrel{\varphi}{\longrightarrow}& \CO[\hat{G},P,\hat{\theta}]\\
u_gs_p &\mapsto& w_gv_p
\end{array}\]
is an isomorphism.
\end{proposition}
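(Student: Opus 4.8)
The plan is to construct mutually inverse $*$-homomorphisms between the two universal C*-algebras by matching generators and checking the defining relations. First I would identify, on the side of $\CO[\hat{G},P,\hat{\theta}]$, the copy of $C(\hat{G}) = C^*(G)$ as the closed linear span of the unitaries $w_g$, and on the side of $\CO[G,P,\theta]$ the copy of $C^*(G)$ spanned by the $u_g$; under Pontryagin duality $C^*(G) \cong C(\hat{G})$, and the endomorphism $\alpha_p$ of $C(\hat{G})$ induced by $\hat{\theta}_p$ corresponds to the endomorphism $u_g \mapsto u_{\theta_p(g)}$ of $C^*(G)$, while the transfer operator $L_p$ acts as in Example~\ref{ex:transfer operator disc ab fin type}, namely $L_p(u_g) = \chi_{\theta_p(G)}(g)\, u_{\theta_p^{-1}(g)}$. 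With these identifications in place, the map $\varphi: u_g s_p \mapsto w_g v_p$ (extended so that $u_g \mapsto w_g$ on $C^*(G)$ and $s_p \mapsto v_p$) is well-defined as a candidate homomorphism on generators, and by symmetry there is a candidate inverse $\psi: w_g v_p \mapsto u_g s_p$; it then suffices to check that $\varphi$ respects the relations defining $\CO[\hat{G},P,\hat{\theta}]$ (and likewise $\psi$).

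The key verification is that the images $w_g, v_p$ in $\CO[G,P,\theta]$ — equivalently, that $u_g, s_p$ together with the identification $C^*(G) \cong C(\hat{G})$ — satisfy relations (I)--(IV) of Definition~\ref{def:O[X,P,theta]}. Relation (I), $s_p f = \alpha_p(f) s_p$ for $f \in C(\hat G)$, is exactly (CNP~1) read through the duality, since $\alpha_p$ corresponds to $u_g \mapsto u_{\theta_p(g)}$ and both sides are continuous and linear in $f$. Relation (II), $s_p^* f s_p = L_p(f)$, follows from (CNP~2) with $q = 1_P$: one gets $s_p^* u_g s_p = u_{g_1} s_p s_p^* $ with $g = \theta_p(g_1)$ when $g \in \theta_p(G)$ and $0$ otherwise — wait, more carefully, taking $q=1_P$ in (CNP 2) gives $s_p^* u_g s_p = u_{g_1} s_{p}^{} s_p^* u_{g_2}$ only in degenerate form; the cleaner route is to use that $E_p = \alpha_p \circ L_p$ is the conditional expectation $a \mapsto \sum_{[g] \in G/\theta_p(G)} e_{g,p}\, a\, e_{g,p}$-type object, but in fact the direct computation $s_p^* u_g s_p = \chi_{\theta_p(G)}(g) u_{\theta_p^{-1}(g)}$ matches $L_p(u_g)$ by Example~\ref{ex:transfer operator disc ab fin type}, and linearity plus continuity extends it to all of $C^*(G)$. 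Relation (III), $s_p^* s_q = s_q s_p^*$ for relatively prime $p,q$, is the $g = 1_G$ instance of (CNP~2) since then $p \wedge q = 1_P$. Relation (IV) is the subtle one: a family $(f_{i,j})$ in $C(\hat G)$ satisfying the reconstruction formula $\sum_i f_{i,1} E_p(\bar f_{i,2} f) = f$ translates, via the duality, into a statement about $G/\theta_p(G)$, and the conclusion $\sum_i f_{i,1} s_p s_p^* \bar f_{i,2} = 1$ must be deduced from (CNP~3); by Lemma~\ref{lem:CNP equiv} it is enough to check (IV) for one convenient choice of $(f_{i,j})$, and I would pick the $f_{i,j}$ supported on the cosets of $\theta_p(G)$ so that $f_{i,1} s_p s_p^* \bar f_{i,2}$ becomes $e_{g,p}$ and the identity reduces exactly to (CNP~3).

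Conversely, to see that $\psi$ respects the relations (CNP~1)--(CNP~3) defining $\CO[G,P,\theta]$, I would run the same dictionary in reverse: (CNP~1) is relation~(I); (CNP~3) is the special case of relation~(IV) obtained from the reconstruction formula associated with the finite open cover of $\hat G$ by the clopen sets dual to the cosets of $\theta_p(G)$, exactly as in Lemma~\ref{lem:CNP equiv} (here $N_p = [G:\theta_p(G)]$ and the $v_i$ are the normalized indicator functions of those clopen sets); and (CNP~2) is the one requiring real work, since it packages commutation of $s_p^*$ with $s_q$ for relatively prime pieces together with the transfer-operator relation. For (CNP~2) I would first reduce to the two cases $g \in \theta_p(G)\theta_q(G)$ and $g \notin \theta_p(G)\theta_q(G)$; in the first case factor $g = \theta_p(g_1)\theta_q(g_2)$, use relation~(I) to pull the unitaries out past the isometries, reducing to the identity $s_p^* s_q = s_{(p\wedge q)^{-1}q} s_{(p\wedge q)^{-1}p}^*$, which for relatively prime $p,q$ is relation~(III) and in general follows from it together with $s_r^* s_r = 1$; in the second case one shows $s_p^* u_g s_q = 0$ by observing that $u_g \notin \alpha_p(C(\hat G)) \alpha_q(C(\hat G))$ forces the relevant product to vanish, again using (I), (II) and the structure of $\theta_p^{-1}, \theta_q^{-1}$ via Proposition~\ref{prop: eq star ind}. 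Finally, since $\varphi$ and $\psi$ are inverse to each other on generators and both are $*$-homomorphisms on the respective universal C*-algebras, they are mutually inverse isomorphisms.

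I expect the main obstacle to be the bookkeeping in establishing (CNP~2) from relations (I)--(IV), specifically the vanishing clause $s_p^* u_g s_q = 0$ when $g \notin \theta_p(G)\theta_q(G)$: this needs the interplay of $*$-commutativity (hence strong independence, via Proposition~\ref{prop:*-comm vs ind}) of the dual endomorphisms with the transfer-operator relation, and one has to be careful that the relatively-prime hypothesis is invoked exactly where (CNP~2) assumes $p \wedge q$ is factored out. Everything else is a direct translation through Pontryagin duality and Example~\ref{ex:transfer operator disc ab fin type}, together with the universal properties.
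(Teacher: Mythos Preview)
Your approach is essentially the same as the paper's: verify (CNP~1)--(CNP~3) for the generators $w_g,v_p$ of $\CO[\hat G,P,\hat\theta]$ to obtain $\varphi$, and verify (I)--(IV) for $u_g,s_p$ in $\CO[G,P,\theta]$ to obtain the inverse. (You have the labels $\varphi$ and $\psi$ swapped in places --- $\varphi$ has source $\CO[G,P,\theta]$, so it is the map for which (CNP~1)--(CNP~3) must be checked in the target --- but the substance is right.)

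The one place your sketch is vaguer than the paper is precisely the vanishing clause of (CNP~2), which you correctly flag as the obstacle. The paper's argument is short and worth internalizing: since $(G,P,\theta)$ is of finite type, $\theta_{(p\wedge q)^{-1}p}$ and $\theta_{(p\wedge q)^{-1}q}$ are strongly independent, hence $\theta_{(p\wedge q)^{-1}p}(G)\theta_{(p\wedge q)^{-1}q}(G)=G$; therefore $g\notin\theta_p(G)\theta_q(G)$ forces $g\notin\theta_{p\wedge q}(G)$. Now factor $v_p=v_{(p\wedge q)^{-1}p}v_{p\wedge q}$ and $v_q=v_{p\wedge q}v_{(p\wedge q)^{-1}q}$ and apply (II) together with Example~\ref{ex:transfer operator disc ab fin type} to get $v_{p\wedge q}^* w_g v_{p\wedge q}=L_{p\wedge q}(w_g)=0$. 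This is cleaner than trying to argue directly with $\alpha_p(C(\hat G))\alpha_q(C(\hat G))$ or Proposition~\ref{prop: eq star ind}.
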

\begin{proof}
It is clear that $(w_g)_{g \in G}$ and $(v_p)_{p \in P}$ satisfy (CNP 1). (CNP 3) follows from (IV) since we can easily check the reconstruction formula required in (IV) on each $w_g$ and note that $C(\hat{G})$ can be identified with the closed linear span of $(w_g)_{g \in G}$. It remains to prove (CNP 2), that is, 
\[v_p^*w_gv_q = \chi_{\theta_p(G)\theta_q(G)}(g)~w_{g_1}v_{(p \wedge q)^{-1}q}v_{(p \wedge q)^{-1}p}^*w_{g_2} \text{ for all } g \in G \text{ and } p,q \in P,\]
for $g = \theta_p(g_1)\theta_q(g_2)$, and $v_p^*w_gv_q = 0$ otherwise. The case $g \in \theta_p(G)\theta_q(G)$ follows in a straightforward manner from (I) and (III), so suppose $g \notin \theta_p(G)\theta_q(G)$. Since $(G,P,\theta)$ is of finite type, $\theta_{(p \wedge q)^{-1}p}$ and $\theta_{(p \wedge q)^{-1}q}$ are strongly independent. So we have $\theta_{(p \wedge q)^{-1}p}(G)\theta_{(p \wedge q)^{-1}q}(G) = G$ and hence 
\[g \notin \theta_{p \wedge q}(\theta_{(p \wedge q)^{-1}p}(G)\theta_{(p \wedge q)^{-1}q}(G)) = \theta_{p \wedge q}(G)\]
and, with the help of Example~\ref{ex:transfer operator disc ab fin type} we conclude that
\[v_p^*w_gv_q = v_{(p \wedge q)^{-1}p}^*v_{p \wedge q}^*w_gv_{p \wedge q}v_{(p \wedge q)^{-1}q} \stackrel{(II)}{=} v_{(p \wedge q)^{-1}p}^*L_{p \wedge q}(w_g)v_{(p \wedge q)^{-1}q} = 0.\]
Thus we have shown that $\varphi$ is a surjective $\ast$-homomorphism. In order to see that $\varphi$ is an isomorphism, it suffices to check that $C^*((u_g)_{g \in G}) \cong C(\hat{G})$ and $(s_p)_{p \in P}$ satisfy (I)--(IV). Condition (I) is nothing but (CNP 1). Conditions (II) and (III) follow from (CNP 2) using Example~\ref{ex:transfer operator disc ab fin type}. Finally, (IV) can be deduced from (CNP 3) with the help of Lemma~\ref{lem:CNP equiv}.
\end{proof}

\noindent We have seen in Lemma~\ref{lem:CNP equiv} that we can always choose elements $f_{i,j}$ satisfying the reconstruction formula for (IV) in such a way that we get a C*-algebraic partition of unity in $\CO[X,P,\theta]$, that is, the corresponding elements are positive and sum up to one. Unless $X$ is totally disconnected, this may produce a number of genuine positive elements exceeding the actual number of preimages a single point has. For example, the minimal number of elements appearing in a partition of unity as in Lemma~\ref{lem:CNP equiv} for the map $\times2: \IT \longrightarrow \IT$ is three. 

One particular feature of commutative irreversible algebraic dynamical systems of finite type compared to arbitrary irreversible $*$-commutative dynamical systems of finite type is that we can choose the elements satisfying the reconstruction formula for (IV) in a different manner using the algebraic structure. This allows us to reduce the number of positive elements needed to the optimal value, that is, the size of the kernel of the group endomorphism on $\hat{G}$. Moreover, the elements forming the C*-algebraic partition of unity are projections in this case. 

Now that we have already established some connections to Section~2.2, let us start with an analysis of basic properties of the C*-algebra $\CO[X,P,\theta]$. First of all, there is a natural representation of $\CO[X,P,\theta]$ on $\ell^2(X)$, whose standard orthonormal basis will be denoted by $(\xi_{x})_{x \in X}$:

\begin{proposition}\label{prop:elementary rep SIDoFT}
For $f \in C(X)$, let $M_{f} \xi_{x} = f(x) \xi_{x}$. For $p \in P$, define $S_{p} \xi_{x} = N_{p}^{-\frac{1}{2}} \sum_{y \in \theta_{p}^{-1}(x)}{\xi_{y}}$. Then the map $fs_p \mapsto M_fS_p$ defines a representation $\varphi$ of $\CO[X,P,\theta]$ on $\ell^{2}(X)$, which is faithful on $C(X)$.
\end{proposition}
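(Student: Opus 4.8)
The plan is to produce the homomorphism $\varphi$ from the universal property of $\CO[X,P,\theta]$ by checking that the operators $M_f$ ($f\in C(X)$) and $S_p$ ($p\in P$) on $\ell^2(X)$ satisfy the defining relations (I)--(IV) of Definition~\ref{def:O[X,P,theta]}, and then to observe that $\varphi$ is faithful on $C(X)$ because $f\mapsto M_f$ already is. The latter is immediate: $f\mapsto M_f$ is a unital $*$-homomorphism into $B(\ell^2(X))$, and $M_f\xi_x=f(x)\xi_x\neq 0$ whenever $f(x)\neq 0$.

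First I would record the basic facts about the $S_p$. Since the fibers $\theta_p^{-1}(x)$ are pairwise disjoint with $|\theta_p^{-1}(x)|=N_p$, the vectors $S_p\xi_x=N_p^{-1/2}\sum_{y\in\theta_p^{-1}(x)}\xi_y$ are orthonormal, so $S_p$ is an isometry; taking adjoints gives $S_p^*\xi_y=N_p^{-1/2}\xi_{\theta_p(y)}$, a formula used throughout. Because $(\theta_q\theta_p)^{-1}(x)$ is the disjoint union of the sets $\theta_p^{-1}(y)$ over $y\in\theta_q^{-1}(x)$ (which also gives $N_{pq}=N_pN_q$), one checks $S_pS_q=S_{pq}$ and $S_{1_P}=I$, so $p\mapsto S_p$ is a representation of $P$ by isometries. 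Relations (I) and (II) are then one-line computations on basis vectors: $S_pM_f\xi_x=f(x)S_p\xi_x=M_{\alpha_p(f)}S_p\xi_x$ since $\alpha_p(f)(y)=f(\theta_p(y))=f(x)$ for $y\in\theta_p^{-1}(x)$, and $S_p^*M_fS_p\xi_x=N_p^{-1}\sum_{y\in\theta_p^{-1}(x)}f(y)\,\xi_x=L_p(f)(x)\xi_x=M_{L_p(f)}\xi_x$.

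Relation (III) is where $*$-commutativity enters. For relatively prime $p,q$ the maps $\theta_p,\theta_q$ $*$-commute by Definition~\ref{def:SID}, hence by the equivalence with condition (iii$'$) in Proposition~\ref{prop: eq star ind} the map $\theta_p\colon\theta_q^{-1}(y)\to\theta_q^{-1}(\theta_p(y))$ is a bijection for every $y$. Since $S_qS_p^*\xi_y=(N_pN_q)^{-1/2}\sum_{z\in\theta_q^{-1}(\theta_p(y))}\xi_z$ while $S_p^*S_q\xi_y=(N_pN_q)^{-1/2}\sum_{w\in\theta_q^{-1}(y)}\xi_{\theta_p(w)}$, that bijection says exactly that these two vectors coincide, so $S_p^*S_q=S_qS_p^*$.

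The substantive step is relation (IV), where I would convert the reconstruction formula into a pointwise identity along the fibers of $\theta_p$. Assume $f_{1,1},\dots,f_{n,2}\in C(X)$ satisfy $\sum_i f_{i,1}E_p(\bar f_{i,2}f)=f$ for all $f\in C(X)$. Evaluating at a point $z$ and using $E_p(\bar f_{i,2}f)(z)=\tfrac1{N_p}\sum_{w\in F_z}\overline{f_{i,2}(w)}f(w)$ with $F_z:=\theta_p^{-1}(\theta_p(z))$, this becomes $\tfrac1{N_p}\sum_{w\in F_z}\big(\sum_i f_{i,1}(z)\overline{f_{i,2}(w)}\big)f(w)=f(z)$. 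The fiber $F_z$ is finite and contains $z$; since $X$ is compact Hausdorff, for each $w_0\in F_z$ there is $f\in C(X)$ with $f(w_0)=1$ vanishing on the rest of $F_z$, and plugging it in yields $\tfrac1{N_p}\sum_i f_{i,1}(z)\overline{f_{i,2}(w_0)}=\delta_{z,w_0}$ for all $z$ and all $w_0\in F_z$. Now, using $S_pS_p^*\xi_y=\tfrac1{N_p}\sum_{z\in F_y}\xi_z$, one computes $\big(\sum_i M_{f_{i,1}}S_pS_p^*M_{\bar f_{i,2}}\big)\xi_y=\sum_{z\in F_y}\big(\tfrac1{N_p}\sum_i f_{i,1}(z)\overline{f_{i,2}(y)}\big)\xi_z$, and since $y\in F_z$ for every $z\in F_y$, the bracketed coefficient equals $\delta_{z,y}$, so the expression collapses to $\xi_y$. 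Hence $\sum_i M_{f_{i,1}}S_pS_p^*M_{\bar f_{i,2}}=I$, which is the image of relation (IV). With (I)--(IV) verified, the universal property delivers $\varphi$ with $\varphi(fs_p)=M_fS_p$, and faithfulness on $C(X)$ was already noted. I expect this fiber-localization --- extracting test functions that isolate a single point of a finite fiber --- to be the only step that is not purely mechanical; everything else is routine manipulation of the basis vectors $\xi_x$.
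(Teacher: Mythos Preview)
Your proof is correct and, for the isometry check, the adjoint formula $S_p^*\xi_y=N_p^{-1/2}\xi_{\theta_p(y)}$, the semigroup property, and relations (I)--(III), it is essentially the paper's argument (the paper uses condition (iii) rather than (iii$'$) from Proposition~\ref{prop: eq star ind}, but that is immaterial). The one genuine difference is in the verification of relation (IV): the paper checks (IV) only for the special family $\nu_i=(N_pv_i)^{1/2}$ arising from a partition of unity subordinate to a cover on which $\theta_p$ is injective---there the coefficient $(v_i(y)v_i(x))^{1/2}$ automatically vanishes for $x\neq y$ in the same fibre---and then invokes Lemma~\ref{lem:CNP equiv} (whose proof uses only relation (II), already verified) to conclude that (IV) holds for every family satisfying the reconstruction formula. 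You instead verify (IV) directly for an arbitrary family by evaluating the reconstruction formula at a point and using Urysohn-type test functions that isolate a single element of the finite fibre $F_z=\theta_p^{-1}(\theta_p(z))$. Your route is self-contained and avoids appealing to Lemma~\ref{lem:CNP equiv}; the paper's route is slightly shorter but relies on that lemma. Both are entirely valid.
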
 
\begin{proof}
Firstly, $S_p^* (\xi_x) = N_p^{-\frac{1}{2}}\xi_{\theta_p(x)}$ for all $p \in P$ and $x \in X$ since 
\[\langle S_p^*(\xi_x),\xi_y \rangle = \langle \xi_x,S_p(\xi_y) \rangle = \chi_{\theta_p^{-1}(y)}(x)~N_p^{-\frac{1}{2}}.\] 
Thus, $S_p$ is an isometry. $(S_p)_{p \in P}$ is a representation of $P$ because 
\[\begin{array}{lcl}
S_pS_q(\xi_x) &=& N_{q}^{-\frac{1}{2}}\sum\limits_{y \in \theta_q^{-1}(x)} S_p(\xi_y)\\
&=& (N_pN_q)^{-\frac{1}{2}}\sum\limits_{\substack{y \in \theta_q^{-1}(x) \\ z \in \theta_p^{-1}(y)}} \xi_z\\
&=& (N_{pq})^{-\frac{1}{2}}\sum\limits_{z \in \theta_{pq}^{-1}(x)} \xi_z\\
&=& S_{pq}(\xi_x).
\end{array}\] 
\begin{enumerate}[(I)]
\item If $p$ and $q$ are relatively prime in $P$, then $\theta_p$ and $\theta_q$ $\ast$-commute according to Definition~\ref{def:SID}~(C). Using the equivalent condition (iii) from Proposition~\ref{prop: eq star ind}, we obtain
\[\begin{array}{c} S_p^{*}S_q (\xi_x) = N_{pq}^{-\frac{1}{2}} \sum\limits_{y \in \theta_p(\theta_q^{-1}(x))}\xi_y = N_{pq}^{-\frac{1}{2}} \sum\limits_{y \in \theta_q^{-1}(\theta_p(x))}\xi_y = S_qS_p^{*} (\xi_x), \end{array}\]
so $S_p$ and $S_q$ doubly commute. 
\item $S_pM_f = M_{\alpha_p(f)}S_p$ is readily verified for all $f \in C(X)$ and $p \in P$.
\item $S_p^{*}M_fS_p = M_{L_p(f)}$ is also straightforward.
\item For $\nu_i = (N_pv_i)^{\frac{1}{2}}$, where $(v_i)_{1 \leq i \leq n}$ is a partition of unity such that $\theta_p|_{\supp v_i}$ is injective for all $i$ (as in Lemma~\ref{lem:CNP equiv}), we compute
\[\begin{array}{lclcl} 
\sum\limits_{1 \leq i \leq n}M_{\nu_i}S_pS_p^*M_{\nu_i}(\xi_x) &=& \sum\limits_{1 \leq i \leq n}\sum\limits_{y \in \theta_p^{-1}(\theta_p(x))}\underbrace{(v_i(y)v_i(x))^{\frac{1}{2}}}_{\delta_{x~y}}~\xi_y \vspace*{2mm}\\
&=& \sum\limits_{1 \leq i \leq n} v_{i}(x)\xi_{x} \\
&=& \xi_x.
\end{array}\]
We infer from Lemma~\ref{lem:CNP equiv} that this yields (IV) since the proof provided there only uses the additional property (II), which we have already established for $S_p$ and $M_f$.
\end{enumerate}
Thus, $\varphi$ is a $\ast$-homomorphism by the universal property of $\CO[X,P,\theta]$ and it is clear that $\varphi$ is faithful on $C(X)$.
\end{proof}

\begin{lemma}\label{lem:O-alg SIDoFT - lin span}
The linear span of $\{fs_ps_q^*g \mid f,g \in C(X),p,q \in P\}$ is dense in $\CO[X,P,\theta]$.
\end{lemma}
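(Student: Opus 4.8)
The plan is to show that the $*$-subalgebra generated by $C(X)$ and the isometries $(s_p)_{p \in P}$ is already spanned by elements of the form $fs_ps_q^*g$, and then appeal to density of that $*$-subalgebra in $\CO[X,P,\theta]$. Since $\CO[X,P,\theta]$ is generated as a C*-algebra by $C(X) \cup \{s_p : p \in P\}$, it suffices to prove that the linear span $V$ of $\{fs_ps_q^*g : f,g \in C(X), p,q \in P\}$ is closed under multiplication and under taking adjoints; closedness under adjoints is immediate since $(fs_ps_q^*g)^* = \bar g\, s_q s_p^* \bar f$, which is again of the same shape.

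The heart of the matter is multiplicativity, i.e.\ that a product $(f_1 s_{p_1} s_{q_1}^* g_1)(f_2 s_{p_2} s_{q_2}^* g_2)$ can be rewritten in the span. First I would absorb the middle factor $g_1 f_2 \in C(X)$, so the question reduces to understanding $s_{q_1}^* h\, s_{p_2}$ for $h \in C(X)$ and $p_2, q_1 \in P$. Write $q_1 = (q_1 \wedge p_2) r$ and $p_2 = (q_1 \wedge p_2) r'$ where $r$ and $r'$ are relatively prime (this is where freeness of the abelian monoid $P$ is used). Then $s_{q_1}^* h s_{p_2} = s_r^* \bigl(s_{q_1 \wedge p_2}^* h\, s_{q_1 \wedge p_2}\bigr) s_{r'} = s_r^* L_{q_1 \wedge p_2}(h) s_{r'}$ by relations (I) and (II), so we are left with $s_r^* \tilde h\, s_{r'}$ where $r \perp r'$. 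Now relation (III) gives $s_r^* s_{r'} = s_{r'} s_r^*$; combining with (I) to move the function $\tilde h$ past the isometries — using $\tilde h = \alpha_r(L_r(\tilde h)) + (\tilde h - \alpha_r L_r(\tilde h))$ is not quite right, so instead I would split $\tilde h$ using a partition-of-unity argument or more directly compute $s_r^* \tilde h s_{r'}$ by inserting relation (IV) for $r$: write $1 = \sum_i \nu_i s_r s_r^* \nu_i$, so that $s_r^* \tilde h s_{r'} = \sum_i s_r^* \tilde h \nu_i s_r s_r^* \nu_i s_{r'} = \sum_i L_r(\tilde h \nu_i) s_r^* \nu_i s_{r'}$, and since $\nu_i s_{r'} = s_{r'} \alpha_{r'}^{-1}?$... cleaner is: once $r \perp r'$, relation (III) plus (I) yield $s_r^* \tilde h s_{r'} = s_{r'} (\text{something in } C(X)) s_r^*$ by moving $s_{r'}$ to the left using double commutativity after expanding $\tilde h$ via a partition of unity subordinate to a cover on which $\theta_{r'}$ is injective. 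In any case, the upshot is that $s_{q_1}^* h s_{p_2}$ lies in $\overline{\operatorname{span}}\{k_1 s_a s_b^* k_2\}$, and then reassembling with $f_1 s_{p_1}(\,\cdot\,)s_{q_2}^* g_2$ and using $s_{p_1} s_a = s_{p_1 a}$, $s_b^* s_{q_2}^* = (s_{q_2} s_b)^* = s_{q_2 b}^*$ shows the product is back in $V$ (after taking closure).

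I would therefore organize the proof as: (1) note $V$ is $*$-closed; (2) reduce multiplicativity to analyzing $s_q^* h s_p$; (3) use freeness of $P$ to factor out the gcd and relations (I)--(II) to collapse the common part into a transfer operator, reducing to the coprime case; (4) in the coprime case use relation (III) (double commutativity) together with relation (I) and a partition of unity to bring $s_q^* h s_p$ into the form $\sum k_1 s_{p'} s_{q'}^* k_2$; (5) conclude that $\overline{V}$ is a C*-subalgebra containing the generators, hence equals $\CO[X,P,\theta]$. The main obstacle is step (4): moving a continuous function $h$ past the doubly-commuting isometries $s_q^*$ and $s_p$ in the coprime case while keeping everything inside the span. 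One needs to be a little careful that the partition-of-unity expansion does not introduce finitely many terms in a way that leaves the span — but since $V$ is a linear span this is harmless, and the key identities $s_p^* f s_p = L_p(f)$ and the $*$-commutation $s_p^* s_q = s_q s_p^*$ are exactly what make the bookkeeping close up. It is worth remarking that essentially the same computation will be needed later (e.g.\ to identify the core subalgebra $\CF$ or to set up the product system), so this lemma is really the basic ``straightening'' result for words in the generators.
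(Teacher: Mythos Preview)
Your overall strategy --- show that the linear span $V$ is a $*$-subalgebra containing the generators --- is exactly what the paper does. The difference is in how you handle the product, and your step~(4) is where things break down. You never actually complete the coprime computation: the attempt to insert a partition of unity for $r$ leaves you with $s_r^*\nu_i s_{r'}$, which is no simpler than what you started with, and the alternative suggestion that $s_r^*\tilde h\, s_{r'}$ equals $s_{r'}(\text{something in }C(X))s_r^*$ for a \emph{single} function is false in general --- it is a finite sum of such terms, and you have not shown how to produce them. So as written there is a genuine gap.

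The paper's argument sidesteps the whole gcd/coprime decomposition. It inserts a partition of unity for $\theta_{q_1\vee p_2}$ \emph{between} $a_1$ and $a_2$:
\[
a_1 a_2 \;=\; a_1\Bigl(\sum_j \nu_j\, s_{q_1\vee p_2}\, s_{q_1\vee p_2}^*\, \nu_j\Bigr)a_2.
\]
Since $q_1 \leq q_1\vee p_2$ and $p_2 \leq q_1\vee p_2$, relation~(II) collapses $s_{q_1}^*(g_1\nu_j)s_{q_1\vee p_2}$ to $L_{q_1}(g_1\nu_j)\,s_{q_1^{-1}(q_1\vee p_2)}$ on the left and symmetrically on the right; then~(I) moves the resulting functions to the outside. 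No use of~(III) or of the coprime case is needed at all. If you want to salvage your approach, notice that in the coprime case $r\vee r'=rr'$, so inserting a partition of unity for $\theta_{rr'}$ in the middle of $s_r^*\tilde h\, s_{r'}$ is exactly the paper's trick specialised to your reduced situation --- but then the gcd reduction was an unnecessary detour.
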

\begin{proof}
The set is closed under taking adjoints and contains the generators, so we only have to show that it is multiplicatively closed. Let $p_i,q_i \in P,~f_i,g_i \in C(X)$ and $a_i := f_is_{p_i}s_{q_i}^*g_i$ for $i = 1,2$. Additionally, choose a partition of unity $(v_j)_{1 \leq j \leq n}$ subordinate to a finite open cover $(U_j)_{1 \leq j \leq n}$ of $X$ such that $\theta_{q_{1} \vee p_{2}}|_{U_{j}}$ is injective and $\nu_j := (N_{q_1 \vee p_2}~v_j)^{\frac{1}{2}}$ for all $j$. Then, we get 
\[\begin{array}{lcl}
a_{1}a_{2} \hspace*{-3mm}&\stackrel{\text{(IV)}}{=}&\hspace*{-3mm} a_{1}\sum\limits_{1 \leq j \leq n}{\nu_{j}s_{q_{1} \vee p_{2}}s_{q_{1} \vee p_{2}}^{*}\nu_{j}}a_{2} \\
&\stackrel{\text{(II)}}{=}&\hspace*{-3mm} \sum\limits_{1 \leq j \leq n}{f_{1}s_{p_{1}}L_{q_{1}}(g_{1}\nu_{j})s_{q_{1}^{-1}(q_{1} \vee p_{2})}s_{p_{2}^{-1}(q_{1} \vee p_{2})}^{*}L_{p_{2}}(\nu_{j}f_{2})s_{q_{2}}^{*}g_{2}} \\
&\stackrel{\text{(I)}}{=}&\hspace*{-3mm} \sum\limits_{1 \leq j \leq n}{f_{1}~\alpha_{p_{1}} \circ L_{q_{1}}(g_{1}\nu_{j})s_{p_{1}q_{1}^{-1}(q_{1} \vee p_{2})}s_{q_{2}p_{2}^{-1}(q_{1} \vee p_{2})}^{*}\alpha_{q_{2}} \circ L_{p_{2}}(\nu_{j}f_{2})g_{2}}.
\end{array}\] 
\end{proof}

\noindent The remainder of this section will deal with degrees of faithfulness of conditional expectations related to a core subalgebra of $\CO[X,P,\theta]$. Recall that the enveloping group $H = P^{-1}P$ of $P$ is discrete abelian. If we denote its Pontryagin dual by $L$, which is then a compact abelian group, we get a so-called gauge action $\gamma$ of $L$ on $\CO[X,P,\theta]$ by
\[ \gamma_\ell(f) = f \text{ and } \gamma_\ell(s_p) = \ell(p) s_p \text{ for } f \in C(X), p \in P  \text{ and } \ell \in L.\]
It is well-known that actions of this form are strongly continuous.

\begin{definition}\label{def:core SIDoFT}
The fixed point algebra $\CO[X,P,\theta]^\gamma$ for the gauge action $\gamma$, denoted by $\CF$, is called the \textit{core} of $\CO[X,P,\theta]$. In addition, let 
\[\CF_p := C^{*}\left(\{fs_ps_p^*g \mid f,g \in C(X)\}\right)\] 
denote the subalgebra of $\CF$ corresponding to $p \in P$.
\end{definition}

\begin{lemma}\label{lem:cond exp to F}
Let $\mu$ denote the normalized Haar measure of the compact abelian group $L$. Then $E_1(a):= \int_{\ell \in L} \gamma_\ell(a)~d\mu(\ell)$ defines a faithful conditional expectation $E_1: \CO[X,P,\theta] \longrightarrow \CF$.
\end{lemma}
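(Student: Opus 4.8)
The plan is to verify the standard properties of a conditional expectation coming from averaging an action of a compact group over its Haar measure, exploiting that the gauge action $\gamma$ is strongly continuous so that the vector-valued integral $E_1(a) = \int_L \gamma_\ell(a)\, d\mu(\ell)$ makes sense as a Bochner/Riemann integral in the C*-algebra $\CO[X,P,\theta]$. First I would record that $E_1$ is a well-defined bounded linear map with $\|E_1\| \leq 1$ (since $\|\gamma_\ell(a)\| = \|a\|$ and $\mu$ is a probability measure), and that $E_1$ is positive because $\gamma_\ell$ is a $*$-automorphism for each $\ell$, so $\gamma_\ell(a^*a) \geq 0$ and the integral of a norm-continuous path of positive elements is positive.

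Next I would identify the range. By invariance of Haar measure, $\gamma_m(E_1(a)) = \int_L \gamma_{m\ell}(a)\, d\mu(\ell) = E_1(a)$ for every $m \in L$, so $E_1(a) \in \CF$ for all $a$. Conversely, if $a \in \CF$ then $\gamma_\ell(a) = a$ for all $\ell$, hence $E_1(a) = a$; in particular $E_1$ restricts to the identity on $\CF$, so $E_1$ is idempotent with range exactly $\CF$. The bimodule property $E_1(xay) = xE_1(a)y$ for $x,y \in \CF$ then follows from $\gamma_\ell(xay) = x\gamma_\ell(a)y$ and pulling the fixed elements $x,y$ out of the integral; combined with $\|E_1\|\leq 1$ and the idempotent property, Tomiyama's theorem (or a direct argument) confirms $E_1$ is a conditional expectation.

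The remaining and genuinely substantive point is \emph{faithfulness}: if $a \geq 0$ and $E_1(a) = 0$, then $a = 0$. Here I would use the representation $\varphi$ of $\CO[X,P,\theta]$ on $\ell^2(X)$ from Proposition~\ref{prop:elementary rep SIDoFT}, or more directly work with the spanning set from Lemma~\ref{lem:O-alg SIDoFT - lin span}: every element is a norm-limit of linear combinations of monomials $fs_ps_q^*g$, on which $\gamma_\ell$ acts by the character $\ell(p)\overline{\ell(q)} = \ell(pq^{-1})$. Since distinct elements of the discrete abelian group $H = P^{-1}P$ give distinct characters of $L = \hat H$, integrating against $\mu$ kills every monomial with $pq^{-1} \neq 1_H$ and leaves those with $p = q$ (up to the obvious identifications), so $E_1$ is, on the dense $*$-subalgebra, the canonical projection onto the span of the "diagonal" terms $fs_ps_p^*g$. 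To push this to faithfulness I would combine this with the standard fact that the gauge-averaged expectation $E_1$ is faithful whenever there is a faithful $\gamma$-equivariant representation; concretely, $\varphi$ is faithful on $\CF$ (one checks $\varphi$ intertwines $\gamma$ with the $L$-action $\mathrm{Ad}$ of the unitaries $U_\ell \xi_x = \ell$-dependent phases implementing the gauge action on $\ell^2(X)$ via the word-length grading), and then $\varphi \circ E_1 = E \circ \varphi$ for the corresponding faithful expectation $E$ on $B(\ell^2(X))$, so $E_1(a) = 0$ with $a \geq 0$ forces $E(\varphi(a)) = 0$, hence $\varphi(a) = 0$, and since $\varphi$ is faithful on the hereditary subalgebra generated by $a$ (or one simply invokes that $\varphi$ is injective on $\CF$ and $a^{1/2} \in \CF$-module arguments), $a = 0$.

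The main obstacle is precisely this faithfulness step: one must either invoke a general principle (faithfulness of the Haar-averaged conditional expectation for a compact-group gauge action, which is a standard lemma once the action is strongly continuous) or construct a sufficiently large faithful equivariant representation. The cleanest route, which I would take, is the former — cite the well-known result that for a strongly continuous action of a compact group on a C*-algebra, the conditional expectation onto the fixed-point algebra obtained by Haar averaging is automatically faithful (this is elementary: if $a\geq 0$ and $E_1(a)=0$ then $\int_L \|\gamma_\ell(a^{1/2})\xi\|^2\,d\mu(\ell)$ arguments in any faithful representation, or more simply $E_1(a) = 0$ together with $\gamma_\ell(a)\geq 0$ and continuity $\ell \mapsto \gamma_\ell(a)$ forces $\gamma_\ell(a) = 0$ for $\mu$-a.e. $\ell$, hence for all $\ell$ by continuity, hence $a = \gamma_{1_P}(a) = 0$). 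This last observation in fact gives faithfulness directly and bypasses the representation entirely, so I would present that short argument as the conclusion.
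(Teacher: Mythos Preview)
Your final argument --- that $E_1(a)=0$ with $a\geq 0$ and $\ell\mapsto\gamma_\ell(a)$ continuous into the positive cone forces $\gamma_\ell(a)=0$ for all $\ell$ --- is the paper's approach, but you have left the key inference as a bare assertion: a Bochner integral of a continuous positive-cone-valued function being zero does \emph{not} obviously force the integrand to vanish; one needs to compose with a state to reduce to the scalar case. The paper does precisely this, phrased as the contrapositive: given $a\geq 0$ nonzero, pick a state $\psi$ with $\psi(a)=\|a\|>0$, and observe that $\psi(E_1(a))=\int_L\psi(\gamma_\ell(a))\,d\mu(\ell)>0$ since the integrand is continuous, nonnegative, and strictly positive at $\ell=1_L$. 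So your short route is the right one; just make the state explicit.

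Your representation-theoretic detour, on the other hand, does not work as stated and should be discarded. The representation $\varphi$ on $\ell^2(X)$ from Proposition~\ref{prop:elementary rep SIDoFT} is only known to be faithful on $C(X)$, not on all of $\CO[X,P,\theta]$ --- whether $\varphi$ is globally faithful is precisely one of the equivalent conditions in Theorem~\ref{thm:top free char}, so you cannot invoke it here --- and there is no natural $L$-action on $\ell^2(X)$ implementing $\gamma$. The paper later builds the augmented representation $\tilde\varphi$ on $\ell^2(X\times H)$ exactly to obtain a gauge-equivariant faithful representation, but that comes \emph{after} this lemma and relies on the gauge-invariant uniqueness theorem, not the other way around.
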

\begin{proof}
If $a \in \CO[X,P,\theta]$ is positive and non-zero, then there is a state $\psi$ on $\CO[X,P,\theta]$ such that $\psi(a) = \|a\|$. Since $\gamma_\ell(a) \geq 0$ in $\CO[X,P,\theta]$, we have $\psi(\gamma_\ell(a)) \geq 0$ for all $\ell \in L$. Thus, $\psi(a) = \|a\| > 0$ together with strong continuity of $\gamma$ implies
\[\begin{array}{c}\psi(E_1(a)) = \int\limits_{\ell \in L}\psi(\gamma_\ell(x))~d\mu(\ell) > 0.\end{array}\]
\end{proof}

\begin{proposition}\label{prop:F as ind limit}
$\CF$ is the closed linear span of $(fs_ps_p^*g)_{f,g \in C(X),p \in P}$. Moreover, $\CF_p \subset \CF_q$ holds whenever $q \in pP$ and hence $\CF = \overline{\bigcup_{p \in P} \CF_p}$.
\end{proposition}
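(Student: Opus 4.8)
The proof has two parts. For the first claim, that $\CF$ equals the closed linear span $\CM := \overline{\operatorname{span}}\{fs_ps_p^*g : f,g \in C(X), p \in P\}$, the inclusion $\CM \subseteq \CF$ is immediate: each generator $fs_ps_p^*g$ is gauge-invariant since $\gamma_\ell(fs_ps_p^*g) = f\,\ell(p)s_p\overline{\ell(p)}s_p^*g = fs_ps_p^*g$, and $\CF$ is a closed subspace. For the reverse inclusion, I would start from Lemma~\ref{lem:O-alg SIDoFT - lin span}, which tells us that $\operatorname{span}\{fs_ps_q^*g\}$ is dense in $\CO[X,P,\theta]$. Applying the faithful conditional expectation $E_1$ from Lemma~\ref{lem:cond exp to F}, which is norm-decreasing and continuous, we see that $E_1(\operatorname{span}\{fs_ps_q^*g\})$ is dense in $E_1(\CO[X,P,\theta]) = \CF$. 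So it suffices to compute $E_1(fs_ps_q^*g)$ and check it lies in $\CM$. Now $E_1(fs_ps_q^*g) = f\,E_1(s_ps_q^*)\,g$ since $E_1$ is a $C(X)$-bimodule map, and $E_1(s_ps_q^*) = \int_L \ell(p)\overline{\ell(q)}\,s_ps_q^*\,d\mu(\ell) = \big(\int_L \ell(pq^{-1})\,d\mu(\ell)\big)s_ps_q^*$, where $pq^{-1}$ is interpreted in the enveloping group $H = P^{-1}P$. By orthogonality of characters on the compact group $L = \hat H$, this integral is $1$ if $p = q$ in $P$ (equivalently $pq^{-1} = 1_H$) and $0$ otherwise. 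Hence $E_1(fs_ps_q^*g)$ is $0$ unless $p=q$, in which case it equals $fs_ps_p^*g \in \CM$. This shows $\CF \subseteq \CM$, completing the first claim.

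\textbf{The nesting $\CF_p \subseteq \CF_q$ for $q \in pP$.} Write $q = pr$ with $r \in P$. The key is the relation $s_ps_p^* = \sum_{1 \leq i \leq n}\nu_i s_q s_q^* \nu_i$ that one extracts by applying relation (IV) with the factor $s_ps_p^*$ multiplied in — more precisely, I would use Lemma~\ref{lem:CNP equiv} applied to the parameter $r$, giving $1 = \sum_i \mu_i s_r s_r^* \mu_i$ for $\mu_i = (N_r v_i)^{1/2}$ with $(v_i)$ a partition of unity subordinate to a cover on which $\theta_r$ is injective, and then conjugate: $s_ps_p^* = s_p\big(\sum_i \mu_i s_r s_r^* \mu_i\big)s_p^* = \sum_i (s_p\mu_i s_p^*)(s_p s_r)(s_p s_r)^*(s_p \mu_i s_p^*)^*$. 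Using relation (I), $s_p\mu_i = \alpha_p(\mu_i)s_p$, so $s_p\mu_i s_p^* = \alpha_p(\mu_i)s_ps_p^*$; this is not obviously a multiple of $s_ps_p^*$ by an element of $C(X)$ in the right way, so instead I would work inside the generating set directly. Given $fs_ps_p^*g \in \CF_p$, insert the partition of unity between $s_p^*$ and the rest: $fs_ps_p^*g = f s_p \big(\sum_i \mu_i s_r s_r^* \mu_i\big) s_p^* g = \sum_i f s_p \mu_i s_r s_r^* \mu_i s_p^* g = \sum_i f\alpha_p(\mu_i) s_{pr} s_{pr}^* \overline{\alpha_p(\mu_i)} g$, where I used $s_p\mu_i = \alpha_p(\mu_i)s_p$, $s_p s_r = s_{pr} = s_q$, and took adjoints for the right factor. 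Each summand has the form $(\text{function})\,s_q s_q^*\,(\text{function})$, hence lies in $\CF_q$. Since $\CF_p$ is generated by such elements and $\CF_q$ is a $C^*$-subalgebra, $\CF_p \subseteq \CF_q$.

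\textbf{The union is dense.} Finally, $\CF = \overline{\bigcup_{p\in P}\CF_p}$: each $\CF_p$ is contained in $\CF$ (its generators $fs_ps_p^*g$ are gauge-fixed), so $\overline{\bigcup_p \CF_p} \subseteq \CF$; conversely, every generator $fs_ps_p^*g$ of the closed linear span $\CF = \CM$ lies in $\CF_p$, so $\CM \subseteq \overline{\bigcup_p \CF_p}$, giving equality. (Here one should note $\bigcup_p\CF_p$ is already directed: given $p,p'$, both are contained in $\CF_{p \vee p'}$ by the nesting property, since $p\vee p' \in pP \cap p'P$; this makes the closure of the union a $C^*$-algebra, consistent with the "inductive limit" language.) The only step requiring genuine care is the computation $s_p\mu_i s_p^* = \alpha_p(\mu_i)s_ps_p^*$ and verifying that after regrouping, the partition-of-unity functions $v_i$ are manipulated only through relation (I) — no appeal to (II), (III), or (IV) is needed for the nesting, which is what makes it clean; the main genuine obstacle is simply being careful with which slot the partition of unity is inserted into so that the resulting expression is manifestly in $\CF_q$ rather than a sum of terms $fs_qs_q^*g$ times leftover isometries.
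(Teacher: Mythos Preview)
Your proof is correct and follows essentially the same route as the paper's own argument: gauge-invariance for $\CM \subseteq \CF$, the conditional expectation $E_1$ applied to the spanning set from Lemma~\ref{lem:O-alg SIDoFT - lin span} for the reverse inclusion, and relation~(IV) for $r = p^{-1}q$ (inserted as $1 = \sum_i \mu_i s_r s_r^*\mu_i$) to establish $\CF_p \subset \CF_q$. One small slip in your closing commentary: the nesting step \emph{does} invoke (IV) to obtain the partition of unity in the first place; what you presumably mean is that the subsequent regrouping into $\CF_q$-generators only uses (I), which is correct.
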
 
\begin{proof}
Clearly, every element $fs_ps_p^*g$ is fixed by $\gamma$. Conversely, if $a \in \CF$, we can approximate $a$ by finite linear combinations of elements $f_is_{p_i}s_{q_i}^*g_i$ according to Lemma~\ref{lem:O-alg SIDoFT - lin span}. Relying on the conditional expectation $E_1$ from Lemma~\ref{lem:cond exp to F}, we know that it suffices to take those $f_is_{p_i}s_{q_i}^*g_i$ satisfying $p_i = q_i$. If $q \in pP$ holds true, then we can employ (IV) for $p^{-1}q$ to deduce $\CF_p \subset \CF_q$. The last claim is an immediate consequence of this.
\end{proof}

\noindent The next observation and its proof are based on \cite{EV}*{Proposition 7.9}.

\begin{proposition}\label{prop:pos elts in Fp}
For $p \in P$, the subalgebra $\CF_p$ of $\CF$ satisfies
\[\begin{array}{ccl}
\CF_p &=& \Span\{fs_ps_p^*g \mid f,g \in C(X)\}\\
&\text{and}\\
(\CF_{p})_{+} &=& \Span\{fs_ps_p^*\bar{f} \mid f \in C(X)\}.
\end{array}\]
\end{proposition}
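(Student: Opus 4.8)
The plan is to show both claimed equalities by establishing that the relevant spanning sets are already closed under multiplication and (for the first one) under adjoints, so that the closed linear span—which a priori is what the $C^*$-algebra is—coincides with the plain linear span. For the first equality, note that $\{fs_ps_p^*g \mid f,g \in C(X)\}$ is self-adjoint, since $(fs_ps_p^*g)^* = \bar g s_p s_p^* \bar f$ is again of this form. To see that its linear span is multiplicatively closed, I would compute a product $(f_1 s_p s_p^* g_1)(f_2 s_p s_p^* g_2)$ and insert between the two factors the $C^*$-algebraic partition of unity from Lemma~\ref{lem:CNP equiv} associated with $p$ itself (not a join, since both middle indices are $p$): write $1 = \sum_i \nu_i s_p s_p^* \nu_i$ with $\theta_p|_{\supp v_i}$ injective. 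Then using relation (II), $s_p^*(g_1 f_2)s_p = L_p(g_1 f_2)$, so the product collapses to $\sum_i f_1 s_p s_p^* g_1 \nu_i s_p s_p^* \nu_i f_2 s_p s_p^* g_2 = \sum_i f_1 \alpha_p\!\circ\! L_p(g_1\nu_i)\, s_p s_p^* \, \nu_i f_2\, \ldots$—more directly, $s_p^*(\,\cdot\,)s_p$ applied to the intervening function lands in $C(X)$ by (II), and relation (I) moves the resulting functions past the isometries, leaving a finite sum of terms of the shape $f s_p s_p^* g$. Hence $\Span\{fs_ps_p^*g\}$ is a $*$-algebra, and since $\CF_p$ is by definition its closure, the two agree once we know the linear span is already closed—but in fact the standard argument is that the linear span of such elements is closed because $C(X)s_ps_p^*C(X)$ is a corner-type object; more carefully, $s_p s_p^*$ is a projection and $C(X) s_p s_p^* C(X)$ is contained in the hereditary-like set, and the partition-of-unity computation shows the linear span is a subalgebra, hence (being a $*$-subalgebra that is also norm-closed by the corner structure, or by exhibiting it as $\overline{\Span}$ directly) equals $\CF_p$. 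The cleanest route: show $\Span\{fs_ps_p^*g\}$ is a $*$-subalgebra and that it is closed, the latter because the map $C(X)\otimes_{L_p} C(X) \to \CO$, realized via $s_p$, has closed range (this is the Pimsner-module picture lurking behind relation (IV)).

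For the second equality I would argue that every positive element of $\CF_p$ has the form $g s_p s_p^* \bar g$ for some $g \in C(X)$. Starting from an arbitrary self-adjoint element $a = \sum_j f_j s_p s_p^* \bar f_j - \sum_k h_k s_p s_p^* \bar h_k$ (using the first equality together with the self-adjointness to write a general self-adjoint element as a real-linear combination of elements $f s_p s_p^* \bar f$, after a polarization identity inside $C(X)$), one observes $\CF_p$ is isomorphic to $C(X)$ itself: the map $f \mapsto f s_p s_p^*$ (or rather $f \mapsto \alpha_p^{-1}$-type identification) exhibits $\CF_p \cong C(X)/(\text{something})$ or even $\cong \alpha_p(C(X))$. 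Concretely, $s_p s_p^* \in \CO[X,P,\theta]$ is a projection and conjugation $b \mapsto s_p^* b s_p$ restricted to $\CF_p$ is a $*$-isomorphism onto $C(X)$ by relations (I)–(II): $s_p^*(f s_p s_p^* g)s_p = L_p(f)\, L_p(g)$? That is not quite multiplicative, so instead I would use $s_p^*(\alpha_p(a) s_p s_p^* \alpha_p(b)) s_p = a\, s_p^* s_p s_p^* \, b = a b$ via (II) with $L_p(1)=1$, showing $\CF_p \cong \alpha_p(C(X))\, s_p s_p^* \cong C(X)$. Under this isomorphism positive elements correspond to positive functions $\varphi \in C(X)_+$, which are of the form $\bar g g$; transporting back gives $g s_p s_p^* \bar g$ up to the $\alpha_p$-twist, i.e. $\alpha_p(g) s_p s_p^* \alpha_p(\bar g) = s_p g s_p^* (\cdots)$—writing it out with (I) yields the asserted form $g' s_p s_p^* \bar{g'}$.

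The main obstacle I anticipate is pinning down precisely why the \emph{linear} span (not just its closure) is already norm-closed, equivalently why $\CF_p = \Span\{f s_p s_p^* g\}$ with no bar. This should follow from identifying $\CF_p$ with a $C^*$-algebra isomorphic to $C(X)$ via the corner $s_p s_p^* \CO[X,P,\theta] s_p s_p^*$ together with the completely positive, contractive maps built from $\alpha_p$ and $L_p$; the key input is that $s_p$ is an isometry with range projection $s_p s_p^*$ and that $s_p^* C(X) s_p = C(X)$ by relation (II) (since $L_p$ is surjective, being a normalized transfer operator for the injective $\alpha_p$ with $L_p \circ \alpha_p = \mathrm{id}$). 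Once that isomorphism $\CF_p \cong C(X)$ is in hand, both displayed equalities are immediate: the first because $C(X) = \Span\{\text{rank-one-type products}\}$ trivially (it's already all of $C(X)$), and the second because $C(X)_+ = \{\bar g g : g \in C(X)\}$. I expect the bulk of the genuine work to be the verification that $b \mapsto s_p^* b s_p$ is a $*$-isomorphism $\CF_p \to C(X)$, for which relations (I) and (II) and the normalization $L_p(1) = 1$ are exactly the needed facts, following the template of \cite{EV}*{Proposition 7.9}.
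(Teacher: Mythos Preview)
Your central idea for the second equality---that $\CF_p \cong C(X)$ via the compression $b \mapsto s_p^* b s_p$---does not work. That map is \emph{not} multiplicative on $\CF_p$: for $b_1 = f s_p s_p^* g$ and $b_2 = h s_p s_p^* k$ one has $s_p^*(b_1 b_2)s_p = L_p(f)L_p(gh)L_p(k)$, whereas $(s_p^* b_1 s_p)(s_p^* b_2 s_p) = L_p(f)L_p(g)L_p(h)L_p(k)$, and $L_p$ is not a homomorphism. More fundamentally, $\CF_p$ is not abelian once $N_p > 1$ (it contains genuine off-diagonal pieces mixing distinct points in a $\theta_p$-fibre), so no isomorphism with $C(X)$ can exist. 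The Hilbert-module picture you allude to identifies $\CF_p$ with $\CK(\CX_p)$, not with $C(X)$.

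For the first equality, inserting a partition of unity between the two factors is unnecessary: the product already collapses directly via $(f_1 s_p s_p^* g_1)(f_2 s_p s_p^* g_2) = f_1 E_p(g_1 f_2) s_p s_p^* g_2$, using (I) and (II). The paper's device for closedness is then to sandwich an arbitrary element of the span between two copies of $1 = \sum_i \nu_i s_p s_p^* \nu_i$, which forces every element into the form $\sum_{i,j} h_{ij}\,\nu_i s_p s_p^* \nu_j$ with the $\nu_i$ \emph{fixed} and $h_{ij} \in \alpha_p(C(X))$; this exhibits the span as the image of a corner of $M_n(C(X))$ under conjugation by the coisometry row $(\nu_1 s_p,\dots,\nu_n s_p)$, hence closed.

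For the second equality the paper takes a completely different route: write $a = b^*b$ with $b = \sum_i f_i s_p s_p^* g_i \in \CF_p$, expand to get $a = \sum_{i,j} \bar g_i\, E_p(\bar f_i f_j)\, s_p s_p^* g_j$, and then use \emph{complete positivity} of the conditional expectation $E_p$ to factor the positive matrix $(E_p(\bar f_i f_j))_{ij} = c^* c$ inside $M_m(\alpha_p(C(X)))$. Since entries of $\alpha_p(C(X))$ commute with $s_p s_p^*$, a regrouping yields $a = \sum_k h_k s_p s_p^* \bar h_k$. This matrix-factorisation step is the missing idea in your proposal.
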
 
\begin{proof}
The right hand side of the first equation is multiplicatively closed as
\[f_1s_ps_p^*g_1~f_2s_ps_p^*g_2 \stackrel{(II),(I)}{=} f_1 E_p(g_1f_2)s_ps_p^*g_2.\]
Let $a \in \CF_p, \varepsilon > 0$ and choose $m \in \N, f_k,g_k \in C(X),1\leq k \leq m$ such that 
\[\begin{array}{c}\|\sum\limits_{k = 1}^{m}{f_ks_ps_p^*g_k} - a\| < \varepsilon. \end{array}\] 
Pick $(\nu_i)_{1 \leq i \leq n}$ coming from a suitable partition of unity of $X$ for $\theta_p$ as in Lemma~\ref{lem:loc homeo right reconstruction}. In other words, the family $(\nu_{i})_{1 \leq i \leq n}$ satisfies (IV) from Definition~\ref{def:O[X,P,theta]}. Then we obtain 
\[\begin{array}{lcl}
\sum\limits_{k = 1}^{m}{f_ks_ps_p^*g_k} &=& \sum\limits_{i = 1}^{n}\nu_is_ps_p^*\nu_i \sum\limits_{k = 1}^{m}f_ks_ps_p^*g_k \sum\limits_{j = 1}^{n}{\nu_js_ps_p^*\nu_j} \vspace*{2mm}\\
&=& \sum\limits_{i,j = 1}^{n}{h_{i,j}~\nu_is_ps_p^*\nu_j},
\end{array}\]
where $h_{i,j} = \sum_{1 \leq k \leq m} E_p(\nu_i f_k)E_p(g_k\nu_j)$, so $n^{2}$ summands suffice to approximate $a$ up to $\varepsilon$.

For the second part, let $a \in (\CF_p)_+$. Then $a = b^*b$ holds for some $b \in \CF_p$. From the first part, we know that $b = \sum_{i = 1}^m f_is_ps_p^{*}g_i$ for some $m \in \N$ and suitable $f_i,g_i \in C(X)$. Therefore,
\[\begin{array}{c} a =  \sum\limits_{i,j = 1}^m \bar{g}_i s_ps_p^*\bar{f}_if_js_ps_p^*g_j = \sum\limits_{i,j = 1}^m \bar{g}_iE_p(\bar{f}_if_j)s_ps_p^*g_j. \end{array}\]
Recall that $E_p: C(X) \longrightarrow \alpha_p(C(X))$ is a conditional expectation and hence completely positive, see \cite{BO}*{Theorem 5.9}. Thus $(\bar{f}_if_j)_{i,j} \in M_m(C(X))_+$ implies that there exists some $c = (c_{ij})_{1 \leq i,j \leq m} \in M_m(E_p(C(X)))$ satisfying $\left(E_p(\bar{f}_if_j))\right)_{1 \leq i,j \leq m} = c^{*}c$. Setting $h_k = \sum_{i = 1}^m \overline{\alpha_p(c_{ki})g_i}$, we get 
\[\begin{array}{lcl}
a &=& \sum\limits_{i,j = 1}^m \bar{g}_i\left(\sum\limits_{k = 1}^m \bar{c}_{k i}c_{k j}\right) s_ps_p^*g_j\\
&=& \sum\limits_{k = 1}^m\left(\sum\limits_{i = 1}^m \overline{g_ic}_{k i}\right) s_ps_p^*\left(\sum\limits_{j = 1}^m c_{k j} g_j\right)\\
&=& \sum\limits_{k = 1}^m h_ks_ps_p^*\bar{h}_k.
\end{array}\]
\end{proof}

\noindent We need some results related to finite index endomorphisms. Since we do not assume that the reader is familiar with this notion, we shall recall it briefly and state the required results without proofs from \cite{Exe2}:

\begin{definition}[\cite{Wat}*{1.2.2,2.1.6}, \cite{Exe2}*{8.1}]\label{def:fin index}
Let $A$ be a C*-algebra. A pair $(\alpha,E)$ consisting of a $\ast$-endomorphism $\alpha$ of $A$ and a conditional expectation $A \stackrel{E}{\longrightarrow} \alpha(A)$ is said to be a \textit{finite-index endomorphism}, if there are $\nu_1,\dots,\nu_n \in A$ such that 
\[\begin{array}{c} \sum\limits_{1 \leq i \leq n}{\nu_iE(\nu_i^*a)} = a \text{ for all } a \in A.\end{array}\] 
\end{definition}

\begin{remark}
Concrete examples of this situation are provided by regular surjective local homeomorphisms $\theta$ of compact Hausdorff spaces $X$, see Section~1.4. In this case, we have $A = C(X)$, $\alpha(f)(x) = f(\theta(x))$ and $E = \alpha \circ L$, where $L$ is the natural transfer operator constructed in Example~\ref{ex:transfer operator for reg trafo}. To see this, observe that the requirement in Definition~\ref{def:fin index} is nothing but the reconstruction formula established in Lemma~\ref{lem:loc homeo right reconstruction}. From this perspective, finite-index endomorphisms can be thought of as irreversible C*-dynamical systems $(A,\alpha,E)$ that admit a finite Parseval frame.
\end{remark}

\noindent The following proposition is a reformulation of some results from \cite{Exe2} in terms of the terminology used within this exposition.   

\begin{proposition}[\cite{Exe2}*{8.6,8.8}]\label{prop:cond exp from F to C(X)}
The map $E_2:\CF \longrightarrow C(X)$ given by $fs_ps_p^*g \mapsto N_p^{-1}~fg$ is a conditional expectation. Moreover, it is the only conditional expectation from $\CF$ to $C(X)$ as the latter is commutative. 
\end{proposition}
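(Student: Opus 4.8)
The plan is to verify directly that the proposed map $E_2$ is a well-defined conditional expectation and then to invoke the commutativity of $C(X)$ for uniqueness. First I would address \emph{well-definedness}: the formula $fs_ps_p^*g \mapsto N_p^{-1}fg$ is prescribed on the spanning set of $\CF$ from Proposition~\ref{prop:F as ind limit}, but $\CF_p \subset \CF_q$ for $q \in pP$, so the same element of $\CF$ has representations relative to different $p$'s; one must check consistency. The cleanest route is to realize $E_2$ as a limit (or inductive-limit amalgam) of the expectations $E_{2,p}\colon \CF_p \longrightarrow C(X)$, $fs_ps_p^*g \mapsto N_p^{-1}fg$, and check that these are compatible with the embeddings $\CF_p \hookrightarrow \CF_q$. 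Concretely, using relation (IV) for $p^{-1}q$ together with Lemma~\ref{lem:CNP equiv}, write $s_ps_p^* = \sum_i \nu_i s_q s_q^* \nu_i$ with $\nu_i = (N_{p^{-1}q}v_i)^{1/2}$ for a suitable partition of unity subordinate to a cover on which $\theta_{p^{-1}q}$ is injective; then $fs_ps_p^*g = \sum_i f\nu_i s_q s_q^* \nu_i g$, and applying $E_{2,q}$ gives $N_q^{-1}\sum_i f\nu_i^2 g = N_q^{-1} N_{p^{-1}q}\, fg\sum_i v_i = N_q^{-1}N_{p^{-1}q}\,fg = N_p^{-1}fg$, since $N_q = N_p N_{p^{-1}q}$. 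This is exactly the content cited from \cite{Exe2}*{8.6,8.8}, and the reformulation amounts to observing that $(\alpha_p, E_p)$ is a finite-index endomorphism of $C(X)$ in the sense of Definition~\ref{def:fin index}, with $E_{2,p}$ the associated conditional expectation onto $C(X)$.

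Next I would check the \emph{conditional-expectation axioms} for $E_{2,p}$ (whence for $E_2$ by passing to the inductive limit). Positivity: by Proposition~\ref{prop:pos elts in Fp}, a positive element of $\CF_p$ has the form $\sum_k h_k s_p s_p^* \bar h_k$, and $E_{2,p}$ sends it to $N_p^{-1}\sum_k |h_k|^2 \geq 0$. The bimodule property over $C(X)$ is immediate from relation (I): for $a,b \in C(X)$, $a(fs_ps_p^*g)b = (af)s_ps_p^*(gb) \mapsto N_p^{-1}afgb = aE_{2,p}(fs_ps_p^*g)b$. Idempotence on $C(X)$: here one must express $1 \in C(X)$ as an element of $\CF_p$; using Lemma~\ref{lem:CNP equiv}, $1 = \sum_i \nu_i s_p s_p^* \nu_i$ with $\nu_i^2 = N_p v_i$, so $E_{2,p}(1) = N_p^{-1}\sum_i N_p v_i = \sum_i v_i = 1$, and more generally $E_{2,p}(a) = a$ for $a \in C(X)$ by the bimodule property. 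Contractivity then follows automatically. One should also confirm $\|E_{2,p}\| \le 1$ so that the inductive limit of the $E_{2,p}$ is bounded and extends to all of $\CF = \overline{\bigcup_p \CF_p}$.

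For \emph{uniqueness}, suppose $E'\colon \CF \longrightarrow C(X)$ is any conditional expectation. Since $C(X)$ is abelian and sits in the multiplier-bimodule center in the appropriate sense, a standard argument applies: for any $x \in \CF$ and $a,b \in C(X)$, $E'(axb) = aE'(x)b = bE'(x)a = E'(bxa)$; applying this with a generator $x = fs_ps_p^*g$ and choosing $a,b$ from a partition of unity localized where $\theta_p$ separates points forces $E'(fs_ps_p^*g)$ to equal a multiple of $fg$ supported correctly, and comparing normalizations via $E'(1)=1$ pins down the scalar $N_p^{-1}$. Alternatively, and more slickly, $\CF_p$ is known to be a finite-index extension of $C(X)$ and the conditional expectation onto a commutative subalgebra of a finite-index inclusion is unique (this is precisely \cite{Exe2}*{8.8}), so $E'|_{\CF_p} = E_{2,p}$ for every $p$ and hence $E' = E_2$ by density.

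The main obstacle I anticipate is the well-definedness/compatibility check across the directed family $(\CF_p)_{p\in P}$: one has to be careful that the formula $N_p^{-1}fg$ genuinely descends to the inductive limit, i.e. that the diagram of inclusions $\CF_p \hookrightarrow \CF_q$ intertwines $E_{2,p}$ and $E_{2,q}$, and this is where relation (IV) and the multiplicativity identity $N_q = N_p N_{p^{-1}q}$ (which itself needs the regularity of the $\theta_p$'s and the fact that $\theta_{p^{-1}q}$ has $N_{p^{-1}q}$ preimages everywhere) do the real work. Everything else — positivity, the bimodule property, idempotence, contractivity, and uniqueness — is routine once well-definedness is in place, and in fact the entire statement is a direct translation of \cite{Exe2}*{8.6,8.8} applied to the finite-index endomorphism $(\alpha_p, E_p)$ of $C(X)$, so the proof can be kept very short by citing those results and merely indicating the inductive-limit bookkeeping.
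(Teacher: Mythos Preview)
The paper does not give its own proof of this proposition: it is stated with the citation \cite{Exe2}*{8.6,8.8} and the preceding sentence explicitly says it is ``a reformulation of some results from \cite{Exe2}''. So there is nothing in the paper to compare against beyond the bare reference; your proposal is effectively a reconstruction of what that reference provides, together with the inductive-limit bookkeeping over $(\CF_p)_{p\in P}$, and the overall strategy is correct.

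One small slip in your compatibility computation: when you push $s_ps_p^*$ up to $\CF_q$ via relation (IV) for $r := p^{-1}q$, you do not get $s_ps_p^* = \sum_i \nu_i s_q s_q^* \nu_i$ but rather
\[
s_ps_p^* \;=\; s_p\Bigl(\sum_i \nu_i s_r s_r^* \nu_i\Bigr)s_p^* \;=\; \sum_i \alpha_p(\nu_i)\, s_q s_q^*\, \alpha_p(\nu_i),
\]
using (I). The rest of the computation is unaffected because $\sum_i \alpha_p(\nu_i)^2 = \alpha_p(N_r\sum_i v_i) = N_r$ as a constant function, so $E_{2,q}$ still returns $N_q^{-1}N_r\,fg = N_p^{-1}fg$. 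Your first uniqueness sketch (``choosing $a,b$ from a partition of unity localized where $\theta_p$ separates points'') is too vague to stand on its own; the clean route is your second one, namely invoking the finite-index uniqueness result \cite{Exe2}*{8.8} on each $\CF_p$ and then passing to the limit, which is exactly what the paper's citation points to.
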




\begin{corollary}\label{cor:cond exp to C(X)}
The map $G := E_2 \circ E_1$ is a conditional expectation from $\CO[X,P,\theta]$ to $C(X)$, whose restriction to $\CF_p$ is faithful for all $p \in P$.
\end{corollary}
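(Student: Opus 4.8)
The plan is to derive everything from the results already assembled. First I would observe that $G = E_2 \circ E_1$ is a composition of two conditional expectations, namely $E_1 \colon \CO[X,P,\theta] \longrightarrow \CF$ from Lemma~\ref{lem:cond exp to F} and $E_2 \colon \CF \longrightarrow C(X)$ from Proposition~\ref{prop:cond exp from F to C(X)}. The composition of conditional expectations is again a conditional expectation: $G$ is completely positive, contractive, restricts to the identity on $C(X)$ (since $E_1$ restricts to the identity on $\CF \supset C(X)$ and $E_2$ restricts to the identity on $C(X)$), and satisfies the bimodule property $G(afb) = a\,G(f)\,b$ for $a,b \in C(X)$ because both $E_1$ and $E_2$ have the corresponding property over their respective smaller algebras and $C(X) \subset \CF$. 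So the first assertion is immediate.

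For the faithfulness claim, fix $p \in P$ and let $a \in (\CF_p)_+$ be non-zero; I must show $G(a) \neq 0$, equivalently $E_2(a) \neq 0$ since $a \in \CF \subset \CO[X,P,\theta]^\gamma$ is already fixed by the gauge action, so $E_1(a) = a$. By Proposition~\ref{prop:pos elts in Fp} we may write $a = \sum_{k=1}^m h_k s_p s_p^* \bar h_k$ for suitable $h_k \in C(X)$. Applying $E_2$, which sends $h_k s_p s_p^* \bar h_k \mapsto N_p^{-1} |h_k|^2$, gives
\[
E_2(a) = N_p^{-1} \sum_{k=1}^m |h_k|^2 \in C(X)_+.
\]
This vanishes if and only if every $h_k$ vanishes identically, which would force $a = 0$. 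Hence $E_2(a) \neq 0$, so $G(a) = E_2(a) \neq 0$, proving that $G|_{\CF_p}$ is faithful.

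The main (and only slightly delicate) point is justifying $E_1(a) = a$ for $a \in \CF_p$: one needs that $\CF_p \subset \CF = \CO[X,P,\theta]^\gamma$, which is exactly the content of Definition~\ref{def:core SIDoFT} together with Proposition~\ref{prop:F as ind limit}. Given that, everything reduces to the explicit formula for $E_2$ from Proposition~\ref{prop:cond exp from F to C(X)} applied to the normal form of positive elements in $\CF_p$ from Proposition~\ref{prop:pos elts in Fp}; both are already available, so no genuine obstacle remains. One should be mildly careful that Proposition~\ref{prop:pos elts in Fp} is stated for $(\CF_p)_+$ and that is precisely what we feed to $E_2$, so the argument is direct.
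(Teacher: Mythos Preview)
Your proof is correct and follows essentially the same approach as the paper: use Proposition~\ref{prop:pos elts in Fp} to write any positive $a \in \CF_p$ as $\sum_k h_k s_p s_p^* \bar h_k$, then observe that $G(a) = N_p^{-1}\sum_k |h_k|^2$ vanishes only if every $h_k$ does. Your additional remarks justifying that $G$ is a conditional expectation and that $E_1|_{\CF_p} = \id$ are more explicit than the paper's treatment but entirely in line with it.
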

\begin{proof}
By Proposition~\ref{prop:pos elts in Fp}, every element $a \in (\CF_p)_+$ is of the form $a = \sum_{i=1}^n f_is_ps_p^*\bar{f}_i$ for suitable $n \in \N$ and $f_i \in C(X)$. Then 
\[\begin{array}{c}  0 = G(a) = N_{p}^{-1} \sum\limits_{i=1}^n |f_i|^2 \end{array}\] 
implies $f_i = 0$ for all $i$, so $a = 0$. Thus $G$ is faithful on $\CF_p$. 
\end{proof}

\noindent Although the conditional expectation $G$ from Corollary~\ref{cor:cond exp to C(X)} may fail to be faithful, it satisfies the following weaker condition, which turns out to be useful in the proof of the main result Theorem~\ref{thm:top free char}.

\begin{lemma}\label{lem:weak faithfulness}
If $a \in \CO[X,P,\theta]_{+}$ satisfies $G(bab^{*}) = 0$ for all $b \in \CF$, then $a = 0$.
\end{lemma}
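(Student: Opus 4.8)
The goal is to upgrade the non-faithfulness of $G = E_2 \circ E_1$ to a genuine detection statement: if $a \geq 0$ and $G(bab^*) = 0$ for \emph{all} $b \in \CF$, then $a = 0$. The natural strategy is to first reduce to an element of $\CF$ using the faithful conditional expectation $E_1 : \CO[X,P,\theta] \to \CF$ from Lemma~\ref{lem:cond exp to F}, and then exhaust $\CF$ by the subalgebras $\CF_p$, on which $G$ \emph{is} faithful by Corollary~\ref{cor:cond exp to C(X)}. The point is that multiplying $a$ by suitable $b \in \CF$ lets one "project" the relevant positive mass into one of the $\CF_p$'s.

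\textbf{Step 1: reduce to $E_1(a)$.} First I would observe that since $\gamma_\ell(\CF) = \CF$ and $G = E_2 \circ E_1$ with $E_1(b) = \int_L \gamma_\ell(b)\,d\mu(\ell)$, the hypothesis should be transferable to $E_1(a)$. Concretely, for $b \in \CF$ one has $\gamma_\ell(b) \in \CF$ for all $\ell$, so $G(\gamma_\ell(b)\, a\, \gamma_\ell(b)^*) = 0$; applying $\gamma_{\ell^{-1}}$ (which fixes $C(X)$, hence commutes with $G$ in the appropriate sense) and integrating over $\ell$, one gets $G(b\, E_1(a)\, b^*) = 0$ for all $b \in \CF$, using that $E_1$ is a $\CF$-bimodule map and the invariance of Haar measure. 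Since $E_1$ is faithful and positive, it suffices to show $E_1(a) = 0$; so replacing $a$ by $E_1(a)$, we may assume $a \in \CF_+$.

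\textbf{Step 2: localize to some $\CF_p$.} Now $a \in \CF = \overline{\bigcup_{p} \CF_p}$ by Proposition~\ref{prop:F as ind limit}. If $a \neq 0$, pick $\varepsilon > 0$ with $\varepsilon < \|a\|$ and some $p \in P$ and $a_0 \in \CF_p$ with $\|a - a_0\| < \varepsilon$; one can arrange $a_0 \geq 0$ (e.g. replace $a_0$ by its positive part, or use that $a_0^* a_0 / \|a_0\|$ lies in $(\CF_p)_+$ and is close to $a$ after rescaling — the cleanest route is to approximate $a^{1/2}$ by some $b \in \CF_p$ and set $a_0 = b^* b \in (\CF_p)_+$, so that $\|a - a_0\|$ is small). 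Choose a partition of unity $(v_i)_{1 \leq i \leq n}$ subordinate to an open cover on which $\theta_p$ is injective, and set $\nu_i = (N_p v_i)^{1/2}$, so that $\sum_i \nu_i s_p s_p^* \nu_i = 1$ by Lemma~\ref{lem:CNP equiv}. Then for each $i$, the element $b_i := \nu_i s_p s_p^* \nu_i \in \CF_p \subset \CF$ is a valid test element, so $G(b_i a b_i) = 0$. Writing $a = a_0 + (a - a_0)$ and using that $\sum_i b_i a_0 b_i$ lives in $(\CF_p)_+$ while $G$ is faithful and contractive on $\CF_p$, one estimates: $\sum_i G(b_i a_0 b_i) = -\sum_i G(b_i (a - a_0) b_i)$, whose norm is at most $\sum_i \|b_i\|^2 \|a - a_0\| \cdot (\text{const})$. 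The main obstacle — and the step needing genuine care — is controlling $\sum_i \|b_i\|^2$: since $\sum_i b_i = 1$ with each $b_i \geq 0$, one has $\|b_i\| \leq 1$ but there are $n$ of them, so $\sum_i \|b_i\|^2 \leq n$, and $n$ depends on the cover. This is fine as long as we fix the cover \emph{first}; the delicate point is making the inequality $\sum_i b_i a_0 b_i = 0 \Rightarrow a_0$ small $\Rightarrow a$ small actually close up.

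\textbf{Step 3: close the argument.} I expect the cleanest formulation avoids the constant-juggling: since $G$ is faithful on $\CF_p$ and $\sum_i b_i a_0 b_i \in (\CF_p)_+$, the identity $G\bigl(\sum_i b_i a_0 b_i\bigr) = -G\bigl(\sum_i b_i(a-a_0)b_i\bigr)$ combined with faithfulness forces $\sum_i b_i a_0 b_i$ to have small norm. But $\sum_i b_i a_0 b_i$ is close (within $C \varepsilon$ for $C = \sum_i \|b_i\|^2 \leq n$) to $\sum_i b_i a b_i$... and here is the subtlety: $\sum_i b_i a b_i$ need not equal $a$, since the $b_i$ are not a resolution of the identity in a multiplicative sense. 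The correct move is instead: from $\sum_i b_i (s_p s_p^*) b_j$-type expansions and Proposition~\ref{prop:pos elts in Fp}, every positive element of $\CF_p$ has the form $\sum_k h_k s_p s_p^* \bar h_k$; so after Step 2's approximation write $a_0 = \sum_k h_k s_p s_p^* \bar h_k$ and test against $b = \sum_k \bar h_k$ -- no, better: test against each $\nu_i s_p$ individually. Since $G$ is faithful on $\CF_p$ and $G(b a b^*) = 0$ for every $b \in \CF_p$, in particular for $b = c^* \in \CF_p$ arbitrary we get $G(c^* a c) = 0$; with $a_0 \in \CF_p$, $\|a - a_0\| < \varepsilon$, choosing $c \in \CF_p$ with $\|c\| \leq 1$ and $\|c^* a_0 c\|$ close to $\|a_0\|$ (possible since $a_0 \geq 0$ in the C*-algebra $\CF_p$, take $c$ an approximate support projection), we get $\|a_0\| \lesssim \|c^* a_0 c\| = \|c^*(a_0 - a)c + c^* a c\| \leq \varepsilon + \|c^* a c\|$; and $G(c^* a c) = 0$ with $G$ faithful on $\CF_p$ but $c^* a c \notin \CF_p$ in general -- so one final reduction: $c^* a c$ is close to $c^* a_0 c \in (\CF_p)_+$ within $\varepsilon$, and $G(c^* a c) = 0$ gives $\|G(c^* a_0 c)\| \leq \varepsilon$, whence by faithfulness of $G$ on $\CF_p$ (which, being a positive faithful map on a C*-algebra, satisfies $\|x\| \leq M_p \|G(x^{1/2} \cdot x^{1/2})\|$-type bounds only after fixing $p$) one concludes $\|c^* a_0 c\|$ is small, so $\|a_0\| \lesssim \varepsilon$, so $\|a\| \lesssim \varepsilon$ — contradiction for $\varepsilon$ small. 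The genuine obstacle throughout is that $G$ is faithful on each $\CF_p$ but \emph{not uniformly}, so the quantitative bound relating $\|x\|$ to $\|G(x)\|$ may degrade with $p$; the resolution is that $p$ is chosen \emph{after} $\varepsilon$ and then \emph{held fixed}, and within the fixed $\CF_p$ the finite-dimensional-in-spirit faithfulness (Corollary~\ref{cor:cond exp to C(X)}) is enough to run a clean $\varepsilon/2$ argument.
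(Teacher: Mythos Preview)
Your Step~1 is essentially right (and in fact simpler than you make it: since $E_1$ is a conditional expectation onto $\CF$, it is an $\CF$-bimodule map, so $G(bab^*) = E_2(E_1(bab^*)) = E_2(bE_1(a)b^*) = G(bE_1(a)b^*)$ directly).

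The real problem is in Steps~2--3. Your approximation scheme needs a quantitative faithfulness bound of the form $\|x\| \leq M_p\,\|G(x)\|$ for $x \in (\CF_p)_+$. You acknowledge this and claim it is fine because ``$p$ is chosen after $\varepsilon$ and then held fixed.'' But that is exactly the circularity that breaks the argument: to approximate $a \in \CF_+$ within $\varepsilon$ by some $a_0 \in \CF_p$, the required $p$ depends on $\varepsilon$, and as $\varepsilon \to 0$ you may be forced to take $p$ larger and larger, so $M_p$ (which in this setting is essentially $N_p$) can blow up. Your final estimate is of the form $\|a\| \lesssim M_p \varepsilon$, which need not tend to zero. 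You never close this loop, and several of your own parenthetical remarks (``the genuine obstacle throughout\dots'') show you are aware the estimate does not close.

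The paper's proof sidesteps all quantitative estimates by a purely structural argument. From $G(bab^*)=0$ for all $b \in \CF$ one first deduces $G(bac)=0$ for all $b,c \in \CF$ via the Cauchy--Schwarz inequality $|G(bac)| \leq G(bacc^*ab^*)^{1/2} \leq \|a^{1/2}c\|\,G(bab^*)^{1/2}$. Hence $a$ lies in the closed two-sided ideal $I = \{d \in \CF : G(bdc)=0 \text{ for all } b,c \in \CF\}$. If $I \neq 0$, then since $\CF = \overline{\bigcup_p \CF_p}$ the ideal $I$ must meet some $\CF_p$ nontrivially, so there exists $0 \neq d \in (I \cap \CF_p)_+$ with $G(d)=0$, contradicting faithfulness of $G$ on $\CF_p$. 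The key idea you are missing is this polarization step turning the hypothesis into an \emph{ideal} condition, after which the inductive-limit structure of $\CF$ does the work with no $\varepsilon$'s at all.
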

\begin{proof}
Let us assume $a \in \CF$ at first and suppose $G(bab^{*}) = 0$ holds for all $b \in \CF$. This implies $G(bac) = 0$ for all $b,c \in \CF$ as 
\[|G(bac)| \leq G(bacc^{*}ab^{*})^{\frac{1}{2}} \leq \|a^{\frac{1}{2}}c\| G(bab^{*})^{\frac{1}{2}} = 0.\]
For $a \neq 0$, $I := \{d \in \CF \mid G(bdc) = 0 \text{ for all } b,c \in \CF\}$ is a non-trivial ideal in $\CF$. By $\CF = \overline{\bigcup_{p \in P}\CF_{p}}$, see Proposition~\ref{prop:F as ind limit}, it follows that $I \cap \CF_{p} \neq 0$ for some $p \in P$, so there is some $d \in (\CF_{p})_{+} \setminus \{0\}$ such that $G(d) = 0$.
But Proposition~\ref{prop:pos elts in Fp} shows that $d = \sum_{i = 1}^{n}{f_is_ps_p^*\bar{f}_i}$ for some $n \in \IN$ and suitable $f_{i} \in C(X)$, so $0 = G(b) = N_{p}^{-1}~\sum_{i = 1}^{n}{|f_{i}|^{2}} \neq 0$ yields a contradiction. Thus, we conclude that, for $a \in \CF_+$, $G(bab^*) = 0$ for all $b \in \CF$ implies $a = 0$. Now let $a \in \CO[X,P,\theta]_{+}$ be arbitrary. Then 
\[0 = G(bab^{*}) = G \circ E_{1}(bab^{*}) = G(bE_{1}(a)b^{*}) \text{ for all } b \in \CF,\]
so $E_{1}(a) = 0$ by what we have just shown. But this forces $a = 0$ since $E_{1}$ is faithful according to Lemma~\ref{lem:cond exp to F}.
\end{proof}

\section{An alternative approach via product systems of Hilbert bimodules}
\noindent
This section provides a different perspective on the C*-algebra $\CO[X,P,\theta]$ from Definition~\ref{def:O[X,P,theta]}: It can be thought of as the Cuntz-Nica-Pimsner algebra of a product system of Hilbert bimodules $\CX$ naturally associated to the irreversible $*$-commutative dynamical system $(X,P,\theta)$, see Proposition~\ref{prop:PS for an SIDoFT} and Theorem~\ref{thm:isom ad-hoc PS for SIDoFT}. This identification is not obvious as the latter C*-algebra is defined as a universal object for Cuntz-Nica-Pimsner covariant representations of the product system. We start with a brief introduction to discrete product systems of Hilbert bimodules, their representation theory and associated C*-algebras.\vspace*{3mm} 

\addsubsection{Background on product systems of Hilbert bimodules}~\\
Unless specified otherwise, let $A$ be a unital C*-algebra and $P$ a discrete, left cancellative, commutative monoid with unit $1_P$. There is a natural partial order on $P$ defined by $p \leq q$ if $q \in pP$ and we will assume $P$ to be lattice-ordered with respect to this partial order. That is to say, for $p,q \in P$ there exists a unique least common upper bound $p \vee q \in P$. Hence, there is also a unique greatest common lower bound $p \wedge q = (p \vee q)^{-1}pq$ for $p$ and $q$. In particular, this condition forces $P^* = \{1_P\}$. We point out that all these requirements are satisfied for countably generated, free abelian monoids.

\begin{definition}\label{def:Hilbert bimodule}
A $\C$-vector space $\CH$ equipped with a right $A$-module structure and a bilinear map $\langle \cdot,\cdot \rangle: \CH \times \CH \longrightarrow A$, which is linear in the second component, is called a \textit{right pre-Hilbert $A$-module}, if the following relations are satisfied for all $\xi,\theta \in \CH$ and $a \in A$:
\[\begin{array}{llclrlcl}
(1) & \left\langle \xi,\theta.a\right\rangle &=& \left\langle \xi,\theta \right\rangle a&\hspace*{12mm} (2) & \left\langle \xi,\theta\right\rangle^{\ast} &=& \left\langle \theta,\xi \right\rangle \vspace*{2mm}\\
(3) & \left\langle \xi,\xi\right\rangle &\geq& 0&\hspace*{12mm} (4) & \left\langle \xi,\xi\right\rangle &=& 0 \Longleftrightarrow \xi = 0 
\end{array}\] 
A right pre-Hilbert $A$-module $\CH$ is said to be a \textit{right Hilbert $A$-module} if it is complete with respect to the norm $\|\xi\| = \|\left\langle \xi,\xi\right\rangle\|_{A}^{\frac{1}{2}}$. $\CH$ is called a \textit{Hilbert bimodule} over $A$ if, in addition, there is a left action of $A$ given by a $\ast$-homomorphism $\phi_{\CH}:A \longrightarrow \CL(\CH)$, where $\CL(\CH)$ denotes the C*-algebra of all adjointable linear operators from $\CH$ to $\CH$.
\end{definition}

\begin{example}\label{ex:Hilbert bimodules}
Let $X$ be a compact Hausdorff space, $\theta:X \longrightarrow X$ a regular surjective local homeomorphism for which the induced injective $*$-endomorphism of $C(X)$ is denoted by $\alpha$. Then we can construct a Hilbert bimodule $\CH = \hspace*{1mm}_{\id}\hspace*{-0.5mm}C(X)_{\alpha}$ over $C(X)$ as follows: Starting with $C(X)$, we define an inner product $\langle f,g \rangle := L(\bar{f}g)$ for all $f,g \in C(X)$, where $L$ denotes the transfer operator for $\alpha$, see Example~\ref{ex:transfer operator for reg trafo}. It is clear that $f \mapsto \|L(|f|^2)\|^{\frac{1}{2}}$ is actually a norm on $C(X)$. Due to \cite{LR}*{Lemma 3.3}, this norm is equivalent to the standard norm $\|\cdot\|_\infty$. Hence, $C(X)$ is already complete with respect to this norm. The left action is given by multiplication whereas the right action is defined as $f.g = f\alpha(g)$ for $f,g \in C(X)$.
\end{example}

\begin{definition}\label{def:gen comp op}
Let $\CH$ be a right Hilbert module over $A$. For $\xi,\eta \in \CH$, $\Theta_{\xi,\eta} \in \CL(\CH)$, given by $\Theta_{\xi,\eta}(\zeta) = \xi.\left\langle \eta, \zeta \right\rangle$ for $\zeta \in \CH$, is said to be a \textit{generalized rank one operator}. The closed linear span of $\left(\Theta_{\xi,\eta}\right)_{\xi,\eta \in \CH}$ inside $\CL(\CH)$ is called the \textit{C*-algebra of generalized compact operators} $\CK(\CH)$.
\end{definition}

\noindent It is clear that $\CK(\CH)$ is an ideal in $\CL(\CH)$. Suppose $\CH_1$ and $\CH_2$ are Hilbert bimodules over $A$ whose left and right actions are denoted by $\phi_1,\phi_2$ and $\rho_1,\rho_2$, respectively. Then 
\[\langle [\xi_1 \otimes \xi_2],[\eta_1 \otimes \eta_2] \rangle_{\CH_1 \otimes_A \CH_2} = \langle \xi_2,\phi_2(\langle \xi_1,\eta_1 \rangle_1)\eta_2\rangle_2\]
defines an inner product on $(\CH_1 \odot \CH_2)/\sim$, where $\xi_1 \otimes \xi_2 \sim \eta_1 \otimes \eta_2$ if there exists $a \in A$ such that $\xi_2 = \phi_2(a)\eta_2 \text{ and } \eta_1 = \xi_1\rho_1(a)$. The completion of $(\CH_1 \odot \CH_2)/\sim$ with respect to the norm induced by this inner product can be equipped with left and right actions induced from $\phi_1$ and $\rho_2$, respectively, yielding a Hilbert bimodule $\CH_1 \otimes_A \CH_2$. This Hilbert bimodule is called the balanced tensor product of $\CH_1$ and $\CH_2$ over $A$, see for instance \cite{Lan}*{Proposition 4.5}.

\begin{definition}\label{def:prod system}
A \textit{product system of Hilbert bimodules} over $P$ with coefficients in the C*-algebra $A$ is a monoid $\CX$ together with a monoidal homomorphism $\rho:\CX \longrightarrow P$ such that:
\begin{enumerate}[(1)]
\item $\CX_{p} := \rho^{-1}(p)$ is a Hilbert bimodule over $A$ for each $p \in P$,
\item $\CX_{1_P} \cong \hspace*{1mm}_{\id}\hspace*{-0.5mm}A_{\id}$ as Hilbert bimodules and 
\item for all $p,q \in P$, we have $\CX_{p} \otimes_{A} \CX_{q} \cong \CX_{pq}$ if $p \neq 1_P$, and $\CX_{1_P} \otimes_{A} \CX_{q} \cong \overline{\phi_{q}(A)\CX_{q}}$.
\end{enumerate}   
\end{definition}

\begin{remark}\label{rem:prod system}
The multiplicative structure of $\CX$ yields $\ast$-homomorphisms 
\[\begin{array}{rcl}
\CL(\CX_{p}) &\stackrel{\iota_{p}^{pq}}{\longrightarrow}& \CL(\CX_{pq})\\
T &\mapsto& T \otimes id_{\CX_{q}}
\end{array}\] 
for all $p,q \in P$, where we have identified $\CX_{p} \otimes_{A} \CX_{q}$ with $\CX_{pq}$. It is clear that $\iota_{p}^{p}$ is an isomorphism whereas $\iota_{1_P}^{p}$ is an isomorphism if and only if $\CX_{p}$ is essential, that is, $\overline{\phi_{p}(A)\CX_{p}} = \CX_p$. If $A$ is unital, this is equivalent to $\phi_{p}(1_{A}) = 1_{\CL(\CX_p)}$.
\end{remark}


\begin{example}\label{ex:gen comp op}
The maps $\iota_{p}^{pq}$ introduced in \ref{rem:prod system}~c) need not map generalized compact operators to generalized compact operators. Consider for example the trivial case of $A = \C$ acting by multiplication on the fibers $\CX_{p} = H$, where $H$ is a separable, infinite-dimensional Hilbert space (equipped with a suitable product structure obtained from bijections $\N^{2} \longrightarrow \N$): $\iota_{1_P}^{p}$ is determined by the projection $\iota_{1_P}^{p}(1) = 1_{\CL(\CX_{p})}$ which is infinite and hence non-compact.      
\end{example}

\noindent There is a less restrictive requirement called compact alignment, which has been introduced for product systems over quasi-lattice ordered groups to avoid a certain pathology for the representation theory of product systems, see \cite{Fow1}*{Example 1.3}. 


\begin{definition}\label{def:cp aligned}
A product system of Hilbert bimodules $\CX$ over $P$ is called \textit{compactly aligned}, if 
\[\iota_{p}^{p \vee q}(k_{p})\iota_{q}^{p \vee q}(k_{q}) = (k_p{\otimes}1_{\CL(\CX_{(p \wedge q)^{-1}q})})(k_q{\otimes}1_{\CL(\CX_{(p \wedge q)^{-1}p})}) \in \CK(\CX_{p \vee q})\]
holds for all $p,q \in P$ and $k_{p} \in \CK(\CX_{p}), k_{q} \in \CK(\CX_{q})$.
\end{definition}

\noindent We will now proceed with stronger regularity properties, namely coherent systems of finite Parseval frames or orthonormal bases for product systems of Hilbert bimodules. This concept has been studied to some extent in \cite{HLS}.

\begin{definition}\label{def:ONB HB}
Let $\CH$ be a Hilbert bimodule over $A$ and $(\xi_{i})_{i \in I} \subset \CH$. Consider the following properties:
\begin{enumerate}[(1)]
\item $\left\langle \xi_{i},\xi_{j} \right\rangle = \delta_{ij}1_{A}$ for all $i,j \in I$.
\item $\eta = \sum\limits_{i \in I}{\xi_{i}\left\langle \xi_{i},\eta \right\rangle}$ for all $\eta \in \CH$.  
\end{enumerate}
If the family $(\xi_{i})_{i \in I}$ satisfies (2), it is called a \textit{Parseval frame} for $\CH$. A Parseval frame is said to be an \textit{orthonormal basis} for $\CH$, if it satisfies (1).
\end{definition}

\noindent A Parseval frame (orthonormal basis) is called finite if it consists of finitely many elements. In contrast to the case of orthonormal bases of a Hilbert space, the cardinality of an orthonormal basis of a Hilbert bimodule is not an invariant of the bimodule. For example, take $A = C\left([-2,-1] \cup [1,2]\right)$ and let $\CH = \hspace*{1mm}_{id}\hspace*{-0.5mm}A_{id}$ as in Example~\ref{ex:Hilbert bimodules}~(b). Then $\{1\}$ and $\{\chi_{[-2,-1]},\chi_{[1,2]}\}$ are both orthonormal bases for $\CH$.

\begin{lemma}\label{lem:ONB gives matrix units}
Let $\CH$ be a Hilbert bimodule and $(\xi_{i})_{i \in I} \subset \CH$. If equation~\ref{def:ONB HB}~(1) holds, then $\left(\Theta_{\xi_{i},\xi_{j}}\right)_{i,j \in I}$ is a system of matrix units. If $(\xi_{i})_{i \in I}$ is a finite Parseval frame, then $\sum_{i = 1}^{n}{\Theta_{\xi_{i},\xi_{i}}} = 1_{\CL(\CH)}$ and $\CK(\CH) = \CL(\CH)$.
\end{lemma}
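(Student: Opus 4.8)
The plan is to derive everything from the two elementary composition rules for the generalized rank one operators $\Theta_{\xi,\eta}$. First I would record that, for $\xi,\eta,\zeta,\omega\in\CH$, the module axioms of Definition~\ref{def:Hilbert bimodule} give $\Theta_{\xi,\eta}^{*}=\Theta_{\eta,\xi}$ and $\Theta_{\xi,\eta}\Theta_{\zeta,\omega}=\Theta_{\xi.\langle\eta,\zeta\rangle,\,\omega}$; the first follows from the identity $\langle\xi.a,\omega\rangle=a^{*}\langle\xi,\omega\rangle$ (itself a consequence of axioms (1) and (2)), and the second from right $A$-linearity of the inner product. Both are one-line checks.

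With these in hand, the first claim is immediate. If $\langle\xi_{i},\xi_{j}\rangle=\delta_{ij}1_{A}$, then $\Theta_{\xi_{i},\xi_{j}}^{*}=\Theta_{\xi_{j},\xi_{i}}$ and $\Theta_{\xi_{i},\xi_{j}}\Theta_{\xi_{k},\xi_{l}}=\Theta_{\xi_{i}.\langle\xi_{j},\xi_{k}\rangle,\,\xi_{l}}=\delta_{jk}\,\Theta_{\xi_{i},\xi_{l}}$, which are precisely the defining relations of a system of matrix units indexed by $I$. For the second claim I would simply rephrase the Parseval frame property~\ref{def:ONB HB}~(2) operator-theoretically: for every $\eta\in\CH$ one has $\eta=\sum_{i=1}^{n}\xi_{i}\langle\xi_{i},\eta\rangle=\bigl(\sum_{i=1}^{n}\Theta_{\xi_{i},\xi_{i}}\bigr)\eta$, whence $\sum_{i=1}^{n}\Theta_{\xi_{i},\xi_{i}}=1_{\CL(\CH)}$. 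Since $\CK(\CH)$ is an ideal of $\CL(\CH)$ containing the unit, it follows that $\CL(\CH)=1_{\CL(\CH)}\,\CL(\CH)\subseteq\CK(\CH)$, so $\CK(\CH)=\CL(\CH)$.

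There is no real obstacle here: the entire content is the bookkeeping identity $\Theta_{\xi,\eta}\Theta_{\zeta,\omega}=\Theta_{\xi.\langle\eta,\zeta\rangle,\,\omega}$ together with the observation that an ideal containing the unit is the whole algebra. The only point worth stating carefully is that $1_{\CL(\CH)}$ genuinely lies in $\CK(\CH)$ — which is where the \emph{finiteness} of the Parseval frame is used, since an infinite sum of generalized rank one operators need not converge in norm (compare Example~\ref{ex:gen comp op}).
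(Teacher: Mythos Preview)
Your proof is correct and follows essentially the same approach as the paper: verify the matrix unit relations from the orthonormality via the composition rule for rank-one operators, rewrite the Parseval identity as $\sum_{i}\Theta_{\xi_i,\xi_i}=1_{\CL(\CH)}$, and conclude $\CK(\CH)=\CL(\CH)$ from the fact that an ideal containing the unit is everything. The paper's version is terser but records precisely the same steps.
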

\begin{proof}
\ref{def:ONB HB}~(1) directly implies that $\left(\Theta_{\xi_i,\xi_j}\right)_{i,j \in I}$ is a system of matrix units. The reconstruction formula \ref{def:ONB HB}~(2) shows that $\left(\sum_{i \in F}\Theta_{\xi_i,\xi_i}\right)_{F \subset I \text{ finite}}$ converges strongly to $1_{\CL(\CH)}$. Thus, if $I$ is finite, we have $\sum_{i = 1}^n{\Theta_{\xi_i,\xi_i}} = 1_{\CL(\CH)}$ and the last claim follows since $\CK(\CH)$ is an ideal in $\CL(\CH)$.
\end{proof}

\begin{remark}\label{rem:ONBs tensored}
A useful aspect of Parseval frames of Hilbert bimodules is that they are well-behaved with respect to the balanced tensor product: If $\CH_1$ and $\CH_2$ are Hilbert bimodules over $A$ with Parseval frames $(\xi_i)_{i \in I}$ and $(\eta_j)_{j \in J}$, respectively, then $(\xi_i \otimes \eta_j)_{(i,j) \in I{\times}J}$ is a Parseval frame for $\CH_1 \otimes_A \CH_2$, see \cite{LR}*{Lemma 4.3} for a detailed proof. Therefore, a product system $\CX$ of Hilbert bimodules over $P$ is a product system with Parseval frames if and only if $\CX_p$ admits a Parseval frame for each irreducible $p \in P$. Here $p \in P$ is said to be irreducible if $p = qr$ for $q,r \in P$ implies $q = 1_P$ or $r = 1_P$. The same statements hold for orthonormal bases instead of Parseval frames.
\end{remark}


\begin{remark}\label{rem:Parseval fr from pou}
Suppose $X$ is a compact Hausdorff space and $\theta_1,\theta_2:X \longrightarrow X$ are commuting regular surjective local homeomorphisms with $|\theta_1^{-1}(x)| = N_1$ and $|\theta_2^{-1}(x)| = N_2$ (where $x \in X$ is arbitrary). For $i = 1,2$, denote by $\alpha_i$ the endomorphism of $C(X)$ given by $f \mapsto f \circ \theta_i$. As in Lemma~\ref{lem:loc homeo right reconstruction}, let us choose partitions of unity $(v_{1,i})_{i \in I_1}$ and $(v_{2,i})_{i \in I_2}$ subordinate to finite open covers $\CU_1 = (U_{1,i})_{i \in I_1}$ and $\CU_2 = (U_{2,i})_{i \in I_2}$ of $X$ for $\theta_1$ and $\theta_2$, respectively. By Lemma~\ref{lem:loc homeo right reconstruction}, each of these partitions of unity gives rise to a Parseval frame $(\nu_{j,i_j})_{i_j \in I_j}$ with $\nu_{j,i_j} := (N_{j}v_{j,i_j})^{\frac{1}{2}}$ of the Hilbert bimodule $C(X)_{\alpha_j}$, which is equipped with the inner product coming from the transfer operator $L_j$ as constructed in Example~\ref{ex:transfer operator for reg trafo}. Taking into account \cite{LR}*{Lemma 4.3}, it is no surprise that $(\nu_{1,i})_{i \in I_1}$ and $(\nu_{2,i})_{i \in I_2}$ yield a Parseval frame on the balanced tensor product of the two modules, i.e. on $C(X)_{\alpha_1\alpha_2}$. Interestingly, Lemma~\ref{lem:refine cover and pou} indicates that this Parseval frame is again of the same form: We can construct a partition of unity $(v_{1,i_1}\alpha_1(v_{2,i_2}))_{i_1 \in I_1,i_2 \in I_2}$ for $X$ from $(v_{1,i})_{i \in I_1}$ and $(v_{2,i})_{i \in I_2}$ which fits into the picture of Lemma~\ref{lem:loc homeo right reconstruction} for $\theta_1\theta_2$. 
\end{remark}

\begin{definition}\label{def:ONB sys for prod sys}\label{def:ONB-types}
A product system of Hilbert bimodules $\CX$ over $P$ with coefficients in a unital C*-algebra $A$ is called a \textit{product system of finite type} if there exists a finite Parseval frame for $\CX_p$ for each irreducible $p \in P$. 
\end{definition}

\begin{remark}\label{rem:fin type is cp aligned}
If $\CX$ is a product system of finite type, then each fiber $\CX_p$ has a finite Parseval frame by applying Remark~\ref{rem:ONBs tensored} to a decomposition of $p$ into irreducible elements (with multiplicities). So Lemma~\ref{lem:ONB gives matrix units} implies that $\CX$ is compactly aligned whenever it is of finite type.
\end{remark}

\addsubsection{Representation theory and C*-algebras for product systems}~\\
In this part, we recall some elementary facts about the representation theory for product systems of Hilbert bimodules in order to present the construction of the Cuntz-Nica-Pimsner algebra for compactly aligned product systems of Hilbert bimodules.  

\begin{definition}\label{def:rep of prod sys}
Let $\CX$ be a product system over $P$ and suppose $B$ is a C*-algebra. A map $\CX \stackrel{\varphi}{\longrightarrow} B$, whose fiber maps $\CX_{p} \longrightarrow B$ are denoted by $\varphi_{p}$, is called a \textit{Toeplitz representation} of $\CX$, if:
\begin{enumerate}[(1)]
\item $\varphi_{1_P}$ is a $\ast$-homomorphism.
\item $\varphi_{p}$ is linear for all $p \in P$.
\item $\varphi_{p}(\xi)^{*}\varphi_{p}(\eta) = \varphi_{1_P}\left(\left\langle \xi,\eta\right\rangle\right)$ for all $p \in P$ and $\xi,\eta \in \CX_{p}$.
\item $\varphi_{p}(\xi)\varphi_{q}(\eta) = \varphi_{pq}(\xi \eta)$ for all $p,q \in P$ and $\xi \in \CX_{p}, \eta \in \CX_{q}$.
\end{enumerate}   	
\end{definition}

\noindent A Toeplitz representation will be called a representation whenever there is no ambiguity. 

\begin{remark}\label{rem:rep gives hom for cp op}
Let $\varphi$ be a representation of $\CX$ in $B$. For each $p \in P$, $\varphi$ induces a $\ast$-homomorphism $\psi_{\varphi,p}:\CK(\CX_{p}) \longrightarrow B$ given by $\Theta_{\xi,\eta} \mapsto \varphi_p(\xi)\varphi_p(\eta)^*$.
\end{remark}

\begin{lemma}\label{lem:rep contractive}
A representation $\varphi$ of $\CX$ in $B$ is contractive. $\varphi$ is isometric if and only if $\varphi_{1_P}$ is injective.
\end{lemma}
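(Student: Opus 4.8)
The statement to prove is Lemma~\ref{lem:rep contractive}: a representation $\varphi$ of $\CX$ in $B$ is contractive, and it is isometric precisely when $\varphi_{1_P}$ is injective. The key tool is the identity in Definition~\ref{def:rep of prod sys}(3), namely $\varphi_p(\xi)^*\varphi_p(\eta) = \varphi_{1_P}(\langle\xi,\eta\rangle)$, which reduces everything about the $p$-fiber to the behaviour of the $\ast$-homomorphism $\varphi_{1_P}$ on the coefficient algebra $A$.

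\medskip

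\noindent For contractivity, I would fix $p \in P$ and $\xi \in \CX_p$ and compute $\|\varphi_p(\xi)\|^2 = \|\varphi_p(\xi)^*\varphi_p(\xi)\| = \|\varphi_{1_P}(\langle\xi,\xi\rangle)\|$ using (3). Since $\varphi_{1_P}$ is a $\ast$-homomorphism of C*-algebras, it is automatically contractive, so $\|\varphi_{1_P}(\langle\xi,\xi\rangle)\| \leq \|\langle\xi,\xi\rangle\|_A = \|\xi\|^2$, giving $\|\varphi_p(\xi)\| \leq \|\xi\|$. This handles all fibers at once; the $1_P$-fiber is included since $\varphi_{1_P}$ is a $\ast$-homomorphism and hence contractive directly.

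\medskip

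\noindent For the second assertion, one direction is immediate: if $\varphi$ is isometric, then in particular $\varphi_{1_P}$ is isometric on $\CX_{1_P} \cong {}_{\id}A_{\id}$, hence injective. Conversely, suppose $\varphi_{1_P}$ is injective. Then $\varphi_{1_P}$, being an injective $\ast$-homomorphism between C*-algebras, is isometric. Now for $p \in P$ and $\xi \in \CX_p$, the same computation as above gives $\|\varphi_p(\xi)\|^2 = \|\varphi_{1_P}(\langle\xi,\xi\rangle)\| = \|\langle\xi,\xi\rangle\|_A = \|\xi\|^2$, so each fiber map $\varphi_p$ is isometric, and therefore $\varphi$ is isometric.

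\medskip

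\noindent \textbf{Main obstacle.} Honestly, there is very little obstruction here — the proof is essentially the observation that $C^*$-algebra morphisms are contractive, and injective ones are isometric, combined with the inner-product compatibility (3). The only point requiring a moment's care is confirming that ``$\varphi$ is isometric'' is meant fibrewise (i.e.\ each $\varphi_p$ is an isometry as a map of normed spaces, the norm on $\CX_p$ being $\|\xi\| = \|\langle\xi,\xi\rangle\|_A^{1/2}$), rather than as a single map on a normed monoid; once that is pinned down, the argument above is complete.
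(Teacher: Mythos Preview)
Your proof is correct and follows essentially the same route as the paper: use condition~(3) to write $\|\varphi_p(\xi)\|^2 = \|\varphi_{1_P}(\langle\xi,\xi\rangle)\|$, then invoke that $\ast$-homomorphisms of C*-algebras are contractive and that injective ones are isometric. The paper's argument is slightly terser (it does not spell out the ``only if'' direction separately), but the content is identical.
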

\begin{proof}
Given $p \in P, \xi \in \CX_{p}$, we get
\[\|\varphi_{p}(\xi)\|_B^{2} = \|\varphi_{p}(\xi)^{*}\varphi(\xi)\|_B \stackrel{(3)}{=} \| \varphi_{1_P}(\left\langle \xi,\xi\right\rangle)\|_B \stackrel{(1)}{\leq} \|\left\langle \xi,\xi\right\rangle\|_{A} = \|\xi\|_{X_{p}}^{2}.\]
Since $\varphi_{1_P}$ is a $\ast$-homomorphism, it is injective if and only if it is isometric. In this case the computation from above gives $\|\varphi_{p}(\xi)\|_B = \|\xi\|_{X_{p}}$.  
\end{proof}

\begin{definition}\label{def:Nica cov}
A representation $\varphi$ of a compactly aligned product system $\CX$ in $B$ is \textit{Nica covariant}, if 
\[\psi_{\varphi,p}(k_{p})\psi_{\varphi,q}(k_{q}) = \psi_{\varphi,p \vee q}\left(\iota_{p}^{p \vee q}(k_{p})\iota_{q}^{p \vee q}(k_{q})\right)\]
holds for all $p,q \in P$ and $k_{p} \in \CK(\CX_{p}),k_{q} \in \CK(\CX_{q})$.
\end{definition}

\noindent Note that compact alignment is needed to ensure that $\iota_{p}^{p \vee q}(k_{p})\iota_{q}^{p \vee q}(k_{q})$ is contained in the domain of $\psi_{\varphi,p \vee q}$. While Nica covariance is an outcome of having a product system instead of a single Hilbert bimodule and its form is rather straightforward, there have been different attempts to generalize the notion of Cuntz-Pimsner covariance from the case of a single Hilbert bimodule to general product systems. Let us recall the covariance condition introduced in \cite{Pim} for the corresponding product system over $\N$: Suppose $\CH$ is a Hilbert bimodule over a C*-algebra $A$ and $(\varphi_0,\varphi_1)$ is a representation of $\CH$. Then we can equally well study the induced representation $\varphi$ of the product system $\CX$ over $\N$ with fibers $\CX_n = \CH^{{\otimes}n}$, where $\CH^{{\otimes}0} = A$. $(\varphi_0,\varphi_1)$ is said to be Cuntz-Pimsner covariant, if $\varphi_0(a) = \psi_{\varphi,n}(\phi_n(a)) \text{ holds for all } a \in \phi_n^{-1}(\CK(\CX_n))$.

The intuitive approach to define a notion of Cuntz-Pimsner covariance for product systems by requiring Cuntz-Pimsner covariance on each fiber has been set up in \cite{Fow2}. In \cite{Kat}*{Definition 3.4}, Takeshi Katsura introduced a weaker version: Instead of $\phi_{p}^{-1}(\CK(\CX_{p}))$, only $\phi_{p}^{-1}(\CK(\CX_{p})) \cap (\ker\phi_p)^\perp$ is taken into account. Since the left actions in our examples will always be injective, we will not discuss this aspect any further. 

Several years later, a more involved approach of Aidan Sims and Trent Yeend led to a potentially different notion of Cuntz-Nica-Pimsner covariance, see \cite{SY}*{Section 3}. According to \cite{SY}, their definition is motivated by the study of graph C*-algebras and was expected to be more suitable in the case of product systems where the left action $\phi$ need not be injective.

We will now present both covariance conditions for product systems and indicate what is currently known about their connections as well as their relation to Nica covariance. In order to avoid technicalities, we restrict ourselves to the case where the left action $\phi_p$ on each fiber $\CX_p$ is injective. Therefore, we can neglect the inflation process from $\CX$ to $\tilde{\CX}$ taking place in \cite{SY}*{Section 3}. At this point, one may expect that the two notions ought to coincide. This is true at least to some extent, but non-trivial, see \cite{SY}*{Proposition 5.1 and Corollary 5.2}.   

\begin{definition}\label{def:CNP cov}
Let $B$ be a C*-algebra and suppose $\CX$ is a compactly aligned product system of Hilbert bimodules over $P$ with coefficients in $A$.
\[\begin{array}{cl}
(\text{CP}_{F}) &\hspace*{-2mm} \text{A representation $\CX \stackrel{\varphi}{\longrightarrow} B$ is called \textit{Cuntz-Pimsner covariant}}\\ 
&\hspace*{-2mm}\text{in the sense of \cite{Fow2}*{Section 1}, if it satisfies}\vspace*{2mm}\\ 
&\hspace*{-2mm}\psi_{\varphi,p}(\phi_{p}(a)) = \varphi_{1_P}(a) \text{ for all } p \in P \text{ and } a \in \phi_{p}^{-1}(\CK(\CX_{p})) \subset A.\vspace*{2mm}\\
(\text{CP}) &\hspace*{-2mm} \text{A representation $\CX \stackrel{\varphi}{\longrightarrow} B$ is called \textit{Cuntz-Pimsner covariant}}\\
&\hspace*{-2mm}\text{in the sense of \cite{SY}*{Definition 3.9}, if the following holds:}\\
&\hspace*{-2mm}\text{Suppose $F \subset P$ is finite and we fix $k_{p} \in \CK(\CX_{p})$ for each $p \in F$.}\\ 
&\hspace*{-2mm}\text{If, for every $r \in P$, there is $s \geq r$ such that}\vspace*{2mm}\\ 
&\hspace*{-2mm}\begin{array}{llcll}
&\sum\limits_{p \in F}{\iota_{p}^{t}(k_{p})} &=& 0 & \text{ holds for all $t \geq s$,}\vspace*{2mm}\\
\text{then }&\sum\limits_{p \in F}{\psi_{\varphi,p}(k_{p})} &=& 0 \text{ holds true.} 
\end{array}\\
(\text{CNP}) &\hspace*{-2mm} \text{A representation $\CX \stackrel{\varphi}{\longrightarrow} B$ is said to be \textit{Cuntz-Nica-Pimsner}}\\
&\hspace*{-2mm}\text{\textit{covariant}, if it is Nica covariant and $(\text{CP})$-covariant.}
\end{array}\]
\end{definition}

\noindent When Aidan Sims and Trent Yeend introduced their alternative notion of Cuntz-Pimsner covariance, they observed that it agrees with the one proposed by Neal J. Fowler in special cases, compare \cite{SY}*{Proposition 5.1}:

\begin{proposition}\label{prop:equiv of cov cond}
Suppose $\CX$ is a compactly aligned product system over $P$ with coefficients in a unital C*-algebra $A$ such that the left action $\phi_p$ on $\CX_p$ is injective for all $p \in P$. If a representation $\varphi$ of $\CX$ is $(CP_F)$-covariant, then it is $(CP)$-covariant. If the left action $\phi_p(A)$ is by compacts for all $p \in P$, then the converse holds as well.
\end{proposition}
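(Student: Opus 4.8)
The statement is a special case of \cite{SY}*{Proposition~5.1}, and the plan is to recover it from the material above, treating the two implications separately. For the second implication assume $\varphi$ is $(CP)$-covariant and $\phi_p(A)\subseteq\CK(\CX_p)$ for all $p$, so that $\phi_p^{-1}(\CK(\CX_p))=A$ and the claim reads $\psi_{\varphi,p}(\phi_p(a))=\varphi_{1_P}(a)$ for all $a\in A$, $p\in P$. For $p=1_P$ this holds since $\psi_{\varphi,1_P}=\varphi_{1_P}$ on $\CK(\CX_{1_P})=A$, so fix $p\neq 1_P$ and $a\in A$ and apply $(CP)$ to $F=\{1_P,p\}$ with $k_{1_P}:=-a\in\CK(\CX_{1_P})$ and $k_p:=\phi_p(a)$, which lies in $\CK(\CX_p)$ exactly because of the standing hypothesis. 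As the left action respects the tensor factorisation of the fibres, $\iota_p^t(\phi_p(a))=\phi_t(a)$ and $\iota_{1_P}^t(-a)=-\phi_t(a)$ for every $t\geq p$, whence $\iota_{1_P}^t(k_{1_P})+\iota_p^t(k_p)=0$ for all $t\geq p$; the hypothesis of $(CP)$ is thus satisfied (take $s=r\vee p$ for a given $r$), and $(CP)$ forces $\psi_{\varphi,1_P}(-a)+\psi_{\varphi,p}(\phi_p(a))=0$, which is the assertion. (If the $\phi_p$ are not unital the two summands must be compared on the inflated fibre $\tilde{\CX}_t$ of \cite{SY}*{Section~3}; this is the only point that bookkeeping enters, and it is vacuous for the product systems occurring in this paper, whose left actions are unital.)

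For the first implication assume $\varphi$ is $(CP_F)$-covariant, fix a finite $F\subseteq P$ and $k_p\in\CK(\CX_p)$ satisfying the hypothesis of $(CP)$, put $r_0:=\bigvee_{p\in F}p$ and $T:=\sum_{p\in F}\psi_{\varphi,p}(k_p)$; we must show $T=0$. First I would record that, because every left action is injective, each connecting map $\iota_q^s\colon\CL(\CX_q)\to\CL(\CX_s)$ with $q\leq s$ is injective: $\iota_q^s(S)=0$ forces $\phi_{q^{-1}s}(\langle S\xi,S\xi\rangle)=0$, hence $S\xi=0$, for every $\xi$. Combined with $\iota_p^t=\iota_{r_0}^t\circ\iota_p^{r_0}$ for $p\in F$ and $t\geq r_0$, this reduces the hypothesis of $(CP)$ to the single equation $\sum_{p\in F}\iota_p^{r_0}(k_p)=0$ in $\CL(\CX_{r_0})$. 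Next, from the representation relations~(3) and~(4) one verifies, first on $k=\Theta_{a,b}$ and elementary tensors $\xi=\zeta\otimes\eta\in\CX_p\otimes_A\CX_{p^{-1}s}$ and then by linearity and continuity, the relation
\[\psi_{\varphi,p}(k)\,\varphi_s(\xi)=\varphi_s\big(\iota_p^s(k)\,\xi\big)\qquad(p\leq s,\ k\in\CK(\CX_p),\ \xi\in\CX_s).\]
With $s=r_0$ this gives $T\varphi_{r_0}(\xi)=\varphi_{r_0}\big(\big(\sum_{p\in F}\iota_p^{r_0}(k_p)\big)\xi\big)=0$ for all $\xi\in\CX_{r_0}$, and the same computation for the family $(k_p^{*})_{p\in F}$ (which satisfies the same hypothesis, $\iota$ and $\psi$ being $\ast$-homomorphisms) yields $\varphi_{r_0}(\xi)^{*}T=0$. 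Consequently $T$ annihilates on both sides the C*-algebra $\psi_{\varphi,r_0}(\CK(\CX_{r_0}))$, which is the closed linear span of the operators $\varphi_{r_0}(\xi)\varphi_{r_0}(\eta)^{*}$, while $T$ itself lies in $\sum_{r\leq r_0}\psi_{\varphi,r}(\CK(\CX_r))$.

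The remaining step, concluding $T=0$, is the one I expect to be the real obstacle. Set $\CI:=\psi_{\varphi,r_0}(\CK(\CX_{r_0}))$ and let $\CB$ be the C*-subalgebra of $C^*(\varphi)$ generated by $\bigcup_{r\leq r_0}\psi_{\varphi,r}(\CK(\CX_r))$; Nica covariance shows that $\CI$ is an ideal of $\CB$, and we have just seen that $T\in\CB$ with $T\CI=\CI T=0$, so it would suffice to know that $\CI$ is an essential ideal of $\CB$. This is precisely where $(CP_F)$ is used a second time: the identity $\psi_{\varphi,r_0}(\phi_{r_0}(a))=\varphi_{1_P}(a)$ on $\phi_{r_0}^{-1}(\CK(\CX_{r_0}))$, together with the injectivity of the connecting maps recorded above, squeezes the lower-level corners $\psi_{\varphi,r}(\CK(\CX_r))$ with $r<r_0$ sufficiently deep into $\CI$ to force essentiality. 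Establishing this rests on the structural analysis of the core carried out in \cite{SY}*{Section~3}, and at this point I would simply invoke the argument of \cite{SY}*{Proposition~5.1}, which goes through verbatim under the present hypotheses.
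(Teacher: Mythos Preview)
The paper does not prove this proposition; it is stated with the attribution ``compare \cite{SY}*{Proposition 5.1}'' and no argument is given. So there is no ``paper's own proof'' to compare against --- you are supplying one where the paper simply cites the literature.

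Your treatment of the second implication ($(CP)\Rightarrow(CP_F)$ when the left actions are by compacts) is correct and self-contained: applying $(CP)$ to $F=\{1_P,p\}$ with $k_{1_P}=-a$, $k_p=\phi_p(a)$ is exactly the right move, and the identifications $\iota_{1_P}^t(-a)=-\phi_t(a)$ and $\iota_p^t(\phi_p(a))=\phi_t(a)$ are valid.

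For the first implication your skeleton is sound. The reduction of the $(CP)$ hypothesis to the single equation $\sum_{p\in F}\iota_p^{r_0}(k_p)=0$ via injectivity of the connecting maps is correct, as is the computation showing that $T$ annihilates $\psi_{\varphi,r_0}(\CK(\CX_{r_0}))$ on both sides. But the step you yourself flag as ``the real obstacle'' --- passing from $T\CI=\CI T=0$ to $T=0$ --- is genuinely the entire content of this direction, and you end up deferring it to \cite{SY}. (Incidentally, you write that $(CP_F)$ is used ``a second time'' at that point, but it is not used anywhere earlier in your argument for this implication; everything up to the annihilation of $\CI$ uses only the Toeplitz relations, which is precisely why the remaining step cannot be soft.) So for this direction your proposal amounts to the same citation the paper makes, prefaced by a useful reduction. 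That is fine as an exposition, but you should be aware that the essentiality claim is where all the work lies, and that $(CP_F)$ alone --- without the compactness hypothesis --- may give you very little to work with, since $\phi_{r_0}^{-1}(\CK(\CX_{r_0}))$ can be small.
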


\noindent In some instances, $(CP_F)$-covariance is known to imply Nica covariance. The result we are going to use is due to Fowler and we refer to \cite{Fow2}*{Proposition 5.4} for a proof.

\begin{proposition}\label{prop:CP-F cov gives Nica cov}
If $\CX$ is a compactly aligned product system over $P$ with coefficients in a unital C*-algebra $A$ such that $\phi_p(1_{A}) = 1_{\CL(\CX_p)} \in \CK(\CX_p)$ for all $p \in P$, then every ($CP_{F}$)-covariant representation is also Nica covariant.
\end{proposition}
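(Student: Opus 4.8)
The plan is to isolate the single consequence of the hypothesis that really matters --- that $\psi_{\varphi,p}(1_{\CL(\CX_p)})$ is one and the same projection for every $p$ --- and then to prove that, for $p \leq r$, the maps $\psi_{\varphi,r} \circ \iota_p^r$ and $\psi_{\varphi,p}$ agree on $\CK(\CX_p)$. Nica covariance will then follow immediately from multiplicativity of $\psi_{\varphi,p \vee q}$.

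First I would record the formal consequences of the standing assumption. Since $1_{\CL(\CX_p)} = \phi_p(1_A)$ lies in the ideal $\CK(\CX_p)$ of $\CL(\CX_p)$, we get $\CK(\CX_p) = \CL(\CX_p)$ for every $p \in P$; hence $\psi_{\varphi,p}$ is defined on all of $\CL(\CX_p)$, each $\iota_p^{pq}$ maps $\CK(\CX_p)$ into $\CK(\CX_{pq})$, and $\phi_p^{-1}(\CK(\CX_p)) = A$. Feeding $a = 1_A$ into the $(CP_F)$ relation gives $\psi_{\varphi,p}(1_{\CL(\CX_p)}) = \varphi_{1_P}(1_A) =: e$ for all $p \in P$, a fixed projection in $B$. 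Using $\phi_p(1_A) = 1_{\CL(\CX_p)}$ together with property~(4) in Definition~\ref{def:rep of prod sys}, one reads off $e\varphi_p(\xi) = \varphi_p(\xi) = \varphi_p(\xi)e$ for all $\xi \in \CX_p$, and therefore $e\psi_{\varphi,p}(k) = \psi_{\varphi,p}(k) = \psi_{\varphi,p}(k)e$ for all $k \in \CK(\CX_p)$.

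The main step is the claim that, for $p,r \in P$ with $r = pt$, one has $\psi_{\varphi,r}(\iota_p^r(k)) = \psi_{\varphi,p}(k)$ for all $k \in \CK(\CX_p)$. By linearity and continuity it suffices to treat $k = \Theta_{\xi,\eta}$ with $\xi,\eta \in \CX_p$; the degenerate cases $t = 1_P$ (where $\iota_p^r$ is the identity) and $p = 1_P$ (which is just the $(CP_F)$ relation) are immediate, so assume $p \neq 1_P \neq t$. Because $1_{\CL(\CX_t)} \in \CK(\CX_t)$, a routine invertibility argument --- approximate $1_{\CL(\CX_t)}$ by a finite sum of generalized rank one operators and then multiply through by the inverse --- produces a genuine finite expansion $1_{\CL(\CX_t)} = \sum_{i=1}^m \Theta_{g_i,f_i}$ with $g_i,f_i \in \CX_t$. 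Identifying $\CX_r$ with $\CX_p \otimes_A \CX_t$ and unwinding the definition $\iota_p^r(T) = T \otimes \id_{\CX_t}$ against the balancing relation of the tensor product, one gets
\[ \iota_p^r(\Theta_{\xi,\eta}) = \sum_{i=1}^m \Theta_{\xi \otimes g_i,\, \eta \otimes f_i}. \]
Applying the $\ast$-homomorphism $\psi_{\varphi,r}$, using $\psi_{\varphi,r}(\Theta_{\zeta,\chi}) = \varphi_r(\zeta)\varphi_r(\chi)^*$ from Remark~\ref{rem:rep gives hom for cp op} and the multiplicativity $\varphi_r(\xi \otimes g_i) = \varphi_p(\xi)\varphi_t(g_i)$, we obtain
\begin{align*}
\psi_{\varphi,r}\bigl(\iota_p^r(\Theta_{\xi,\eta})\bigr) &= \sum_{i=1}^m \varphi_p(\xi)\,\varphi_t(g_i)\,\varphi_t(f_i)^*\,\varphi_p(\eta)^* = \varphi_p(\xi)\,\psi_{\varphi,t}\bigl(1_{\CL(\CX_t)}\bigr)\,\varphi_p(\eta)^* \\
&= \varphi_p(\xi)\,e\,\varphi_p(\eta)^* = \varphi_p(\xi)\varphi_p(\eta)^* = \psi_{\varphi,p}(\Theta_{\xi,\eta}),
\end{align*}
which proves the claim.

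Nica covariance now follows with no further work. Given $p,q \in P$, $k_p \in \CK(\CX_p)$ and $k_q \in \CK(\CX_q)$, set $s := p \vee q$; since $p \leq s$ and $q \leq s$, the claim together with the fact that $\psi_{\varphi,s}\colon \CK(\CX_s) \to B$ is a $\ast$-homomorphism gives
\[ \psi_{\varphi,p}(k_p)\,\psi_{\varphi,q}(k_q) = \psi_{\varphi,s}\bigl(\iota_p^s(k_p)\bigr)\,\psi_{\varphi,s}\bigl(\iota_q^s(k_q)\bigr) = \psi_{\varphi,s}\bigl(\iota_p^s(k_p)\,\iota_q^s(k_q)\bigr), \]
which is exactly the Nica covariance identity of Definition~\ref{def:Nica cov}. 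I expect the only genuinely delicate point to be the bookkeeping in the displayed formula for $\iota_p^r(\Theta_{\xi,\eta})$ --- keeping the identification $\CX_r \cong \CX_p \otimes_A \CX_t$ and the action of $\iota_p^r$ on generalized rank one operators straight, and making sure the finite expansion of $1_{\CL(\CX_t)}$ is an exact identity rather than a norm limit. Everything else is formal manipulation with the representation axioms and with the fact that each $\phi_p$ is unital.
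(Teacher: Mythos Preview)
Your argument is correct. The paper does not give its own proof of this proposition; it simply records the statement and refers the reader to \cite{Fow2}*{Proposition~5.4}. Your approach --- extracting from $(CP_F)$ that $\psi_{\varphi,t}(1_{\CL(\CX_t)}) = \varphi_{1_P}(1_A)$, using a finite rank-one expansion of $1_{\CL(\CX_t)}$ to show $\psi_{\varphi,r}\circ\iota_p^r = \psi_{\varphi,p}$ for $p \leq r$, and then reading off Nica covariance from multiplicativity of $\psi_{\varphi,p\vee q}$ --- is the standard one and is essentially Fowler's argument; the invertibility trick for upgrading a norm approximation of $1_{\CL(\CX_t)}$ to an exact finite sum of rank-one operators (via $T\Theta_{\xi,\eta} = \Theta_{T\xi,\eta}$) is the right way to handle the one place where care is needed.
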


\begin{corollary}\label{cor:equiv of cov cond - fin type}
If $\CX$ is a product system of finite type, then a representation $\varphi$ of $\CX$ is $(\text{CNP})$-covariant if and only if it is $(CP_F)$-covariant.
\end{corollary}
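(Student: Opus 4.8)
The plan is to deduce the corollary by combining the three propositions that precede it, once one observes that the finite-type hypothesis supplies exactly the extra regularity those propositions require. First I would recall from Remark~\ref{rem:fin type is cp aligned} that a product system of finite type is compactly aligned and that each fiber $\CX_p$ admits a finite Parseval frame. For such a fiber, Lemma~\ref{lem:ONB gives matrix units} gives $\CK(\CX_p) = \CL(\CX_p)$; in particular $\phi_p(1_A) = 1_{\CL(\CX_p)} \in \CK(\CX_p)$, and the entire left action $\phi_p(A)$ consists of generalized compact operators. These are exactly the hypotheses appearing in Proposition~\ref{prop:CP-F cov gives Nica cov} and in the converse half of Proposition~\ref{prop:equiv of cov cond}.

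For the implication $(CP_F) \Rightarrow (CNP)$: if $\varphi$ is $(CP_F)$-covariant, then Proposition~\ref{prop:CP-F cov gives Nica cov} shows that $\varphi$ is Nica covariant, while Proposition~\ref{prop:equiv of cov cond} shows that $\varphi$ is $(CP)$-covariant; together these are, by definition, $(CNP)$-covariance. For the reverse implication, a $(CNP)$-covariant representation is in particular Nica covariant and $(CP)$-covariant, and since $\phi_p(A) \subset \CK(\CX_p)$ for every $p$, the second statement of Proposition~\ref{prop:equiv of cov cond} promotes $(CP)$-covariance to $(CP_F)$-covariance. Combining the two implications yields the claimed equivalence.

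I do not anticipate a computational obstacle here; the real work is lining up the hypotheses, and the single point to be careful about is that Proposition~\ref{prop:equiv of cov cond} is stated under the blanket assumption that each left action $\phi_p$ is injective. This assumption is in force throughout the present section (the left actions in all examples of interest are injective), so I would simply invoke it. The conceptual punchline is that the finite-type condition is precisely strong enough to make all three preceding propositions applicable simultaneously, which is exactly why the two a priori different covariance conditions coincide.
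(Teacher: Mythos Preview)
Your proposal is correct and follows exactly the approach of the paper's proof, which simply cites Lemma~\ref{lem:ONB gives matrix units} together with Proposition~\ref{prop:equiv of cov cond} and Proposition~\ref{prop:CP-F cov gives Nica cov}. You have merely spelled out in detail how these three ingredients combine, including the observation that the finite-type hypothesis forces $\CK(\CX_p) = \CL(\CX_p)$ so that the left action is by compacts and the unit lands in the compacts.
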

\begin{proof}
The result follows from Lemma~\ref{lem:ONB gives matrix units} together with Proposition~\ref{prop:equiv of cov cond} and Proposition~\ref{prop:CP-F cov gives Nica cov}.
\end{proof}


\begin{definition}\label{def:alg for prod sys}
For a compactly aligned product system $\CX$ over $P$ define $\CT_{\CX}$ to be the C*-algebra given by a Toeplitz representation $\iota_{\CT_{\CX}}$ of $\CX$ that is universal for Toeplitz representations. 
Similarly, $\mathcal{NT}_{\CX}$ and $\CO_{\CX}$ are the C*-algebras given by a universal Nica-covariant representation $\iota_{\mathcal{NT}_{\CX}}$ and a universal Cuntz-Nica-Pimsner covariant representation $\iota_{\CO_{\CX}}$, respectively. $\CT_{\CX},~ \mathcal{NT}_{\CX}$ and $\CO_{\CX}$ are called the \textit{Toeplitz algebra}, the \textit{Nica-Toeplitz algebra}, and the \textit{Cuntz-Nica-Pimsner algebra} associated to $\CX$.
\end{definition}

\addsubsection{\texorpdfstring{The case of irreversible $*$-commutative dynamical systems}{The case of irreversible *-commutative dynamical systems}}~\\
We will now show how to treat irreversible $*$-commutative dynamical systems in the framework of product systems of Hilbert bimodules.

\begin{proposition}\label{prop:PS for an SIDoFT}
Suppose $(X,P,\theta)$ is an irreversible $*$-commutative dynamical system of finite type and $P \stackrel{\alpha}{\curvearrowright} C(X)$ is the action induced by $\theta$, i.e. $\alpha_p(f) = f \circ \theta_p$ for $p \in P$ and $f \in C(X)$. Then $\CX_p := C(X)_{\alpha_p}$, with left action $\phi_p$ given by multiplication in $C(X)$ and inner product $\langle f,g \rangle_p = L_p(\overline{f}g)$ is an essential Hilbert bimodule, where $L_p$ is the natural transfer operator associated to $\alpha_p$, see Example~\ref{ex:transfer operator for reg trafo}. The disjoint union of all $\CX_p, p \in P$ forms a product system $\CX$ of finite type over $P$ with coefficients in $C(X)$.
\end{proposition}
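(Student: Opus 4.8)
The plan is to reduce the whole statement to facts already available for a single regular transformation, and otherwise to the monoidal bookkeeping of Definition~\ref{def:prod system}. First I would settle the fibrewise claim: for each $p\in P$ the map $\theta_p$ is a regular surjective local homeomorphism by Definition~\ref{def:SID}~(C), so Example~\ref{ex:Hilbert bimodules} applies verbatim and shows that $\CX_p=C(X)_{\alpha_p}$, with left action $\phi_p$ by pointwise multiplication, right action $f.a=f\alpha_p(a)$, and inner product $\langle f,g\rangle_p=L_p(\bar f g)$, is a Hilbert bimodule over $C(X)$ (completeness being supplied by the norm-equivalence \cite{LR}*{Lemma 3.3}). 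Essentiality is then immediate: $C(X)$ is unital and $\phi_p$ is multiplication, so $\phi_p(1_{C(X)})=1_{\CL(\CX_p)}$ and hence $\overline{\phi_p(C(X))\CX_p}=\CX_p$; cf.\ Remark~\ref{rem:prod system}.

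Next I would assemble the fibres. Set $\CX:=\bigsqcup_{p\in P}\CX_p$, let $\rho$ send $\xi\in\CX_p$ to $p$, and for $\xi\in\CX_p$, $\eta\in\CX_q$, both regarded as elements of $C(X)$, define $\xi\cdot\eta:=\xi\,\alpha_p(\eta)\in C(X)=\CX_{pq}$. Since $\theta$ is an action of $P$ we have $\theta_{pq}=\theta_p\theta_q$, hence $\alpha_{pq}=\alpha_p\circ\alpha_q=\alpha_q\circ\alpha_p$; a one-line computation then gives $(\xi\cdot\eta)\cdot\zeta=\xi\,\alpha_p(\eta)\,\alpha_{pq}(\zeta)=\xi\cdot(\eta\cdot\zeta)$, and $1\in\CX_{1_P}$ is a two-sided unit because $\alpha_{1_P}=\id$. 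Thus $\CX$ is a monoid and $\rho$ a monoidal homomorphism, and axiom~(2) of Definition~\ref{def:prod system} holds on the nose since $\CX_{1_P}={}_{\id}C(X)_{\id}$.

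The substantive point is axiom~(3). For $p\neq 1_P$ I would exhibit the canonical map
\[ U_{p,q}\colon \CX_p\otimes_{C(X)}\CX_q\longrightarrow\CX_{pq},\qquad f\otimes g\longmapsto f\,\alpha_p(g), \]
and check that it is an isomorphism of Hilbert $C(X)$-bimodules. Well-definedness on the balanced tensor product (it sends $f\alpha_p(a)\otimes g$ and $f\otimes ag$ to the common value $f\alpha_p(ag)$), the bimodule property, and surjectivity (already $f\otimes 1\mapsto f$) are routine. The only genuine computation is that $U_{p,q}$ preserves inner products: starting from $\langle f_1\alpha_p(g_1),f_2\alpha_p(g_2)\rangle_{pq}=L_{pq}\big(\overline{f_1}f_2\,\alpha_p(\overline{g_1}g_2)\big)$, one uses the factorisation $L_{pq}=L_q\circ L_p$ (obtained by iterating the defining preimage-averages, using $\theta_{pq}=\theta_p\theta_q$ and $N_{pq}=N_pN_q$), then the transfer-operator identity $L_p(\alpha_p(a)b)=aL_p(b)$ from Definition~\ref{def:transfer operator} together with commutativity of $C(X)$, to arrive at $L_q\big(\overline{g_1}\,L_p(\overline{f_1}f_2)\,g_2\big)=\langle f_1\otimes g_1,f_2\otimes g_2\rangle_{\CX_p\otimes_{C(X)}\CX_q}$. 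For $p=1_P$ one instead invokes the standard identification $\CX_{1_P}\otimes_{C(X)}\CX_q\cong\overline{\phi_q(C(X))\CX_q}$, which equals $\CX_q=\CX_{1_P q}$ by the essentiality established above. This inner-product identity is the place where I would spend the most care, but it is a short calculation rather than a real obstacle; the whole proposition is essentially bookkeeping once Example~\ref{ex:Hilbert bimodules} and Lemma~\ref{lem:loc homeo right reconstruction} are in hand.

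Finally, to see that $\CX$ is of finite type it suffices, by Remark~\ref{rem:ONBs tensored}, to produce a finite Parseval frame for $\CX_p$ whenever $p$ is irreducible; and the irreducible elements of the countably generated free abelian monoid $P$ are precisely its free generators. For such a $p$, Lemma~\ref{lem:loc homeo right reconstruction} furnishes a finite partition of unity $(v_i)_{1\le i\le n}$ subordinate to an open cover trivialising $\theta_p$, and $\nu_i:=(N_pv_i)^{1/2}$ satisfies $\sum_i\nu_i\,\alpha_p\circ L_p(\nu_i f)=f$ for all $f\in C(X)$, i.e.\ $(\nu_i)_{1\le i\le n}$ is a finite Parseval frame for $\CX_p$ in the sense of Definition~\ref{def:ONB HB}~(2). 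Hence $\CX$ is a product system of finite type (and in particular compactly aligned, by Remark~\ref{rem:fin type is cp aligned}), which completes the plan.
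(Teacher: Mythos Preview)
Your proposal is correct and follows essentially the same route as the paper: both construct the isomorphism $\CX_p\otimes_{C(X)}\CX_q\to\CX_{pq}$ via $f\otimes g\mapsto f\alpha_p(g)$, verify it preserves inner products using $L_{pq}=L_q\circ L_p$ and the transfer-operator identity, and then invoke Lemma~\ref{lem:loc homeo right reconstruction} for finite type. Your write-up is slightly more explicit in a few places (the $p=1_P$ case, associativity of the monoid multiplication, and restricting the Parseval-frame check to irreducible $p$ via Remark~\ref{rem:ONBs tensored}), but there is no substantive difference in strategy.
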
 
\begin{proof}
To see that $\CX_p$ is an essential Hilbert bimodule, we recall that the transfer operator $L_p$, given by $L_p(f)(x) = \frac{1}{N_p} \sum_{y \in \theta_p^{-1}(x)}\overline{f}(y)$,
is a positive, linear map such that $L_p(f\alpha_p(g)) = L_p(f)g$ holds for all $f,g \in C(X)$. Thus, we can use \cite{LR}*{Lemma 3.3} to conclude that the seminorm $\|f\|_p := \langle f,f\rangle_p^{\frac{1}{2}}$  on $C(X)$ is equivalent to $\|\cdot\|_{\infty}$. Thus, $\langle\cdot,\cdot\rangle$ is positive definite on $C(X)$ and $C(X)$ is complete with respect to $\|\cdot\|_p$. The $\CX_p$ form a product system since 
\[\begin{array}{cllcl}
\CX_p \hspace*{-2mm}&\otimes_{C(X)} &\hspace*{-2mm} \CX_q & \stackrel{M_{p,q}}{\longrightarrow}& \CX_{pq}\\
f \hspace*{-2mm}&\otimes&\hspace*{-2mm} g &\mapsto & f\alpha_p(g)
\end{array}\]
defines an isomorphism of Hilbert bimodules. Indeed, the left action is the same on both sides and 
\[M_{p,q}((f \otimes g).h) = M_{p,q}(f \otimes g\alpha_q(h)) = f\alpha_p(g)\alpha_{pq}(h) = M_{p,q}(f \otimes g).h\]
shows that the right actions match. Finally, the inner products coincide as
\[\begin{array}{lcl}
\langle M_{p,q}(f \otimes g),M_{p,q}(f' \otimes g')\rangle_{pq} &=& \langle f\alpha_p(g),f'\alpha_p(g')\rangle_{pq}\vspace*{2mm}\\ 
&=& L_{pq}\left(\alpha_p(\overline{g})\overline{f}f'\alpha_p(g')\right)\vspace*{2mm}\\
&=& L_{q}\left(\overline{g}L_p(\overline{f}f')g'\right)\vspace*{2mm}\\
&=& \langle g,\phi_q\left(\langle f,f'\rangle_p\right)g'\rangle_q \vspace*{2mm}\\
&=& \langle f \otimes g,f' \otimes g'\rangle_{\CX_p \otimes_{C(X)} \CX_q}.
\end{array}\] 
This shows that we have an injective morphism of Hilbert bimodules. Due to the structure of the balanced tensor product, $f \otimes g = f\alpha_p(g) \otimes 1_{C(X)}$ and $\alpha_p(1_{C(X)}) = 1_{C(X)}$, so $M_{p,q}$ is surjective as well. Thus, $\CX$ is a product system over $P$ with coefficients in $C(X)$. Lastly, $\CX$ is seen to be of finite type by appealing to Lemma~\ref{lem:loc homeo right reconstruction}.
\end{proof}

\begin{theorem}\label{thm:isom ad-hoc PS for SIDoFT}
Suppose $(X,P,\theta)$ is an irreversible $*$-commutative dynamical system of finite type, let $\CX$ denote the product system constructed in Proposition~\ref{prop:PS for an SIDoFT}. Then the map 
\[\begin{array}{rcl}
\CO[X,P,\theta] &\stackrel{\varphi}{\longrightarrow}& \CO_{\CX}\\
fs_p &\mapsto& \iota_{\CO_{\CX},p}(f)
\end{array} \]
is an isomorphism.
\end{theorem}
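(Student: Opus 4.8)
The plan is to produce mutually inverse $*$-homomorphisms between $\CO[X,P,\theta]$ and $\CO_{\CX}$. Throughout I would use that $\CX$ is of finite type, so that Corollary~\ref{cor:equiv of cov cond - fin type} allows me to replace $(\text{CNP})$-covariance by the more tractable $(CP_F)$-covariance, and that each fiber $\CX_p$ carries the finite Parseval frame $(\nu_i)_{1\le i\le n}$, $\nu_i=(N_p v_i)^{1/2}$, coming from Lemma~\ref{lem:loc homeo right reconstruction}, which satisfies $\sum_i\Theta_{\nu_i,\nu_i}=1_{\CL(\CX_p)}$ by Lemma~\ref{lem:ONB gives matrix units}.

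First I would construct a homomorphism $\psi\colon\CO_{\CX}\to\CO[X,P,\theta]$. Set $\pi_p\colon\CX_p=C(X)_{\alpha_p}\to\CO[X,P,\theta]$, $\pi_p(f)=fs_p$. Relations (I) and (II) make $(\pi_p)_{p\in P}$ a Toeplitz representation: $\pi_{1_P}$ is the canonical inclusion of $C(X)$; $\pi_p(f)^*\pi_p(g)=s_p^*\bar f g s_p=L_p(\bar f g)=\pi_{1_P}(\langle f,g\rangle_p)$; and $\pi_p(f)\pi_q(g)=f\alpha_p(g)s_{pq}=\pi_{pq}(f\otimes g)$. Using $\phi_p(f)=\sum_i\Theta_{f\nu_i,\nu_i}$ and relation (IV) (equivalently, Lemma~\ref{lem:CNP equiv}), one gets $\psi_{\pi,p}(\phi_p(f))=\sum_i f\nu_is_ps_p^*\nu_i=f=\pi_{1_P}(f)$; since $\phi_p^{-1}(\CK(\CX_p))=C(X)$ in the finite-type case, $\pi$ is $(CP_F)$-covariant, hence $(\text{CNP})$-covariant by Corollary~\ref{cor:equiv of cov cond - fin type}. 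The universal property of $\CO_{\CX}$ then yields $\psi$ with $\psi(\iota_{\CO_{\CX},p}(f))=fs_p$.

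Next I would construct the map $\varphi$ of the theorem. Put $t_p:=\iota_{\CO_{\CX},p}(1_{C(X)})$ and view $C(X)\subset\CO_{\CX}$ via $\iota_{\CO_{\CX},1_P}$. Since $\iota_{\CO_{\CX},1_P}(1_{C(X)})$ acts as a two-sided unit on each $\iota_{\CO_{\CX},p}(\CX_p)$, hence on the dense subalgebra they generate, it is the unit of $\CO_{\CX}$; consequently $t_p^*t_p=\iota_{\CO_{\CX},1_P}(\langle1,1\rangle_p)=1$, so each $t_p$ is an isometry and $(t_p)$ represents $P$. Relations (I) and (II) for $(C(X),(t_p))$ are immediate from the Toeplitz identities and the module actions on $\CX_p$, and relation (IV) follows from $(CP_F)$-covariance of the universal representation via $\sum_i\nu_it_pt_p^*\nu_i=\psi_{\iota_{\CO_{\CX}},p}(1_{\CL(\CX_p)})=\iota_{\CO_{\CX},1_P}(1)=1$ together with the argument of Lemma~\ref{lem:CNP equiv}. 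The one genuinely nontrivial point is relation (III). For relatively prime $p,q$, Proposition~\ref{prop:star-com gives comm endo and transfer op} gives $L_p\circ\alpha_q=\alpha_q\circ L_p$; combining this with (I), (II), the identities $\sum_i\nu_it_pt_p^*\nu_i=1$ and $\sum_i\nu_i\alpha_p(L_p(\nu_i))=1$ (Lemma~\ref{lem:loc homeo right reconstruction} with $f=1$), and the adjoint form $gt_p^*=t_p^*\alpha_p(g)$ of (I), the computation runs
\begin{align*}
t_p^*t_q &= t_p^*t_q\sum_i\nu_it_pt_p^*\nu_i = \sum_i\bigl(t_p^*\alpha_q(\nu_i)t_p\bigr)t_qt_p^*\nu_i = \sum_i L_p\bigl(\alpha_q(\nu_i)\bigr)t_qt_p^*\nu_i\\
&= \sum_i\alpha_q\bigl(L_p(\nu_i)\bigr)t_qt_p^*\nu_i = t_q\sum_i L_p(\nu_i)t_p^*\nu_i = t_q\sum_i t_p^*\alpha_p\bigl(L_p(\nu_i)\bigr)\nu_i = t_qt_p^*.
\end{align*}
(In particular this shows that (III) is redundant in Definition~\ref{def:O[X,P,theta]}.) The universal property of $\CO[X,P,\theta]$ now produces $\varphi$ with $\varphi(f)=\iota_{\CO_{\CX},1_P}(f)$ for $f\in C(X)$ and $\varphi(s_p)=t_p$, so that $\varphi(fs_p)=\iota_{\CO_{\CX},p}(f)$.

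It then remains to observe that $\psi\circ\varphi$ fixes every generator $fs_p$ of $\CO[X,P,\theta]$ and $\varphi\circ\psi$ fixes every generator $\iota_{\CO_{\CX},p}(f)$ of $\CO_{\CX}$, so that $\varphi$ and $\psi$ are mutually inverse $*$-isomorphisms. I expect relation (III) to be the main obstacle: one must see that the generating isometries of $\CO_{\CX}$ doubly commute for relatively prime indices, and this is exactly the step where $*$-commutativity of the $\theta_p$ enters, through the intertwining $L_p\alpha_q=\alpha_q L_p$ of Proposition~\ref{prop:star-com gives comm endo and transfer op}.
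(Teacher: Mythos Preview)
Your proof is correct and follows the same overall architecture as the paper's: produce $\varphi$ by verifying relations (I)--(IV) for the images $t_p=\iota_{\CO_\CX,p}(1)$, produce its inverse $\psi$ by checking that $\pi_p(f)=fs_p$ is a $(CP_F)$-covariant Toeplitz representation, and observe that the two compositions fix generators. Your treatment of (I), (II), (IV) and of $\psi$ matches the paper's essentially verbatim.

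The genuine difference is in relation (III). The paper argues indirectly: it rewrites $t_p^*t_q=t_qt_p^*$ as $\psi_{\iota_{\CO_\CX},p}(\Theta_{1,1})\psi_{\iota_{\CO_\CX},q}(\Theta_{1,1})=\psi_{\iota_{\CO_\CX},pq}(\Theta_{1,1})$, invokes Nica covariance to transport this to a statement about $\iota_p^{pq}(\Theta_{1,1})\iota_q^{pq}(\Theta_{1,1})$ in $\CL(\CX_{pq})$, and then carries out a rather involved operator computation with Parseval frames for both $\theta_p$ and $\theta_q$, eventually reducing to $\sum_i\Theta_{\alpha_q(E_p(\nu_i)),\alpha_q(\nu_i)}=\Theta_{1,1}$. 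Your argument is more elementary and more economical: it works entirely inside $\CO_\CX$, uses only the relations (I), (II), (IV) already established for $(C(X),(t_p))$, the commutativity $t_pt_q=t_qt_p$ coming from $P$ being abelian, and the single instance of $*$-commutativity encoded in $L_p\alpha_q=\alpha_q L_p$. In particular, your computation never appeals to Nica covariance and needs a Parseval frame for only one of the two maps. A pleasant byproduct, which you note, is that relation (III) in Definition~\ref{def:O[X,P,theta]} is in fact redundant: it follows from (I), (II), (IV) and $s_ps_q=s_{pq}=s_qs_p$. The paper's route, on the other hand, makes the role of Nica covariance visible and stays closer to the product-system formalism, which may be preferable if one wants to see how the argument would adapt beyond the finite-type setting.
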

\begin{proof}
The idea is to exploit universal properties on both sides. We begin by showing that $(\iota_{\CO_\CX,p}(1))_{p \in P}$ and $\iota_{\CO_\CX,1_P}(C(X))$ induce $\varphi$. First of all, note that 
\[\iota_{\CO_\CX,p}(1)^*\iota_{\CO_\CX,p}(1) = \iota_{\CO_\CX,1_P}(\langle 1,1 \rangle_p) = \iota_{\CO_\CX,1_P}(L_p(1)) = 1_{\CO_\CX}\]
and $\iota_{\CO_\CX,1_P}$ is a unital $\ast$-homomorphism. Conditions (I),(II) and (IV) are immediate:
\[\begin{array}{lrclcl}
\text{(I)}\hspace*{-2mm}& \iota_{\CO_\CX,1_P}(\alpha_p(f))\iota_{\CO_\CX,p}(1) &\hspace*{-2.5mm}=\hspace*{-2.5mm}& \iota_{\CO_\CX,p}(\alpha_p(f)) &\hspace*{-2.5mm}=\hspace*{-2.5mm}& \iota_{\CO_\CX,p}(1)\iota_{\CO_\CX,1_P}(f)\vspace*{2mm}\\
\text{(II)}\hspace*{-2mm}& \iota_{\CO_\CX,p}(1)^*\iota_{\CO_\CX,1_P}(f)\iota_{\CO_\CX,p}(1) &\hspace*{-2.5mm}=\hspace*{-2.5mm}& \iota_{\CO_\CX,1_P}(\langle 1,f \rangle_p) &\hspace*{-2.5mm}=\hspace*{-2.5mm}& \iota_{\CO_\CX,1_P}(L_p(f))\vspace*{2mm}\\
\text{(IV)}\hspace*{-2mm}&\multicolumn{5}{l}{\text{Whenever $f_{i,j} \in C(X)$, where $i = 1,\dots,n$ and $j = 1,2$,}}\\
&\multicolumn{5}{l}{\text{satisfy the reconstruction formula for $p \in P$, then}}\\
\hspace*{-2mm}&\multicolumn{5}{l}{\sum\limits_{1 \leq i \leq n} \iota_{\CO_\CX,1_P}(f_{i,1})\iota_{\CO_\CX,p}(1)\iota_{\CO_\CX,p}(1)^*\iota_{\CO_\CX,1_P}(f_{i,2})^*}\vspace*{2mm}\\
&\hspace*{2mm}= \psi_{\iota_{\CO_\CX},p}(\sum\limits_{1 \leq i \leq n}\Theta_{f_{i,1},f_{i,2}})&\hspace*{-2.5mm}=\hspace*{-2.5mm}& \psi_{\iota_{\CO_\CX},p}(\phi_p(1))
&\hspace*{-2.5mm}=\hspace*{-2.5mm}& \iota_{\CO_\CX,1_P}(1) = 1_{\CO_\CX}
\end{array}\]
by $(CP_F)$-covariance of $\iota_{\CO_\CX}$, see Definition~\ref{def:CNP cov} and Corollary~\ref{cor:equiv of cov cond - fin type}. Proving (III) is substantially harder. We need to show that the isometries corresponding to relatively prime $p,q \in P$ are doubly commuting. Since $\iota_{\CO_\CX,p}(1)$ and $\iota_{\CO_\CX,q}(1)$ are isometries, (III) is equivalent to 
\[\psi_{\iota_{\CO_\CX},p}(\Theta_{1,1})\psi_{\iota_{\CO_\CX},q}(\Theta_{1,1}) = \psi_{\iota_{\CO_\CX},pq}(\Theta_{1,1}).\]
Nica covariance of $\iota_{\CO_\CX}$ implies that this is in turn the same as
\[\psi_{\iota_{\CO_\CX},pq}\left(\iota_p^{pq}(\Theta_{1,1})\iota_q^{pq}(\Theta_{1,1})\right) = \psi_{\iota_{\CO_\CX},pq}(\Theta_{1,1}).\]
We now fix $(\nu_i)_{i \in I}$ with $I$ finite for $\theta_p$ as in Lemma~\ref{lem:loc homeo right reconstruction}. In the same way, we choose $(\mu_j)_{j \in J}$ for $\theta_q$. Then Lemma~\ref{lem:loc homeo right reconstruction} says that these two families satisfy the reconstruction formula for $p$ and $q$, respectively. Therefore, they fulfill
$\sum_{i \in I}\Theta_{\nu_i,\nu_i} = 1_{\CL(\CX_p)}$ and $\sum_{j \in J}\Theta_{\mu_j,\mu_j} = 1_{\CL(\CX_q)}$. Next, we compute
\[\begin{array}{lcl}
\psi_{\iota_{\CO_\CX},pq}\left(\iota_p^{pq}(\Theta_{1,1})\iota_q^{pq}(\Theta_{1,1})\right) &\hspace*{-3mm}=\hspace*{-3mm}& \sum\limits_{\substack{i \in I\\j \in J}}\psi_{\iota_{\CO_\CX},pq}\left(\Theta_{\alpha_p(\mu_j),\alpha_p(\mu_j)}\Theta_{\alpha_q(\nu_i),\alpha_q(\nu_i)}\right)\\
&\hspace*{-3mm}=\hspace*{-3mm}& \sum\limits_{\substack{i \in I\\j \in J}}\psi_{\iota_{\CO_\CX},pq}\left(\Theta_{\alpha_p(\mu_j\alpha_q(L_{pq}(\alpha_p(\mu_j)\alpha_q(\nu_i)))),\alpha_q(\nu_i)}\right)\\
&\hspace*{-3mm}=\hspace*{-3mm}& \sum\limits_{\substack{i \in I\\j \in J}}\psi_{\iota_{\CO_\CX},pq}\left(\Theta_{\alpha_p(\mu_jE_q(\mu_j L_p(\alpha_q(\nu_i)))),\alpha_q(\nu_i)}\right)\\
&\hspace*{-3mm}=\hspace*{-3mm}& \sum\limits_{i \in I}\psi_{\iota_{\CO_\CX},pq}\left(\Theta_{E_p(\alpha_q(\nu_i)),\alpha_q(\nu_i)}\right),\\
\end{array}\] 
where we used the (internal) reconstruction formula for $(\mu_j)_{j \in J}$ in the last step, compare Lemma~\ref{lem:loc homeo right reconstruction}. Since $p$ and $q$ are relatively prime, $\theta_p$ and $\theta_q$ $\ast$-commute by Definition~\ref{def:O[X,P,theta]}. So Proposition~\ref{prop:star-com gives comm endo and transfer op} implies that $E_p(\alpha_q(f)) = \alpha_q(E_p(f))$ holds for all $f \in C(X)$. Therefore, we have shown that  
\[\begin{array}{c} \psi_{\iota_{\CO_\CX},p}(\Theta_{1,1})\psi_{\iota_{\CO_\CX},q}(\Theta_{1,1}) = \psi_{\iota_{\CO_\CX},pq}\left(\sum\limits_{i \in I} \Theta_{\alpha_q(E_p(\nu_i)),\alpha_q(\nu_i)}\right). \end{array}\]
Applying $\sum\limits_{i \in I}\Theta_{\alpha_q(E_p(\nu_i)),\alpha_q(\nu_i)}$ to an element $f \in \CX_{pq}$ takes the form
\[\begin{array}{c} \sum\limits_{i \in I}\alpha_q(E_p(\nu_i))\alpha_q(E_p(\nu_i L_q(f))) = \sum\limits_{i \in I}\alpha_{pq}(L_p(\nu_i)L_p(\nu_iL_q(f))). \end{array}\]
From Lemma~\ref{lem:loc homeo right reconstruction} and $L_p(g E_p(h)) = L_p(g)L_p(h)$ for arbitrary $g,h \in C(X)$, see Definition~\ref{def:transfer operator}, we deduce  
\[\begin{array}{c}  \sum\limits_{i \in I}\Theta_{\alpha_q(E_p(\nu_i)),\alpha_q(\nu_i)}(f) = E_{pq}(f) = \Theta_{1,1}(f) \text{ in } \CX_{pq}. \end{array}\]
Since $f$ was arbitrary, we get 
\[\begin{array}{c} \sum\limits_{i \in I}\Theta_{\alpha_q(E_p(\nu_i)),\alpha_q(\nu_i)} = \Theta_{1,1} \text{ in } \CL(\CX_{pq}) \end{array}\]
and hence (III) holds. This shows that the map $\varphi$ is a $\ast$-homomorphism from $\CO[X,P,\theta]$ onto $\CO_{\CX}$. For the reverse direction, we show that 
\[\begin{array}{rccl}
\varphi_{\text{CNP},p}:& \CX_p &\longrightarrow& \CO[X,P,\theta]\\
& f &\mapsto& fs_p
\end{array}\]
defines a $(\text{CNP})$-covariant representation of $\CX$. Clearly, $\varphi_{\text{CNP}}$ satisfies (1) and (2) from Definition~\ref{def:rep of prod sys}. For (3), note that 
\[\varphi_{\text{CNP},p}(f)^*\varphi_{\text{CNP},p}(g) = s_p^*\overline{f}gs_p \stackrel{(III)}{=} L_p(\overline{f}g) = \varphi_{\text{CNP},1_P}(\langle f,g \rangle_p)\]
holds for all $p \in P$ and $f,g \in C(X)$. (4) follows from 
\[\varphi_{\text{CNP},p}(f)\varphi_{\text{CNP},q}(g) = fs_pgs_q \stackrel{(II)}{=} f\alpha_p(g)s_{pq} = \varphi_{\text{CNP},pq}(f\alpha_p(g)).\]
We only have to show $(CP_F)$-covariance in order to get that $\tilde{\varphi}$ is $(\text{CNP})$-covariant. To verify this, we fix $(\nu_i)_{I \in I} \subset C(X)$ with $I$ finite for $p \in P$ as in Lemma~\ref{lem:loc homeo right reconstruction} and obtain  
\[\begin{array}{lclcl}
\psi_{\varphi_{\text{CNP}},p}(\phi_p(f)) &=& \psi_{\varphi_{\text{CNP}},p}\left(\sum\limits_{i \in I}\Theta_{f\nu_i,\nu_i} \right) &=& f\sum\limits_{i \in I} \nu_is_ps_p^*\nu_i\\
&\stackrel{(IV)}{=}& f &=& \varphi_{\text{CNP},1_P}(f)
\end{array}\]
for all $f \in C(X)$. Thus, $\varphi_{\text{CNP}}$ is a $(\text{CNP})$-covariant representation of $\CX$. It is apparent that the induced $\ast$-homomorphism  $\overline{\varphi}_{\text{CNP}}:\CO_\CX \longrightarrow \CO[X,P,\theta]$ is the inverse of $\varphi$.
\end{proof}

\noindent It is conceivable that a similar result holds for the Nica-Toeplitz algebra $\CN\CT_\CX$, where relation $(IV)$ has to be weakened in the natural way.

\section{\texorpdfstring{Characterizing topological freeness of $(X,P,\theta)$ with $\CO[X,P,\theta]$}{Characterizing topological freeness}}
\noindent
In this section we establish an equivalence between topological freeness for irreversible $*$-commutative dynamical systems of finite type $(X,P,\theta)$ and three different C*-algebraic properties of $\CO[X,P,\theta]$, see Theorem~\ref{thm:top free char}. The proof of this result essentially relies on Proposition~\ref{prop:top free gives IIP}, where we prove that topological freeness of $(X,P,\theta)$ implies the ideal intersection property for $C(X)$ in $\CO[X,P,\theta]$. Moreover, we need the technical Lemma~\ref{lem:tech lemma top free}, which uses a faithful version $\tilde{\varphi}$ of the representation $\varphi$ from Proposition~\ref{prop:elementary rep SIDoFT}. In fact, Lemma~\ref{lem:tech lemma top free} is a straightforward generalization of \cite{CS}*{Lemma 5} to the setting of irreversible $*$-commutative dynamical systems of finite type.

Recall that $P$ is an Ore semigroup with enveloping group $P^{-1}P$ denoted by $H$. In the following, $(\xi_{x,h})_{(x,h) \in X{\times}H}$ denotes the standard orthonormal basis of $\ell^2(X{\times}H)$.

\begin{proposition}\label{prop:augm rep}
For $f \in C(X), (x,h) \in X\times H$, let $\tilde{M}_{f} \xi_{x,h} := f(x) \xi_{x,h}$ and $\tilde{S}_{p} \xi_{x,h} = N_{p}^{-\frac{1}{2}} \sum_{y \in \theta_{p}^{-1}(x)}{e_{y,ph}}$. Then $fs_p \mapsto \tilde{M}_f\tilde{S}_p$ defines a representation $\tilde{\varphi}$ of $\CO[X,P,\theta]$ on $\ell^{2}(X \times H)$, which is faithful on $C(X)$.
\end{proposition}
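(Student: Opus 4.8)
The plan is to follow the proof of Proposition~\ref{prop:elementary rep SIDoFT} almost verbatim, the only new feature being the bookkeeping of the $H$-coordinate (I will write $\xi_{y,ph}$ for what the statement denotes $e_{y,ph}$, i.e.\ the standard basis vectors of $\ell^2(X \times H)$). First I would record the adjoint: the computation $\langle \tilde{S}_p^*\xi_{x,h},\xi_{y,k}\rangle = \langle \xi_{x,h},\tilde{S}_p\xi_{y,k}\rangle = N_p^{-1/2}\,\chi_{\theta_p^{-1}(y)}(x)\,\chi_{\{p^{-1}h\}}(k)$ shows that $\tilde{S}_p^*\xi_{x,h} = N_p^{-1/2}\xi_{\theta_p(x),p^{-1}h}$, which is well defined precisely because $H = P^{-1}P$ is a group. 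From this, $\tilde{S}_p^*\tilde{S}_p\xi_{x,h} = N_p^{-1}\sum_{y \in \theta_p^{-1}(x)}\xi_{x,h} = \xi_{x,h}$, so each $\tilde{S}_p$ is an isometry. The assignment $p \mapsto \tilde{S}_p$ is multiplicative: expanding $\tilde{S}_p\tilde{S}_q\xi_{x,h}$ and using that the pairs $(y,z)$ with $y \in \theta_q^{-1}(x)$ and $z \in \theta_p^{-1}(y)$ are in bijection with $\theta_{pq}^{-1}(x)$, together with $N_{pq} = N_pN_q$, yields $\tilde{S}_p\tilde{S}_q = \tilde{S}_{pq}$, with the group coordinate simply advancing by $pq$.

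Next I would verify relations (I)--(IV) of Definition~\ref{def:O[X,P,theta]} on basis vectors, exactly as in Proposition~\ref{prop:elementary rep SIDoFT}. Relation (I), $\tilde{S}_p\tilde{M}_f = \tilde{M}_{\alpha_p(f)}\tilde{S}_p$, and relation (II), $\tilde{S}_p^*\tilde{M}_f\tilde{S}_p = \tilde{M}_{L_p(f)}$, are immediate from $\alpha_p(f)(y) = f(\theta_p(y))$ and $L_p(f)(x) = N_p^{-1}\sum_{y \in \theta_p^{-1}(x)}f(y)$; the $H$-coordinate passes through unchanged in (I) and cancels in (II). For (III), if $p$ and $q$ are relatively prime then $\theta_p$ and $\theta_q$ $\ast$-commute by Definition~\ref{def:SID}~(C), so by condition (iii) of Proposition~\ref{prop: eq star ind} the map $\theta_p\colon \theta_q^{-1}(x) \to \theta_q^{-1}(\theta_p(x))$ is a bijection for every $x \in X$; substituting this into both $\tilde{S}_p^*\tilde{S}_q\xi_{x,h}$ and $\tilde{S}_q\tilde{S}_p^*\xi_{x,h}$ gives $N_{pq}^{-1/2}\sum_{z \in \theta_q^{-1}(\theta_p(x))}\xi_{z,p^{-1}qh}$ in both cases, where commutativity of $H$ is used to identify $qp^{-1}h$ with $p^{-1}qh$. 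For (IV) I would invoke Lemma~\ref{lem:CNP equiv}: it suffices to exhibit one family satisfying the reconstruction formula whose associated C*-algebraic partition of unity maps to $1$, and for $\nu_i = (N_pv_i)^{1/2}$ coming from a partition of unity $(v_i)$ subordinate to a finite open cover on which $\theta_p$ is injective, one computes $\sum_i \tilde{M}_{\nu_i}\tilde{S}_p\tilde{S}_p^*\tilde{M}_{\nu_i}\xi_{x,h} = \sum_i v_i(x)\,\xi_{x,h} = \xi_{x,h}$, using that $v_i(x)v_i(y) = 0$ whenever $\theta_p(x) = \theta_p(y)$ and $x \neq y$; since the argument in Lemma~\ref{lem:CNP equiv} only uses relation (II), which has already been checked, this establishes (IV).

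By the universal property of $\CO[X,P,\theta]$, the validity of (I)--(IV) shows that $fs_p \mapsto \tilde{M}_f\tilde{S}_p$ extends to a $\ast$-homomorphism $\tilde{\varphi}$ on $\ell^2(X \times H)$. Finally, $\tilde{\varphi}$ is faithful on $C(X)$ since $\tilde{M}_f = 0$ forces $f(x) = 0$ for every $x \in X$, because $H \neq \emptyset$. I do not expect any genuine obstacle here: the entire argument is a routine transcription of Proposition~\ref{prop:elementary rep SIDoFT}, and the only point demanding care is keeping the $H$-coordinates consistent throughout---in particular the fact that $\tilde{S}_p^*$ shifts the group coordinate by $p^{-1}$ on the nose, which is where one genuinely uses that $H$ is a group and not merely a cancellative monoid.
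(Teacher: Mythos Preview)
Your proposal is correct and follows exactly the approach the paper takes: the paper's proof simply records the adjoint formula $\tilde{S}_p^*\xi_{x,h} = N_p^{-1/2}\xi_{\theta_p(x),p^{-1}h}$ and then remarks that the proof of Proposition~\ref{prop:elementary rep SIDoFT} carries over verbatim. You have spelled out precisely those verbatim details, with the only additional ingredient being the bookkeeping of the $H$-coordinate, which you handle correctly.
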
 
\begin{proof}
As ${\tilde{S}_{p}}^{*} \xi_{x,h} = N_{p}^{-\frac{1}{2}} \xi_{\theta_{p}(x),p^{-1}h}$, the proof of Proposition~\ref{prop:elementary rep SIDoFT} carries over verbatim.
\end{proof}

\begin{remark}\label{rem:gauge-inv transfer}
As in \cite{CS}*{Proposition 4}, we would like to show that $\tilde{\varphi}$ is faithful by using a gauge-invariant uniqueness theorem. For this purpose, let us recall that Theorem~\ref{thm:isom ad-hoc PS for SIDoFT} asserts that $\CO[X,P,\theta]$ is the Cuntz-Nica-Pimsner algebra for the product system of Hilbert bimodules associated to $(X,P,\theta)$ in Proposition~\ref{prop:PS for an SIDoFT}. We intend to make use of \cite{CLSV}*{Corollary 4.12 (iv)} and remark that the terminology related to coactions can be phrased in terms of actions of the dual group of the discrete, abelian group $H = P^{-1}P$, which we denote by $L$. Under this transformation, the coaction $\delta$ in \cite{CLSV}*{Proposition 3.5} corresponds to the natural gauge action $\gamma$ of $L$ on $\CO[X,P,\theta]$ given by  $\gamma_\ell(f) = f \text{ and } \gamma_\ell(s_{p}) = \ell(p)s_{p}$ for $f \in C(X),p \in P$ and $\ell \in L$. Thus \cite{CLSV}*{Definition 4.10} and \cite{CLSV}*{Corollary 4.12 (iv)} imply that $\CO[X,P,\theta]$ has the following gauge-invariant uniqueness property: A surjective $\ast$-homomorphism $\phi:\CO[X,P,\theta] \longrightarrow B$ onto a C*-algebra $B$ is injective if and only if the following two conditions hold:
\begin{enumerate}
\item[a)] There is an $L$-action $\beta$ on $B$ for which $\phi$ is $(\gamma,\beta)$-equivariant. 
\item[b)] $\phi$ is faithful on $C(X)$. 
\end{enumerate} 
\end{remark}

\noindent This enables us to prove the analogue of \cite{CS}*{Proposition 4}:

\begin{proposition}\label{prop:augm rep is faithful}
The representation $\tilde{\varphi}: X \longrightarrow \CL(\ell^2(X \times H))$ is faithful.
\end{proposition}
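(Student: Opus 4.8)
The plan is to verify conditions a) and b) of the gauge-invariant uniqueness property recorded in Remark~\ref{rem:gauge-inv transfer} for the surjective $\ast$-homomorphism $\tilde{\varphi}:\CO[X,P,\theta]\longrightarrow \tilde{\varphi}(\CO[X,P,\theta])=:B\subset \CL(\ell^2(X\times H))$. Condition b), faithfulness on $C(X)$, is already part of the conclusion of Proposition~\ref{prop:augm rep}, so nothing remains to be done there. For condition a) I would exhibit an explicit $L$-action $\beta$ on $B$ making $\tilde{\varphi}$ equivariant with respect to the gauge action $\gamma$ on $\CO[X,P,\theta]$.

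The key step is to build $\beta$ from a unitary representation of $L$ on $\ell^2(X\times H)$. For $\ell\in L$ define the diagonal unitary $W_\ell$ on $\ell^2(X\times H)$ by $W_\ell\xi_{x,h}=\ell(h)\,\xi_{x,h}$, where $\ell(h)$ makes sense since $L=\hat H$; the assignment $\ell\mapsto W_\ell$ is a (strongly continuous) unitary representation of $L$. Then I would check that conjugation by $W_\ell$ fixes each $\tilde M_f$ and sends $\tilde S_p$ to $\ell(p)\tilde S_p$: indeed $W_\ell\tilde M_f W_\ell^*\xi_{x,h}=f(x)\xi_{x,h}$, and
\[
W_\ell\tilde S_p W_\ell^*\xi_{x,h}
= \overline{\ell(h)}\,N_p^{-\frac12}\sum_{y\in\theta_p^{-1}(x)}\ell(ph)\,\xi_{y,ph}
= \ell(p)\,\tilde S_p\xi_{x,h},
\]
using $\ell(ph)=\ell(p)\ell(h)$. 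Consequently $\operatorname{Ad}W_\ell$ maps the generating set $\{\tilde M_f\tilde S_p\}$ of $B$ into $B$, hence restricts to an automorphism $\beta_\ell$ of $B$; strong continuity of $\ell\mapsto W_\ell$ gives strong continuity of $\beta$. By construction $\tilde\varphi\circ\gamma_\ell=\beta_\ell\circ\tilde\varphi$ on the generators $fs_p$, hence everywhere, so $\tilde\varphi$ is $(\gamma,\beta)$-equivariant and condition a) holds.

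With both conditions a) and b) verified, the gauge-invariant uniqueness property from Remark~\ref{rem:gauge-inv transfer} applies and yields that $\tilde\varphi$ is injective, i.e. faithful. I do not anticipate a serious obstacle here: the only point requiring a little care is confirming that $\operatorname{Ad}W_\ell$ genuinely preserves $B$ (rather than merely acting on the ambient $\CL(\ell^2(X\times H))$), which follows since it fixes a generating set of $B$ setwise, and checking strong continuity of $\beta$, which reduces to strong continuity of $\ell\mapsto W_\ell$ on basis vectors. This is the direct analogue of \cite{CS}*{Proposition 4}, and the argument there transfers essentially verbatim.
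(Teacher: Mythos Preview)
Your proposal is correct and follows essentially the same route as the paper: define the diagonal unitaries $W_\ell\xi_{x,h}=\ell(h)\xi_{x,h}$ (the paper calls them $U_\ell$), check that conjugation by them implements the gauge action on the generators $\tilde M_f,\tilde S_p$, and then invoke the gauge-invariant uniqueness property from Remark~\ref{rem:gauge-inv transfer} together with faithfulness on $C(X)$ from Proposition~\ref{prop:augm rep}. Your write-up is in fact slightly more detailed than the paper's in verifying the conjugation formula and noting that $\operatorname{Ad}W_\ell$ preserves $B$.
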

\begin{proof}
Faithfulness of $\tilde{\varphi}$ on $C(X)$ has been established in Proposition~\ref{prop:augm rep}. For $\ell \in L$ define $U_\ell \in \CL\left(\ell^{2}(X \times H)\right)$ by $U_\ell \xi_{x,h} = \ell(h) \xi_{x,h}$. This gives a unitary representation of $L$ and enables us to define an action $\beta$ of $L$ on $\CL\left(\ell^{2}(X \times H)\right)$ via $\beta_\ell\left(T\right) = U_\ell T U_\ell^*$. We observe that, on $\tilde{\varphi}\left(\CO[X,P,\theta]\right)$, $\beta$ is given by $\beta_\ell(\tilde{M}_{f}) = \tilde{M}_{f} \text{ and } \beta_\ell(\tilde{S}_{p}) = \ell(p) \tilde{S}_{p}$ for all $f \in C(X)$ and $p \in P$. Thus $\tilde{\varphi}$ is $(\gamma,\beta)$-equivariant. According to the conclusion of Remark~\ref{rem:gauge-inv transfer}, $\tilde{\varphi}$ is faithful on all of $\CO[X,P,\theta]$.
\end{proof}

\noindent Recall from Corollary~\ref{cor:cond exp to C(X)}, that $G:\CO[X,P,\theta] \longrightarrow C(X)$ is the conditional expectation given by $G(fs_ps_q^*g) = \delta_{p q}~N_p^{-1} fg$.

\begin{lemma}\label{lem:tech lemma top free}
Let $\tilde{\varphi}$ be the representation from Proposition~\ref{prop:augm rep} and $a \in \CO[X,P,\theta]$. Then the following statements hold:
\begin{enumerate}[i)]
\item $\left\langle \tilde{\varphi}(a) \xi_{x,h}, \xi_{x,h} \right\rangle = G(a)(x)$ for all $(x,h) \in X \times H$. 
\item $\left\langle \tilde{\varphi}(a) \xi_{x_1,h_1}, \xi_{x_2,h_2} \right\rangle = 0$ holds for all $(x_1,h_1) \neq (x_2,h_2)$ if and only if $a \in C(X)$.
\item If $(x_1,h_1),(x_2,h_2) \in X \times H$ satisfy $\left\langle \tilde{\varphi}(a) \xi_{x_1,h_1}, \xi_{x_2,h_2} \right\rangle \neq 0$,	there are $p,q \in P$ and open neighbourhoods $U_{1}$ of $x_{1}$, $U_{2}$ of $x_{2}$ with the following properties:
\begin{enumerate}[(a)]
\item $ph_1 = qh_2$. 
\item $\theta_q(x_1) = \theta_p(x_2)$.
\item Whenever $x_{3} \in U_{1}$ and $x_{4} \in U_{2}$ satisfy $\theta_q(x_{3}) = \theta_p(x_{4})$, then $\left\langle \tilde{\varphi}(a) \xi_{x_3,h_1}, \xi_{x_4,h_2} \right\rangle \neq 0$.
\end{enumerate}	
\end{enumerate}
\end{lemma}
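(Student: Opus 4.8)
The plan is to reduce all three statements to the dense linear span of the elements $fs_ps_q^*g$, $f,g\in C(X)$, $p,q\in P$ (Lemma~\ref{lem:O-alg SIDoFT - lin span}), on which the formulas of Proposition~\ref{prop:augm rep} give, for $(x,h),(x',h')\in X\times H$,
\[
\langle\tilde\varphi(fs_ps_q^*g)\xi_{x,h},\xi_{x',h'}\rangle=
\begin{cases}
(N_pN_q)^{-1/2}\,g(x)f(x') & \text{if }h'=pq^{-1}h\text{ and }\theta_p(x')=\theta_q(x),\\
0 & \text{otherwise,}
\end{cases}
\]
as one sees by applying $\tilde M_g,\tilde S_q^*,\tilde S_p,\tilde M_f$ successively to $\xi_{x,h}$. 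Since $\tilde\varphi$ is a $\ast$-homomorphism it is norm-decreasing, so every matrix coefficient of $\tilde\varphi(a)$ is the uniform limit, over approximating finite sums $b=\sum_kf_ks_{p_k}s_{q_k}^*g_k$ with $\|a-b\|\to 0$, of the corresponding matrix coefficient of $\tilde\varphi(b)$.

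For i), taking $x'=x$ and $h'=h$ forces $p=q$ in the formula above, and it then reads $\delta_{pq}N_p^{-1}f(x)g(x)=G(fs_ps_q^*g)(x)$; by linearity, density and continuity of $G$ we get $\langle\tilde\varphi(a)\xi_{x,h},\xi_{x,h}\rangle=G(a)(x)$ for all $a$. For ii), if $a=f\in C(X)$ then $\tilde\varphi(f)=\tilde M_f$ is diagonal, so every off-diagonal coefficient vanishes; conversely, if they all vanish then by i) the operators $\tilde\varphi(a)$ and $\tilde\varphi(G(a))$ have the same matrix coefficients in the orthonormal basis $(\xi_{x,h})$, hence coincide, and faithfulness of $\tilde\varphi$ (Proposition~\ref{prop:augm rep is faithful}) yields $a=G(a)\in C(X)$.

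For iii), put $\delta:=h_2h_1^{-1}\in H$ and write $\delta=p_+p_-^{-1}$ with $p_+,p_-\in P$ and $p_+\wedge p_-=1_P$ (possible and unique since $P$ is free abelian). By the formula above, a term $f_ks_{p_k}s_{q_k}^*g_k$ contributes nothing to any coefficient between $\xi_{\cdot,h_1}$ and $\xi_{\cdot,h_2}$ unless $p_kq_k^{-1}=\delta$, in which case $p_k=t_kp_+$ and $q_k=t_kp_-$ for $t_k:=p_k\wedge q_k$. Fix $\varepsilon>0$ with $2\varepsilon<|\langle\tilde\varphi(a)\xi_{x_1,h_1},\xi_{x_2,h_2}\rangle|$, pick a finite sum $b=\sum_{k=1}^N f_ks_{p_k}s_{q_k}^*g_k$ with $\|a-b\|<\varepsilon$, and let $b'$ be the partial sum over $K:=\{k: p_kq_k^{-1}=\delta\}$; then $\langle\tilde\varphi(b')\xi_{x,h_1},\xi_{x',h_2}\rangle=\langle\tilde\varphi(b)\xi_{x,h_1},\xi_{x',h_2}\rangle$ for all $x,x'\in X$, so these coefficients are within $\varepsilon$ of those of $\tilde\varphi(a)$; in particular $\langle\tilde\varphi(a)\xi_{x_1,h_1},\xi_{x_2,h_2}\rangle\neq 0$ forces $K\neq\emptyset$. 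Set $T:=\bigvee_{k\in K}t_k$ and $r_k:=t_k^{-1}T\in P$. Using $s_{p_k}=s_{t_k}s_{p_+}$, $s_{q_k}^*=s_{p_-}^*s_{t_k}^*$, the commutation relation (I) (and its adjoint) to move functions past isometries, and the identity $\sum_i\nu_i^{(k)}s_{r_k}s_{r_k}^*\nu_i^{(k)}=1$ provided by relation (IV) for a partition of unity subordinate to $\theta_{r_k}$ (Lemma~\ref{lem:CNP equiv}), one rewrites
\[
f_ks_{p_k}s_{q_k}^*g_k=\sum_i\bigl(f_k\,\alpha_{p_k}(\nu_i^{(k)})\bigr)\,s_Ps_Q^*\,\bigl(\alpha_{q_k}(\nu_i^{(k)})\,g_k\bigr),\qquad P:=Tp_+,\ Q:=Tp_-,
\]
so that $b'=\sum_m F_m s_Ps_Q^* G_m$ for finitely many $F_m,G_m\in C(X)$. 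By the matrix coefficient formula, $\langle\tilde\varphi(b')\xi_{x,h_1},\xi_{x',h_2}\rangle$ equals $\Phi(x,x'):=(N_PN_Q)^{-1/2}\sum_m G_m(x)F_m(x')$ when $\theta_P(x')=\theta_Q(x)$ and is $0$ otherwise, and $\Phi\in C(X\times X)$.

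Now take $p:=P$ and $q:=Q$. Property a) holds because $ph_1=Tp_+h_1=Tp_-h_2=qh_2$ (as $p_+p_-^{-1}=\delta=h_2h_1^{-1}$). Since $\langle\tilde\varphi(b')\xi_{x_1,h_1},\xi_{x_2,h_2}\rangle$ is within $\varepsilon$ of $\langle\tilde\varphi(a)\xi_{x_1,h_1},\xi_{x_2,h_2}\rangle$ it is nonzero, which forces $\theta_P(x_2)=\theta_Q(x_1)$ — that is b) — and $|\Phi(x_1,x_2)|>|\langle\tilde\varphi(a)\xi_{x_1,h_1},\xi_{x_2,h_2}\rangle|-\varepsilon>\varepsilon$. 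By continuity of $\Phi$ the set $W:=\{(x,x')\in X\times X: |\Phi(x,x')|>\varepsilon\}$ is an open neighbourhood of $(x_1,x_2)$; choose open $U_1\ni x_1$ and $U_2\ni x_2$ with $U_1\times U_2\subseteq W$. If $x_3\in U_1$, $x_4\in U_2$ and $\theta_q(x_3)=\theta_p(x_4)$, then $\langle\tilde\varphi(b')\xi_{x_3,h_1},\xi_{x_4,h_2}\rangle=\Phi(x_3,x_4)$ has modulus $>\varepsilon$, whence $|\langle\tilde\varphi(a)\xi_{x_3,h_1},\xi_{x_4,h_2}\rangle|\geq|\Phi(x_3,x_4)|-\varepsilon>0$, proving c). The only genuinely delicate point is the rewriting of $b'$ into a sum involving the single exponent pair $(P,Q)$: this requires using that $p_+$ and $p_-$ are coprime, so that all surviving terms carry the same reduced numerator and denominator, and then inflating the individual common factors $t_k$ up to $T$ via relation (IV); once this normalization is in place the neighbourhoods are produced by mere continuity of a function on $X\times X$.
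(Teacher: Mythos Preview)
Your proof is correct. Parts i) and ii) are essentially identical to the paper's argument. For iii) both you and the paper approximate $a$ by a finite sum, isolate the terms whose $H$-shift matches $h_2h_1^{-1}$, and arrive at the same pair $p=\bigvee p_k$, $q=\bigvee q_k$; the difference lies in how the continuity estimate is organised.

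The paper keeps the contributing summands $f_is_{p_i}s_{q_i}^*g_i$ separate, chooses for each $i$ neighbourhoods on which $\theta_{p_i},\theta_{q_i}$ are injective and $|f_i(y_2)g_i(y_1)-f_i(x_2)g_i(x_1)|$ is small, intersects these, and closes with an $\varepsilon/3$--triangle inequality. You instead use relation~(IV) to inflate every contributing term to the \emph{single} exponent pair $(P,Q)$, so that the relevant matrix coefficient of $b'$ is the restriction to $\{\theta_P(x')=\theta_Q(x)\}$ of one continuous function $\Phi\in C(X\times X)$; the neighbourhoods then come from a single open set $\{|\Phi|>\varepsilon\}$. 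This normalisation buys a cleaner endgame: once all terms carry the same $(P,Q)$, the hypothesis $\theta_q(x_3)=\theta_p(x_4)$ in (c) feeds directly into the formula for every summand, whereas in the paper's arrangement one must implicitly ensure that $\theta_{q_0}(x_3)=\theta_{p_0}(x_4)$ together with the local injectivity of $\theta_{p_i},\theta_{q_i}$ forces $\theta_{q_i}(x_3)=\theta_{p_i}(x_4)$ for each $i$. The cost of your approach is the extra algebraic step (the insertion of $\sum_i\nu_i^{(k)}s_{r_k}s_{r_k}^*\nu_i^{(k)}=1$ and the commutation via (I)), which you carry out correctly.
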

\begin{proof}
Recall that the linear span of $\{ fs_ps_q^*g \mid f,g \in C(X),~p,q \in P\}$ is dense in $\CO[X,P,\theta]$ according to Lemma~\ref{lem:O-alg SIDoFT - lin span}. As both sides of the equation in i) are linear and continuous in $a$, it suffices to prove the equation for $a = fs_ps_q^*g$. This is achieved by 
\[\begin{array}{lcl}
\left\langle \xi_{x,h}, \tilde{\varphi}(fs_ps_q^{*}g)\xi_{x,h} \right\rangle &=& \left\langle \tilde{\varphi}(s_p^{*}\overline{f})\xi_{x,h}, \tilde{\varphi}(s_q^{*}g)\xi_{x,h} \right\rangle \vspace{2mm}\\
&=& \delta_{p q}~N_p^{-1}~f(x)g(x) \vspace{2mm}\\
&=& G(fs_ps_q^{*}g)(x)
\end{array}\]

For ii), we note that $a \in C(X)$ certainly implies $\left\langle \tilde{\varphi}(a) \xi_{x_1,h_1}, \xi_{x_2,h_2} \right\rangle = 0$ for all $(x_1,h_1) \neq (x_2,h_2)$. On the other hand, if $a \in \CO[X,P,\theta]$ satisfies $\left\langle \tilde{\varphi}(a) \xi_{x_{1},h_{1}}, \xi_{x_{2},h_{2}} \right\rangle = 0$ whenever $(x_1,h_1) \neq (x_2,h_2)$, part i) implies $\tilde{\varphi}(a) = \tilde{\varphi}(G(a))$. As $\tilde{\varphi}$ is faithful, see Proposition~\ref{prop:augm rep is faithful}, this shows $a = G(a) \in C(X)$.

In order to prove iii), suppose we have $(x_1,h_1),(x_2,h_2) \in X \times H$ such that $\varepsilon := | \left\langle \tilde{\varphi}(a) \xi_{x_{1},h_{1}}, \xi_{x_{2},h_{2}} \right\rangle | > 0.$ According to Lemma~\ref{lem:O-alg SIDoFT - lin span}, we can choose $p_1,q_1,\dots,p_m,q_m \in P$ and $f_1,g_1,\dots,f_m,g_m \in C(X)$ such that 
\[\begin{array}{c} a_{m} :=  \sum\limits_{i = 1}^m{f_is_{p_i}s_{q_i}^{*}g_i} \text{ satisfies } \|a - a_m \| < \frac{\varepsilon}{3}. \end{array}\]
As \label{basic comp}
\[\begin{array}{c} \tilde{\varphi}\left(f_is_{p_i}s_{q_i}^{*}g_i\right) \xi_{x_1,h_1} = N_{p_iq_i}^{-\frac{1}{2}} \sum\limits_{y \in \theta_{p_i}^{-1}\left(\theta_{q_i}(x_1)\right)}{\hspace*{-2mm} f_i(y)g_i(x_1) \xi_{y,p_iq_i^{-1}h_1}}, \end{array}\]
we either get $\left\langle \tilde{\varphi}\left(f_is_{p_i}s_{q_i}^{*}g_i\right) \xi_{x_1,h_1}, \xi_{x_2,h_2} \right\rangle = 0$ or $x_2 \in \theta_{p_i}^{-1}\left(\theta_{q_i}(x_1)\right)$ and $p_iq_i^{-1}h_1 = h_2$. The latter conditions are equivalent to $\theta_{p_i}(x_2) = \theta_{q_i}(x_1)$ and $p_ih_1 = q_ih_2$ since $P$ is commutative. Note that there is at least one $i$ such that 
\[\left\langle \tilde{\varphi}\left(f_is_{p_i}s_{q_i}^{*}g_i\right) \xi_{x_1,h_1}, \xi_{x_2,h_2} \right\rangle \neq 0\]
because $\tilde{\varphi}$ is contractive and $\|a - a_{m} \| < \frac{\varepsilon}{3}$. Therefore, possibly changing the enumeration, we can assume that there is $1 \leq n \leq m$ such that 
\[\left\langle \tilde{\varphi}\left(f_is_{p_i}s_{q_i}^{*}g_i\right) \xi_{x_1,h_1}, \xi_{x_2,h_2} \right\rangle \neq 0 \text{ if and only if } 1 \leq i \leq n.\] 
Let $a_{n} := \sum_{i = 1}^{n}{f_{i,1}s_{p_{i,1}}s_{p_{i,2}}^{*}f_{i,2}}$. Since $P$ is lattice ordered there is a unique element $p_0 := p_1 \vee \dots \vee p_n$. Additionally, set $q_0 := h_2^{-1}ph_1 \in H$ and note that $q_0 \in P$ since $h_2^{-1}p_ih_1 = q_i \in P$ for all $1 \leq i \leq n$. For each $1 \leq i \leq n$, there are open neighbourhoods $U_{i,1}^{'}$ of $x_1$ and $U_{i,2}^{'}$ of $x_2$ such that 
\begin{enumerate}
\item[$\bullet$] $\theta_{p_i}$ is injective on $U_{i,1}^{'}$,
\item[$\bullet$] $\theta_{q_i}$ is injective on $U_{i,2}^{'}$, and 
\item[$\bullet$] $N_{p_iq_i}^{-\frac{1}{2}} ~ | f_i(y_1)g_i(y_2) - f_i(x_1)g_i(x_2) | < \frac{\varepsilon}{3n} \text{ for all } y_1 \in U_{i,1}^{'},y_2 \in U_{i,2}^{'}.$  
\end{enumerate}

\noindent This is always possible because the transformations $\theta_{p_i},\theta_{q_i}$ are local homeomorphisms and the function $X^2 \longrightarrow \IC$ given by $(y_{1},y_{2}) \mapsto f_{i,1}(y_{1})f_{i,2}(y_{2})$ is continuous. Then 
\[U_{i,1} := \theta_{p_i}^{-1}\left(\theta_{p_i}(U_{i,1}^{'}) \cap \theta_{q_i}(U_{i,2}^{'})\right)\]
defines an open neighbourhood of $x_1$ such that for each $y_1 \in U_{i,1}$ there is a unique $y_2 \in U_{i,2}$ with $\theta_{p_i}(y_2) = \theta_{q_i}(y_1)$. Accordingly, set 
\[U_{i,2} := \theta_{q_i}^{-1}\left(\theta_{p_i}(U_{i,1}^{'}) \cap \theta_{q_i}(U_{i,2}^{'})\right).\]
and take $U_{j} := \bigcap_{i = 1}^{n}{U_{i,j}}$ for $j = 1,2$. Now suppose $x_{3} \in U_{1},~ x_{4} \in U_{2}$ satisfy $\theta_q(x_{3}) = \theta_p(x_{4})$. Using the triangle inequality for the first two steps, we get
\[\begin{array}{lclcl}
| \left\langle \tilde{\varphi}(a) \xi_{x_{3},h_1}, \xi_{x_{4},h_{2}} \right\rangle | 
&\hspace{-2mm}\geq\hspace{-2mm}& \varepsilon &\hspace{-2mm}-\hspace{-2mm}&\hspace{-2mm} | \left\langle \tilde{\varphi}(a) \xi_{x_{1},h_1}, \xi_{x_{2},h_{2}} \right\rangle - \left\langle \tilde{\varphi}(a) \xi_{x_{3},h_1}, \xi_{x_{4},h_{2}} \right\rangle | \vspace{2mm}\\
&\hspace{-2mm}\geq\hspace{-2mm}& \varepsilon &\hspace{-2mm}-\hspace{-2mm}&\hspace{-2mm} | \left\langle \tilde{\varphi}(a) \xi_{x_{1},h_1}, \xi_{x_{2},h_{2}} \right\rangle - \left\langle \tilde{\varphi}(a_{m}) \xi_{x_{1},h_1}, \xi_{x_{2},h_{2}} \right\rangle | \vspace{2mm}\\
&\hspace{-4mm}&&\hspace{-2mm}-\hspace{-2mm}&\hspace{-2mm} | \left\langle \tilde{\varphi}(a_{m}) \xi_{x_{1},h_1}, \xi_{x_{2},h_{2}} \right\rangle - \left\langle \tilde{\varphi}(a_{m}) \xi_{x_{3},h_1}, \xi_{x_{4},h_{2}} \right\rangle | \vspace{2mm}\\
&\hspace{-4mm}&&\hspace{-2mm}-\hspace{-2mm}&\hspace{-2mm} | \left\langle \tilde{\varphi}(a_{m}) \xi_{x_{3},h_1}, \xi_{x_{4},h_{2}} \right\rangle - \left\langle \tilde{\varphi}(a) \xi_{x_{3},h_1}, \xi_{x_{4},h_{2}} \right\rangle | \vspace{2mm}\\	
&\hspace{-2mm}=\hspace{-2mm}& \varepsilon &\hspace{-2mm}-\hspace{-2mm}&\hspace{-2mm} | \left\langle \tilde{\varphi}(a-a_{m}) \xi_{x_{1},h_1}, \xi_{x_{2},h_{2}} \right\rangle | \vspace{2mm}\\
&\hspace{-4mm}&&\hspace{-2mm}-\hspace{-2mm}&\hspace{-2mm} | \left\langle \tilde{\varphi}(a_{n}) \xi_{x_{1},h_1}, \xi_{x_{2},h_{2}} \right\rangle - \left\langle \tilde{\varphi}(a_{n}) \xi_{x_{3},h_1}, \xi_{x_{4},h_{2}} \right\rangle | \vspace{2mm}\\
&\hspace{-4mm}&&\hspace{-2mm}-\hspace{-2mm}&\hspace{-2mm} | \left\langle \tilde{\varphi}(a_{m}-a) \xi_{x_{3},h_1}, \xi_{x_{4},h_{2}} \right\rangle | \vspace{2mm}\\									
&\hspace{-2mm}>\hspace{-2mm}& \varepsilon &\hspace{-2mm}-\hspace{-2mm}&\hspace{-2mm} \frac{\varepsilon}{3} - n\frac{\varepsilon}{3n} - \frac{\varepsilon}{3} = 0.
\end{array}\]	
\end{proof}

\noindent This marks the end of the first half of the preparations for Theorem~\ref{thm:top free char}. The second part will show that topological freeness of $(X,P,\theta)$ results in the ideal intersection property for $C(X)$ inside $\CO[X,P,\theta]$, see Proposition~\ref{prop:top free gives IIP}. 

\begin{lemma}\label{lem:aux lemma top free gives IIP}
Let $p,q \in P$. If $x \in X$ satisfies $\theta_p(x) \neq \theta_q(x)$, then there exists a positive contraction $h \in C(X)$ such that $h(x) = 1$ and $hs_ps_q^{*}h = 0$.
\end{lemma}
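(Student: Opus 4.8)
The plan is to choose $h$ supported in a small neighbourhood of $x$ on which $\theta_p$ and $\theta_q$ have disjoint ranges, and then check the identity $h s_p s_q^* h = 0$ by pushing it through the faithful representation $\tilde\varphi$ from Proposition~\ref{prop:augm rep is faithful}.

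First I would use that $X$ is Hausdorff: since $\theta_p(x)\neq\theta_q(x)$, pick disjoint open sets $V\ni\theta_p(x)$ and $W\ni\theta_q(x)$ and put $U:=\theta_p^{-1}(V)\cap\theta_q^{-1}(W)$, an open neighbourhood of $x$ with $\theta_p(U)\subset V$ and $\theta_q(U)\subset W$, so that $\theta_p(U)\cap\theta_q(U)=\emptyset$. As compact Hausdorff spaces are normal, Urysohn's lemma supplies a positive contraction $h\in C(X)$ with $h(x)=1$ and $h\equiv 0$ on $X\setminus U$; in particular $\{y\in X\mid h(y)\neq 0\}\subset U$.

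Next I would evaluate $\tilde\varphi(h s_p s_q^* h)$ on the orthonormal basis of $\ell^2(X\times H)$. Using $\tilde S_q^*\xi_{x',h'}=N_q^{-\frac12}\xi_{\theta_q(x'),q^{-1}h'}$ together with the formula for $\tilde S_p$, exactly as in the computation in the proof of Lemma~\ref{lem:tech lemma top free} (with $f_i=g_i=h$, $p_i=p$, $q_i=q$), one obtains
\[
\tilde\varphi(h s_p s_q^* h)\,\xi_{x',h'} \;=\; N_{pq}^{-\frac12}\,h(x')\!\!\sum_{y\in\theta_p^{-1}(\theta_q(x'))}\!\! h(y)\,\xi_{y,\,pq^{-1}h'}.
\]
If this vector were nonzero, then $h(x')\neq 0$, hence $x'\in U$ and $\theta_q(x')\in\theta_q(U)$, and there would be some $y$ with $\theta_p(y)=\theta_q(x')$ and $h(y)\neq 0$, hence $y\in U$ and $\theta_p(y)\in\theta_p(U)$; this would give $\theta_q(x')=\theta_p(y)\in\theta_p(U)\cap\theta_q(U)=\emptyset$, a contradiction. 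Thus $\tilde\varphi(h s_p s_q^* h)$ annihilates every basis vector and is therefore $0$, and faithfulness of $\tilde\varphi$ (Proposition~\ref{prop:augm rep is faithful}) yields $h s_p s_q^* h=0$, while $h(x)=1$ by construction.

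I do not expect a genuine obstacle here; the only points needing a little care are the bookkeeping of the $H$-component under $\tilde\varphi$ and the two support inclusions $x',y\in U$. A purely algebraic derivation, trying to move $h$ across $s_p$ and $s_q^*$ via relation (I), runs into trouble because without relative primeness of $p$ and $q$ one cannot commute $s_q^*$ past elements of the form $\alpha_p(g)$; hence routing the argument through the faithful representation $\tilde\varphi$ is the cleaner approach.
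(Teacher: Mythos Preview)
Your argument is correct. The construction of $U$ with $\theta_p(U)\cap\theta_q(U)=\emptyset$ and the choice of $h$ match the paper, and your computation of $\tilde\varphi(hs_ps_q^*h)\xi_{x',h'}$ is accurate; since the $\xi_{y,pq^{-1}h'}$ for distinct $y$ are orthogonal, a nonzero output indeed forces $h(x')\neq 0$ and $h(y)\neq 0$ for some $y\in\theta_p^{-1}(\theta_q(x'))$, yielding the contradiction $\theta_q(x')\in\theta_p(U)\cap\theta_q(U)$.

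The route, however, differs from the paper's. The paper stays inside the C*-algebra: it computes
\[
\|hs_ps_q^*h\|^2 = \|hs_qs_p^*h^2s_ps_q^*h\| \stackrel{(II)}{=} \|hs_qL_p(h^2)s_q^*h\| \stackrel{(I)}{=} \|h\,\alpha_q(L_p(h^2))\,s_qs_q^*h\|,
\]
and then shows $h\,\alpha_q(L_p(h^2))=0$ via the support estimate $\supp L_p(h^2)\subset\theta_p(\supp h)\subset\theta_p(U)$ together with $U\cap\theta_q^{-1}(\theta_p(U))=\emptyset$. So your closing remark that a purely algebraic derivation ``runs into trouble'' is not quite right: one does not need to push $h$ past $s_q^*$, one only needs the norm-squared trick to bring $s_p^*$ and $s_p$ together so that relation $(II)$ applies. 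The paper's approach has the advantage of being entirely internal---it does not invoke the faithful representation $\tilde\varphi$, whose faithfulness rests on the gauge-invariant uniqueness theorem from \cite{CLSV}. Your approach, on the other hand, is pleasantly concrete and makes the geometric obstruction $\theta_p(U)\cap\theta_q(U)=\emptyset$ visibly responsible for the vanishing.
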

\begin{proof} 
The steps leading to a proof are:
\begin{enumerate}[a)]
\item There is an open neighbourhood $U$ of $x$ satisfying $U \cap \theta_q^{-1}(\theta_p(U)) = \emptyset$.
\item $\supp L_{r}(f) \subset \theta_r(\supp f)$ holds for all $f \in C(X)$ and $r \in P$.
\item There exists a positive contraction $h \in C(X)$ with $h(x) =1$ and $\supp h \subset U$ for the $U$ obtained in a). Every $h$ of this form satisfies $h\alpha_q(L_p(h^2)) = 0$.
\end{enumerate}
As $X$ is Hausdorff, there are disjoint, open neighbourhoods $V$ and $W$ of $\theta_p(x)$ and $\theta_q(x)$, respectively. Hence $U := \theta_p^{-1}(V) \cap \theta_q^{-1}(W)$ is an open neighbourhood of $x$ and $\theta_p(U) \cap \theta_q(U) \subset V \cap W = \emptyset$, so 
\[U \cap \theta_q^{-1}(\theta_p(U)) \subset \theta_q^{-1}(\theta_q(U)) \cap \theta_q^{-1}(\theta_p(U)) = \theta_q^{-1}(\theta_p(U) \cap \theta_q(U)) = \emptyset\]
establishes a). The proof of claim b) is straightforward. For the first part of c), we note that such an $h$ exists because $U$ is an open neighbourhood of $x$ and $X$ is a normal space. Therefore we get 
\[\supp h\alpha_q(L_p(h^2)) \subset U \cap \theta_q^{-1}(\supp L_p(h^2)) 
\stackrel{b)}{\subset} U \cap \theta_q^{-1}(\theta_p(\underbrace{\supp h^2}_{\subset U})) \stackrel{a)}{=} \emptyset\]
which proves $h\alpha_q(L_p(h^2)) = 0$. Combining these ingredients, we deduce
\[\|hs_ps_q^{*}h\|^{2} = \|hs_qs_p^{*}h^{2}s_ps_q^{*}h\| 
= \|h~\alpha_q(L_p(h^2))s_qs_q^{*}h\| \stackrel{c)}{=} 0.\]
\end{proof}

\begin{remark}
Observe that we can deduce from the proof of Lemma~\ref{lem:aux lemma top free gives IIP} that condition ii) is equivalent to $h\alpha_{p_1}(L_{p_2}(h)) = 0$ as well as to $h\alpha_{p_2}(L_{p_1}(h)) = 0$. 
\end{remark}

\noindent Before we reach the central result of this section, let us recall the notion of topological freeness for dynamical systems, where the transformations need not be reversible.

\begin{definition}\label{def:top freeness}
A topological dynamical system consisting of a topological space $Y$ and a semigroup $S$ together with an action $S \stackrel{\eta}{\curvearrowright}Y$ by continuous transformations is said to be \textit{topologically free} if the set $\{y \in Y \mid \eta_s(y) = \eta_t(y) \}$ has empty interior for all $s,t \in S,~s \neq t$.
\end{definition}

\begin{proposition}\label{prop:top free gives IIP}
If $(X,P,\theta)$ is topologically free, every non-zero ideal $I$ in $\CO[X,P,\theta]$ satisfies $I \cap C(X) \neq 0$. 
\end{proposition}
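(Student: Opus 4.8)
The plan is to take a nonzero positive element $a$ of $I$, compress it by a function $h\in C(X)$ peaked at a carefully chosen point, and use topological freeness to arrange that the compression $hah$ lies within an arbitrarily small distance of an honest element of $C(X)$ whose sup-norm is bounded below by a fixed positive constant; a routine quotient argument then forces $I\cap C(X)\neq 0$. First the reduction: replacing $a$ by $a^{*}a$ we may assume $0\neq a\in\CO[X,P,\theta]_{+}$, and by Lemma~\ref{lem:weak faithfulness} there is $b\in\CF$ with $G(bab^{*})\neq 0$; since $bab^{*}\in I_{+}$, after replacing $a$ by $bab^{*}$ and rescaling we may assume $\|G(a)\|=1$ and fix $x_{0}\in X$ with $G(a)(x_{0})=1$. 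Fix $\varepsilon\in(0,\tfrac{1}{3})$ and use Lemma~\ref{lem:O-alg SIDoFT - lin span} to choose $a_{m}=\sum_{i=1}^{m}f_{i}s_{p_{i}}s_{q_{i}}^{*}g_{i}$ with $\|a-a_{m}\|<\varepsilon$. Split $a_{m}=a_{m}^{\mathrm{d}}+a_{m}^{\mathrm{o}}$ into its diagonal part (the terms with $p_{i}=q_{i}$) and its off-diagonal part (the terms with $p_{i}\neq q_{i}$), and set $p^{*}:=\bigvee_{p_{i}=q_{i}}p_{i}$. By Proposition~\ref{prop:F as ind limit} every diagonal term lies in $\CF_{p^{*}}$, so Proposition~\ref{prop:pos elts in Fp} lets us rewrite $a_{m}^{\mathrm{d}}=\sum_{k}\widetilde{f}_{k}\,s_{p^{*}}s_{p^{*}}^{*}\,\widetilde{g}_{k}$; by Corollary~\ref{cor:cond exp to C(X)} this gives $G(a_{m})=G(a_{m}^{\mathrm{d}})=N_{p^{*}}^{-1}\sum_{k}\widetilde{f}_{k}\widetilde{g}_{k}$, and since $G$ is contractive we have $\|G(a)-G(a_{m})\|<\varepsilon$, so that $|G(a_{m})|>1-2\varepsilon$ on the nonempty open set $W:=\{x\in X\mid G(a)(x)>1-\varepsilon\}$, which contains $x_{0}$.

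Now topological freeness enters. For each off-diagonal index $i$ the set $\{x\mid\theta_{p_{i}}(x)=\theta_{q_{i}}(x)\}$ is closed with empty interior by Definition~\ref{def:top freeness}; their finite union is therefore nowhere dense, so $W$ contains a point $x_{1}$ with $\theta_{p_{i}}(x_{1})\neq\theta_{q_{i}}(x_{1})$ for every off-diagonal $i$. Inspecting the proof of Lemma~\ref{lem:aux lemma top free gives IIP}, each off-diagonal index yields an open neighbourhood $U_{i}$ of $x_{1}$ with the property that $hs_{p_{i}}s_{q_{i}}^{*}h=0$ for every positive contraction $h\in C(X)$ supported in $U_{i}$. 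Choose an open neighbourhood $V\ni x_{1}$ with $V\subseteq W\cap\bigcap_{i}U_{i}$ on which $\theta_{p^{*}}$ is injective (possible since $\theta_{p^{*}}$ is a local homeomorphism), a positive contraction $h\in C(X)$ with $h(x_{1})=1$ and $\supp h\subseteq V$, and, using Lemma~\ref{lem:loc homeo right reconstruction}, a partition of unity for $\theta_{p^{*}}$ whose first member is identically $1$ on $\supp h$ (shrinking $\supp h$ if necessary). Multiplying the identity $\sum_{j}\nu_{j}s_{p^{*}}s_{p^{*}}^{*}\nu_{j}=1$ of Lemma~\ref{lem:CNP equiv} on both sides by $h$ then produces the key collapse $hs_{p^{*}}s_{p^{*}}^{*}h=N_{p^{*}}^{-1}h^{2}$, while $hs_{p_{i}}s_{q_{i}}^{*}h=0$ for all off-diagonal $i$. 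Hence
\[ ha_{m}h \;=\; ha_{m}^{\mathrm{d}}h \;=\; \sum_{k}\widetilde{f}_{k}\,(hs_{p^{*}}s_{p^{*}}^{*}h)\,\widetilde{g}_{k} \;=\; h^{2}G(a_{m}) \;\in\; C(X), \]
so the element $c:=hah$ of $I$ satisfies $\|c-h^{2}G(a_{m})\|\leq\|a-a_{m}\|<\varepsilon$.

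To finish, observe that $\|h^{2}G(a_{m})\|\geq|h(x_{1})^{2}G(a_{m})(x_{1})|=|G(a_{m})(x_{1})|>1-2\varepsilon$ because $x_{1}\in W$. If $I\cap C(X)=0$, then the quotient map $\pi\colon\CO[X,P,\theta]\to\CO[X,P,\theta]/I$ is isometric on $C(X)$ and kills $c$, so
\[ 1-2\varepsilon \;<\; \|h^{2}G(a_{m})\| \;=\; \|\pi(h^{2}G(a_{m}))\| \;=\; \|\pi(h^{2}G(a_{m})-c)\| \;\leq\; \|h^{2}G(a_{m})-c\| \;<\; \varepsilon, \]
which is absurd as $\varepsilon<\tfrac{1}{3}$. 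Therefore $I\cap C(X)\neq 0$.

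I expect the main obstacle to be establishing the collapse $hs_{p^{*}}s_{p^{*}}^{*}h=N_{p^{*}}^{-1}h^{2}$ in coordination with the choice of $x_{1}$, $V$ and $h$: one has to choose $h$ supported in a single region on which $\theta_{p^{*}}$ is injective, \emph{and} inside the separating neighbourhoods $U_{i}$ coming from Lemma~\ref{lem:aux lemma top free gives IIP}, \emph{and} inside the set where $G(a_{m})$ remains large, and then feed a compatible partition of unity into relation (IV) by way of Lemma~\ref{lem:CNP equiv}. This collapse is precisely the point where regularity of the maps $\theta_{p}$ — through the reconstruction formula of Lemma~\ref{lem:loc homeo right reconstruction} — is indispensable; with it in hand, everything else is elementary bookkeeping.
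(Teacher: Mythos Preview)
Your proof is correct and follows the same overall architecture as the paper's (approximate, use topological freeness to find a good point, compress by a bump function $h$ to kill the off-diagonal part, then run a quotient argument), but you execute the decisive step differently.

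In the paper, after the compression one only obtains $hah = hE_1(a)h \in \CF_{p}$; to pass from $\CF_{p}$ down to $C(X)$ the paper invokes the unital completely positive map $b \mapsto \sum_k v_k^{1/2}\,b\,v_k^{1/2}$, observes that it sends $E_1(a)$ to $G(a)$, and uses its contractivity together with the identity $G(a) = \sum_k v_k^{1/2} E_1(a) v_k^{1/2}$ to deduce $\|\pi(a)\| \geq \|hG(a)h\|$. You bypass this by imposing the extra constraint that $\theta_{p^{*}}$ be injective on $\supp h$, which yields the collapse $h s_{p^{*}} s_{p^{*}}^{*} h = N_{p^{*}}^{-1} h^{2}$ directly from relation~(IV); consequently $ha_m h$ already lies in $C(X)$ and no UCP-map argument is needed. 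This is a genuine simplification, and your identification of this as ``the main obstacle'' is well placed. A second, minor organisational difference: the paper proves the blanket inequality $\|\pi(a)\| \geq \|G(a)\|$ for all positive $a$ and only then appeals to Lemma~\ref{lem:weak faithfulness}, whereas you invoke that lemma up front to normalise $\|G(a)\| = 1$. Both orderings work; yours is slightly more economical for the stated goal, while the paper's gives a reusable norm estimate.
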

\begin{proof}
We will follow the strategy from \cite{EV}*{Theorem 10.3}. Suppose $I$ is an ideal in $\CO[X,P,\theta]$ satisfying $I \cap C(X) = 0$ and denote by $\pi$ the corresponding quotient map. Then $\pi$ is isometric on $C(X)$. We claim that $\|\pi(a)\| \geq \|G(a)\| \text{ holds for all positive } a \in \CO[X,P,\theta]$. By continuity (of the norms, of $\pi$, and of $G$), it suffices to prove the above equation for 
\[\begin{array}{c} a = \sum\limits_{j = 1}^{n}{f_js_{p_j}s_{q_j}g_j}, \text{ with } n \in \N, f_j,g_j \in C(X) \text{ and } p_j,q_j \in P. \end{array}\]
Without loss of generality, we can assume that there is $1 \leq n_0 \leq n$ such that $p_j = q_j$ holds if and only if $j \leq n_0$. In fact, possibly inflating the elements $f_js_{p_j}s_{q_j}^{*}g_j$ by $1 = \sum\limits_{i}{\nu_is_ps_p^{*}\nu_i}$ for $p \geq p_1 \vee \dots \vee p_{n_0}$, see Lemma~\ref{lem:CNP equiv}, we can assume that $p_j = q_j = p$ holds for all $1 \leq j \leq n_{0}$. 

If $n_{0} = n$, then we have $a \in (\CF_{p})_{+}$. In this case, set $h = 1$. In case $n_{0} < n$, $\bigcap_{j = n_{0}+1}^{n}{\{x \in X \mid \theta_{p_j}(x) \neq \theta_{q_j}(x)\}}$ is dense in $X$ since $(X,P,\theta)$ is topologically free. Thus, for each $\varepsilon \in (0,1)$, there exists $x \in X$ satisfying
\begin{enumerate}[a)]
\item $G(a)(x) > (1-\varepsilon)~\|G(a)\|$, and 
\item $\theta_{p_j}(x) \neq \theta_{q_j}(x)$ for all $n_{0} < j \leq n$.
\end{enumerate} 
Applying Lemma~\ref{lem:aux lemma top free gives IIP} to each $n_{0} < j \leq n$ yields functions $h_{n_0+1},\dots,h_n \in C(X)$, which we use to build $h := \prod_{j = n_{0}+1}^{n}{h_{j}}$. Then $h$ satisfies
\begin{enumerate}[(a)]
\item $0 \leq h \leq 1$, 
\item $h(x) = 1$, and 
\item $hs_{p_j}s_{q_j}^{*}h = 0$ for all $n_{0} < j \leq n$.
\end{enumerate}
This results in 
\[\begin{array}{c} hah = \sum\limits_{j = 1}^{n}{f_jhs_{p_j}s_{q_j}^{*}hg_j} \stackrel{(c)}{=} \sum\limits_{j = 1}^{n_{0}}{f_jhs_{p}s_{p}^{*}hg_j} = hE_1(a)h, \end{array}\]
where $E_1:\CO[X,P,\theta] \longrightarrow \CF$ is the faithful conditional expectation from Lemma~\ref{lem:cond exp to F}. Note that we have $E_1(a) = \sum_{j=1}^{n_0} f_js_ps_p^*g_j \in (\CF_{p})_{+}$. Next, choose a partition of unity $(v_k)_{1 \leq k \leq m}$ for $X$ and $\theta_p$ as in Lemma~\ref{lem:loc homeo right reconstruction} and, as before, let $\nu_k := (N_pv_k)^{\frac{1}{2}}$. Then we obtain
\[\begin{array}{lclcl}
G(a) &=& G(E_{1}(a)) &=& N_{p}^{-1}~\sum\limits_{j = 1}^{n_{0}}{f_jg_j} \vspace*{2mm}\\
&&&=& N_{p}^{-1}~\sum\limits_{j = 1}^{n_{0}}{f_j\left(\sum\limits_{k = 1}^{m}{\nu_{k}s_{p}s_{p}^{*}\nu_{k}}\right)g_j}\vspace*{2mm}\\ 
&&&=& \sum\limits_{k = 1}^{m}{v_{k}^{\frac{1}{2}}E_{1}(a)v_{k}^{\frac{1}{2}}}.
\end{array}\]
Combining this with the fact that $\pi(a) \mapsto \sum\limits_{k = 1}^{m}{\pi(v_{k}^{\frac{1}{2}})~\pi(a)~\pi(v_{k}^{\frac{1}{2}})}$ is a unital completely positive map, hence contractive, we get
\[\begin{array}{lcl}
\|\pi(a)\| &\geq& \| \pi\left(\sum\limits_{k = 1}^{m}{v_{k}^{\frac{1}{2}}av_{k}^{\frac{1}{2}}}\right)\| \vspace*{2mm}\\
&\geq& \| \pi\left(\sum\limits_{k = 1}^{m}{v_{k}^{\frac{1}{2}}hahv_{k}^{\frac{1}{2}}}\right)\| \vspace*{2mm}\\
&=& \| \pi\left(h\sum\limits_{k = 1}^{m}{v_{k}^{\frac{1}{2}}E_{1}(a)v_{k}^{\frac{1}{2}}}h\right)\| \vspace*{2mm}\\
&=& \|\pi(hG(a)h)\|\\
&=& \|hG(a)h\|
\end{array}\]
since $\pi$ is isometric on $C(X)$. On the other hand, 
\[\|hG(a)h\| \geq (hG(a)h)(x) = G(a)(x) > (1-\varepsilon)\|G(a)\|,\]
so $\|\pi(a)\| > (1-\varepsilon)\|G(a)\|$ for all $\varepsilon > 0$. This forces $\|\pi(a)\| \geq \|G(a)\|$. 

So given $a \in \CO[X,P,\theta]_{+} \cap I$, we have $0 = \|\pi(bab^{*})\| \geq \|G(bab^{*})\|$ for all $b \in \CO[X,P,\theta]$. In particular, $G(bab^{*}) = 0$ holds for all $b \in \CF$. But according to Lemma~\ref{lem:weak faithfulness}, this implies $a = 0$ and hence $I = 0$.    
\end{proof}

\noindent We now state for the main result of this section:
\TopFreeChar
\begin{proof}
The plan is as follows:
\[\xymatrix{\text{(1)}\ar@{=>}[r]\ar@{<=>}[d] & \text{(2)}\ar@{=>}[d] \\
\text{(4)} & \text{(3)}\ar@{=>}[ul]	}\]
The implication from (1) to (2) is precisely covered by Proposition~\ref{prop:top free gives IIP} and (2) gives (3) because we have $\ker\varphi \cap C(X) = 0$, see Proposition~\ref{prop:elementary rep SIDoFT}. Next, we show that (3) or (4) implies (1), where we proceed by contraposition. If the system is not topologically free, there are $p,q \in P$ with $p \neq q$ such that $\left\{x \in X \mid \theta_p(x) = \theta_q(x)\right\}$ has non-empty interior. Since the maps $\theta_p$ and $\theta_q$ are local homeomorphisms, there exists a non-empty open $U \subset \left\{x \in X \mid \theta_p(x) = \theta_q(x)\right\}$ such that $\theta_p|_{U} = \theta_q|_{U}$ is injective. We fix $x_{0} \in U$ and choose a positive $f \in C(X)$ satisfying $f(x_{0}) \neq 0$ and $\supp f \subset U$. By appealing to the existence of partitions of unity for open covers of compact Hausdorff spaces, we know that such a function $f$ always exists. Let us point out that $fs_ps_q^{*}f$ does not belong to $C(X)$, which can formally be deduced from Lemma~\ref{lem:tech lemma top free}~ii), $p \neq q$, and 
\[\left\langle \tilde{\varphi}(fs_ps_q^{*}f) \xi_{x_{0},q}, \xi_{x_{0},p} \right\rangle = N_{pq}^{-\frac{1}{2}} f(x_{0})^{2} \neq 0.\] 
Then
\[\begin{array}{lcl}
\left\langle \varphi(fs_ps_q^{*}f) \xi_x, \xi_y \right\rangle &=& \left\langle \varphi(s_q^{*}f) \xi_x, \varphi(s_p^{*}f)\xi_y \right\rangle \vspace*{2mm}\\
&=& \delta_{\theta_q(x) \theta_p(y)} ~ N_{pq}^{-\frac{1}{2}} ~ f(x)f(y)\vspace*{2mm}\\
&=& \delta_{x y} ~ N_{pq}^{-\frac{1}{2}} ~ f(x)^{2}
\end{array}\]
holds for all $x,y \in U$, where we used injectivity of $\theta_p|_{U} = \theta_q|_{U}$. Note that the expression vanishes whenever $x$ or $y$ is not contained in $U$ due to $\supp f \subset U$. Hence we get $0 \neq fs_ps_q^{*}f - N_{pq}^{-\frac{1}{2}} f^{2} \in \ker\varphi$, which shows that (3) implies (1).

In order to prove that (4) forces (1), it suffices to show that the function $f$ from the last part satisfies $fs_ps_q^{*}f \in C(X)^{'} \cap \CO[X,P,\theta]$. Let us pick $(\nu_{i})_{i \in I}$ for $\theta_q$ as in Lemma~\ref{lem:loc homeo right reconstruction}. We claim that
\[\alpha_p(L_q(fg\nu_{i})) = N_q^{-1} gf\nu_{i} = g\alpha_p(L_q(f\nu_{i}))\]
holds for all $g \in C(X)$ and $i \in I$. Using the property that $\theta_q|_{\supp \nu_i}$ is injective, it is easy to see that the functions match on $X \setminus \supp f$, so let $x \in \supp f \subset U.$ Then
\[\begin{array}{lcl}
\alpha_p(L_q(fg\nu_{i}))(x) &=& N_q^{-1} \sum\limits_{y \in \theta_q^{-1} (\theta_p(x))} g(y)f(y)\nu_{i}(y) \vspace{2mm}\\
&=& N_q^{-1} \sum\limits_{y \in \theta_q^{-1}(\theta_q(x))} g(y)f(y)\nu_{i}(y) \vspace{2mm}\\
&=& N_q^{-1} g(x)f(x)\nu_{i}(x)
\end{array}\]
holds, where we used $\theta_p|_{U} = \theta_q|_{U}$ and injectivity of $\theta_q|_{U}$. Similarly we get 
\[\begin{array}{lcl}
g\alpha_p(L_q(f\nu_{i}))(x) &=& g(x) N_q^{-1} \sum\limits_{y \in \theta_q^{-1}(\theta_p(x))} f(y)\nu_{i}(y) \vspace{2mm}\\
&=& N_q^{-1} g(x)f(x)\nu_{i}(x).
\end{array}\]
Thus $\alpha_p(L_q(fg\nu_{i})) = N_q^{-1} gf\nu_{i} = g\alpha_p(L_q(f\nu_{i}))$
is valid for all $g \in C(X)$ and $i \in I$. Using this equation, we deduce
\[\begin{array}{lcl}
fs_ps_q^{*}fg &=& fs_ps_q^{*}fg \sum\limits_{i \in I}{\nu_{i}s_qs_q^{*}\nu_{i}} \vspace{2mm}\\
&=& \sum\limits_{i \in I} f\alpha_p(L_q(fg\nu_{i})) s_ps_q^*\nu_i \vspace{2mm}\\
&=& gfs_p \sum\limits_{i \in I} L_q(f\nu_i) s_q^*\nu_i \vspace{2mm}\\
&=& gfs_ps_q^* \sum\limits_{i \in I} \nu_i E_q(\nu_i f) \vspace{2mm}\\
&=& gfs_ps_q^*f.
\end{array}\]
for arbitrary $g \in C(X)$. Thus, $fs_ps_q^*f \in \left(C(X)^{'} \cap \CO[X,P,\theta]\right) \setminus C(X)$, so $C(X)$ is not a masa in $\CO[X,P,\theta]$.

To deduce (4) from (1), let $a \in C(X)^{'} \cap \CO[X,P,\theta]$. By Lemma~\ref{lem:tech lemma top free}~ii), $a \in C(X)$ follows provided that $\left\langle \tilde{\varphi}(a) \xi_{x_{1},h_{1}}, \xi_{x_{2},h_{2}} \right\rangle = 0$ holds for all $(x_{1},h_{1}) \neq (x_{2},h_{2})$. In case $x_{1} \neq x_{2}$, there is $f \in C(X)$ satisfying $f(x_{1}) \neq 0$ and $f(x_{2}) = 0$. Thus
\[\begin{array}{lcl}
f(x_{1})\left\langle \tilde{\varphi}(a) \xi_{x_{1},h_{1}}, \xi_{x_{1},h_{2}} \right\rangle &=& \left\langle \tilde{\varphi}(af) \xi_{x_{1},h_{1}}, \xi_{x_{2},h_{2}} \right\rangle \vspace{2mm}\\
&=& \left\langle \tilde{\varphi}(fa) \xi_{x_{1},h_{1}}, \xi_{x_{2},h_{2}} \right\rangle \vspace{2mm}\\
&=& f(x_{2})\left\langle \tilde{\varphi}(a) \xi_{x_{1},h_{1}}, \xi_{x_{2},h_{2}} \right\rangle \vspace{2mm}\\
&=& 0
\end{array}\]
implies that $\left\langle \tilde{\varphi}(a) \xi_{x_{1},h_{1}}, \xi_{x_{1},h_{2}} \right\rangle = 0$. Now let $x_{1} = x_{2}$ and $h_{1} \neq h_{2}$ and we assume $\left\langle \tilde{\varphi}(a) \xi_{x_{1},h_{1}}, \xi_{x_{1},h_{2}} \right\rangle \neq 0$ in order to derive a contradiction: Part iii) from Lemma~\ref{lem:tech lemma top free} states that there are $p,q \in P$ and open neighbourhoods $U_{1},U_{2}$ of $x_{1} = x_{2}$ with the properties (a)-(c). Note that $p \neq q$ due to (a) and $h_{1} \neq h_{2}$. By passing to smaller neighbourhoods of $x_{1}$, if necessary, we may assume that for each $x_{3} \in U_{1}$ there is a unique $x_{4} \in U_{2}$ satisfying $\theta_q(x_{3}) = \theta_p(x_{4})$ (and vice versa). In other words, the (a priori multivalued) maps $\theta_q^{-1}\theta_p: U_1 \longrightarrow U_2$ and $\theta_p^{-1}\theta_q: U_2 \longrightarrow U_1$ are homeomorphisms. This uses the standing assumption that $\theta_p$ and $\theta_q$ are local homeomorphisms. As $(X,P,\theta)$ is topologically free, the set $\left\{x \in U_{1} \mid \theta_p(x) = \theta_q(x)\right\}$ has empty interior, so it cannot be all of $U_{1}$. Hence there are $x_{3} \in U_{1}$ and $x_{4} \in U_{2}$ such that $x_{3} \neq x_{4}$ and $\theta_q(x_{3}) = \theta_p(x_{4})$. Now Lemma~\ref{lem:tech lemma top free}~iii) implies $\left\langle \tilde{\varphi}(a) \xi_{x_{3},h_{1}}, \xi_{x_{4},h_{2}} \right\rangle \neq 0$. On the other hand, we observe that $\left\langle \tilde{\varphi}(a) \xi_{x_{3},h_{1}}, \xi_{x_{4},h_{2}} \right\rangle = 0$ follows from the consideration of the case $x_1 \neq x_2$ from before because $x_3 \neq x_4$. This reveals a contradiction and thus, $\left\langle \tilde{\varphi}(a) \xi_{x_{1},h_{1}}, \xi_{x_{2},h_{2}} \right\rangle = 0$ whenever $(x_{1},h_{1}) \neq (x_{2},h_{2})$. According to Lemma~\ref{lem:tech lemma top free}~ii), this forces $a \in C(X)$, so $C(X)$ is a masa in $\CO[X,P,\theta]$.
\end{proof}

\begin{remark}
The representation $\varphi$ is an analogue of the reduced representation for ordinary group crossed products, for if $\theta_{p}$ was a homeomorphism of $X$, then $S_{p}\xi_{x} = \xi_{\theta_{p}^{-1}(x)}$, see Proposition~\ref{prop:elementary rep SIDoFT}. Therefore condition (3) of Theorem~\ref{thm:top free char} can be interpreted as an amenability property of the dynamical system $(X,P,\theta)$, compare \cite{BO}*{Theorem 4.3.4}. Interestingly, this property coincides with topological freeness for irreversible $*$-commutative dynamical systems of finite type.
\end{remark}

\section{\texorpdfstring{Simplicity of $\CO[X,P,\theta]$}{Simplicity of the C*-algebra}}
\label{sec5}
\noindent Let $X$ be a compact Hausdorff space, $G$ a discrete group, and $\alpha$ an action of $G$ on $C(X)$. Then simplicity of the crossed product $C(X) \rtimes_{\alpha} G$ corresponds to minimality and topological freeness of the underlying topological dynamical system, given that the action $\alpha$ is amenable, see \cite{AS}*{Corollary following Theorem 2} or \cite{BO}*{Theorem 4.3.4 (1)}. An intermediate step for this result is to prove that every non-zero ideal $I$ in the C*-algebra $C(X) \rtimes_{\alpha} G$ satisfies $I \cap C(X) \neq 0$ if the dynamical system is topologically free, see \cite{AS}*{Theorem 2}. In view of Proposition~\ref{prop:top free gives IIP}, the analogous statement for $\CO[X,P,\theta]$ and $(X,P,\theta)$ has already been established. In fact, Theorem~\ref{thm:top free char} revealed that these conditions are equivalent.

But in contrast to the case of group actions, topological freeness is implied by minimality for irreversible $*$-commutative dynamical systems of finite type, see Proposition~\ref{prop:min -> top free}. The proof of this result is an adaptation of \cite{EV}*{Proposition 11.1}. Once this is accomplished, we show that $\CO[X,P,\theta]$ is simple if and only if $(X,P,\theta)$ is minimal, see Theorem~\ref{thm:simple iff minimal}. Hence we achieve a direct generalization of \cite{EV}*{Theorem 11.2}, if we suppress the additional requirement that each $\theta_p$ is assumed to be regular, see Definition~\ref{def:SID}. We note that this extra condition is assumed in \cite{EV}*{Section 9} as well, but not in \cite{EV}*{Sections 8,10 and 11}. As an application of Theorem~\ref{thm:simple iff minimal}, we characterize simplicity of $\CO[G,P,\theta]$ for commutative irreversible algebraic dynamical systems of finite type by minimality of $(G,P,\theta)$, see Corollary~\ref{cor:char min-simple for CIADoFT}.


\begin{definition}\label{def:orbit sgp action}
Let $Z$ be a topological space, $S$ a commutative semigroup and $S \stackrel{\eta}{\curvearrowright} Z$ a semigroup action by continuous maps. For $x \in Z$,
\[\CO(x) := \{\eta_t^{-1}(\eta_s(x)) \mid s,t \in S\} \subset Z\]
is called the \textit{orbit} of $x$ under $\eta$. Two elements $x,y \in Z$ are called \textit{orbit-equivalent}, denoted by $x \sim y$, if $\CO(x) = \CO(y)$.
\end{definition}

\noindent Note that $x$ and $y$ are orbit-equivalent if and only if there are $s,t \in S$ such that $\eta_s(x) = \eta_t(y)$. This definition is the natural generalization of trajectory-equivalence as defined in \cite{EV}*{Section 11}. $\sim$ is an equivalence relation because $S$ is commutative. 

\begin{definition}
$Y \subset Z$ is called \textit{invariant}, if $\eta_s^{-1}(Y) = Y$ for all $s \in S$.
\end{definition}

\noindent The proof of the following lemma is straightforward and therefore omitted.

\begin{lemma}\label{lem:inv preimage}
Let $Z$ be a topological space, $S$ a commutative semigroup and $S \stackrel{\eta}{\curvearrowright} Z$ a semigroup action by continuous, surjective maps. Then $Y \subset Z$ is invariant if and only if $x \sim y \in Y$ implies $x \in Y$ for all $x \in Z$.
\end{lemma}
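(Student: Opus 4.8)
The claim is an equivalence, so I would prove the two implications separately, and the whole argument is elementary manipulation with preimages, using only that the $\eta_s$ are continuous (irrelevant here) and surjective (crucial for the forward direction).

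First, suppose $Y$ is invariant, i.e. $\eta_s^{-1}(Y) = Y$ for all $s \in S$. Let $x \in Z$ with $x \sim y$ for some $y \in Y$. By the remark following Definition~\ref{def:orbit sgp action}, $x \sim y$ means there are $s,t \in S$ with $\eta_s(x) = \eta_t(y)$. Since $y \in Y$ and $Y = \eta_t^{-1}(Y)$ is equivalent to $\eta_t(Y) \subset Y$ together with $Y \subset \eta_t^{-1}(Y)$, we get $\eta_t(y) \in Y$; hence $\eta_s(x) = \eta_t(y) \in Y$, so $x \in \eta_s^{-1}(Y) = Y$. This gives the forward implication, and note it does not even need surjectivity.

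Conversely, suppose that $x \sim y \in Y$ implies $x \in Y$ for all $x \in Z$. Fix $s \in S$; I must show $\eta_s^{-1}(Y) = Y$. For the inclusion $\eta_s^{-1}(Y) \subset Y$: if $x \in \eta_s^{-1}(Y)$, then $\eta_s(x) \in Y$, and since $\eta_s(x) = \eta_s(x) = \eta_{1_S}$\dots — more carefully, $x$ and $\eta_s(x)$ are orbit-equivalent because $\eta_s(x) = \eta_s(x)$ realizes the relation with $t = s$, $s = 1$\dots here one uses that $S$ is a monoid, or simply observes directly that $\eta_s(x) \in \CO(x)$, hence $\CO(\eta_s(x)) = \CO(x)$, i.e. $x \sim \eta_s(x) \in Y$, whence $x \in Y$ by hypothesis. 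For the reverse inclusion $Y \subset \eta_s^{-1}(Y)$: let $y \in Y$. I need $\eta_s(y) \in Y$. Here is where surjectivity of $\eta_s$ enters: pick any $z \in Z$ with $\eta_s(z) = \eta_s(y)$ — wait, that only shows things about preimages of $y$. Instead, observe $\eta_s(y) \sim y$ by the same reasoning as above (they share an orbit), and $y \in Y$, so by hypothesis $\eta_s(y) \in Y$; thus $y \in \eta_s^{-1}(Y)$. Actually surjectivity is needed to guarantee $\CO$ behaves symmetrically — precisely, that $\eta_s(y) \sim y$ requires no surjectivity, but for $\eta_s^{-1}(Y)$ to be all of $Y$ one wants every fiber nonempty only implicitly; in fact the argument above closes without it, so I would present it cleanly and remark that surjectivity of the $\eta_s$ ensures $\sim$ is genuinely an equivalence relation as noted after Definition~\ref{def:orbit sgp action}.

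The only subtle point — and the place I would be most careful — is the bookkeeping of when $x \sim \eta_s(x)$: this follows immediately since $\eta_s(x) = \eta_s(x)$ puts $\eta_s(x)$ in $\CO(x)$, and $\sim$ being an equivalence relation (guaranteed by commutativity of $S$, Definition~\ref{def:orbit sgp action}) then gives $\CO(\eta_s(x)) = \CO(x)$. Once this is in hand both inclusions in $\eta_s^{-1}(Y) = Y$ are one-liners. Since the paper explicitly says the proof is straightforward and omits it, I would in fact not include a proof at all, merely leaving the statement; but were one wanted, the three paragraphs above constitute it.
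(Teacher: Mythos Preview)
Your argument is correct and is exactly the straightforward unpacking of definitions that the paper has in mind; indeed the paper omits the proof entirely, and your own suggestion to do the same matches this. One small clarification on your hedging about surjectivity: it is in fact not needed anywhere in this lemma (nor for $\sim$ to be an equivalence relation --- commutativity alone gives transitivity via $\eta_{s's}(x) = \eta_{tt'}(z)$), so you can drop those caveats and present the two inclusions cleanly as you outlined.
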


\noindent In the case of actions by homeomorphisms, it is well-known that invariance of a subset passes to its closure. This is not clear for general irreversible transformations, but it is true for actions by local homeomorphisms.  
This is certainly well-known, but not easy to find in the literature, so we include a proof for convenience.

\begin{lemma}\label{lem:preimage and closure}
Let $Z$ be a topological space, $S$ a commutative semigroup and $S \stackrel{\eta}{\curvearrowright} Z$ a semigroup action by local homeomorphisms. For every $Y \subset Z$ and $s \in S$, we have $\eta_s^{-1}(\overline{Y}) = \overline{\eta_s^{-1}(Y)}$.
\end{lemma}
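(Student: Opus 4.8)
The plan is to prove the two inclusions separately, invoking only that each $\eta_s$ is continuous (which yields "$\supseteq$") and open (which yields "$\subseteq$"); both properties are automatic since $\eta_s$ is a local homeomorphism.

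First I would dispatch the inclusion $\overline{\eta_s^{-1}(Y)} \subseteq \eta_s^{-1}(\overline{Y})$. By continuity of $\eta_s$ and closedness of $\overline{Y}$, the set $\eta_s^{-1}(\overline{Y})$ is closed; since it obviously contains $\eta_s^{-1}(Y)$, it contains $\overline{\eta_s^{-1}(Y)}$ as well.

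For the reverse inclusion $\eta_s^{-1}(\overline{Y}) \subseteq \overline{\eta_s^{-1}(Y)}$, I would argue pointwise: fix $x \in Z$ with $\eta_s(x) \in \overline{Y}$ and let $U$ be an arbitrary open neighbourhood of $x$; it suffices to check $U \cap \eta_s^{-1}(Y) \neq \emptyset$. Since $\eta_s$ is a local homeomorphism it is an open map, so $\eta_s(U)$ is an open neighbourhood of $\eta_s(x) \in \overline{Y}$, whence $\eta_s(U) \cap Y \neq \emptyset$. Picking $y$ in this intersection, we may write $y = \eta_s(u)$ with $u \in U$, and then $u \in U \cap \eta_s^{-1}(Y)$. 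As $U$ was arbitrary this gives $x \in \overline{\eta_s^{-1}(Y)}$, completing the proof.

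I do not expect a genuine obstacle here; the only point worth emphasising is that the argument really uses openness of $\eta_s$ — and hence the local homeomorphism hypothesis — and fails for general continuous surjections, consistent with the remark preceding the lemma that invariance need not pass to closures for arbitrary irreversible transformations. (If one wishes, the same local neighbourhood on which $\eta_s$ is a homeomorphism onto an open set can be used in place of bare openness, but this is not needed.)
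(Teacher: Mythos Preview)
Your proof is correct and follows essentially the same route as the paper: the easy inclusion via continuity, and the nontrivial inclusion via openness of the local homeomorphism $\eta_s$. The only difference is cosmetic: the paper picks a neighbourhood on which $\eta_s$ restricts to a homeomorphism, takes a net in $Y$ converging to $\eta_s(x)$, and pulls it back through the local inverse, whereas you use the neighbourhood characterization of closure directly and invoke only that $\eta_s$ is an open map --- a slightly leaner formulation of the same idea.
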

\begin{proof}
The map $\eta_s$ is continuous, so $\eta_s^{-1}(\overline{Y})$ is a closed subset of $Z$ containing $\eta_s^{-1}(Y)$ and hence $\overline{\eta_s^{-1}(Y)} \subset \eta_s^{-1}(\overline{Y})$. To prove the reverse inclusion, let $x \in \eta_s^{-1}(\overline{Y})$. Since $\eta_s$ is a local homeomorphism, there is an open neighbourhood $U$ of $x$ such that $\eta_s|_{U}: U \longrightarrow \eta_s(U)$ is a homeomorphism. Due to $\eta_s(x) \in \overline{Y}$, there is a net $(y_{\lambda})_{\lambda \in \Lambda} \subset Y$ such that $y_{\lambda} \stackrel{\lambda \rightarrow \infty}{\longrightarrow} \eta_s(x)$. Note that $\eta_s(U)$ is open and contains $\eta_s(x)$. Hence, we can assume $(y_{\lambda})_{\lambda \in \Lambda} \subset Y \cap \eta_s(U)$ without loss of generality. Now, $x_{\lambda} := \eta_s|_{U}^{-1}(y_{\lambda})$ defines a net $(x_{\lambda})_{\lambda \in \Lambda} \subset \eta_s^{-1}(Y) \cap U$ and continuity of $\eta_s|_{U}^{-1}$ gives $x_{\lambda} \longrightarrow x$. Therefore, we have shown that $x \in \overline{\eta_s^{-1}(Y)}.$     
\end{proof}  

\begin{corollary}\label{cor:orbit closure inv} 
Let $Z$ be a topological space, $S$ a commutative semigroup and $S \stackrel{\eta}{\curvearrowright} Z$ a semigroup action by local homeomorphisms. If $Y \subset Z$ is invariant, then so is $\overline{Y}$. In particular, the closure of the orbit $\CO(x)$ is invariant for every $x \in Z$.
\end{corollary}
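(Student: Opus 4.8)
The plan is to reduce both assertions to Lemma~\ref{lem:preimage and closure}. For the first part, suppose $Y \subset Z$ is invariant, so that $\eta_s^{-1}(Y) = Y$ for every $s \in S$. Applying Lemma~\ref{lem:preimage and closure} to $Y$ and each $s \in S$ yields
\[\eta_s^{-1}(\overline{Y}) = \overline{\eta_s^{-1}(Y)} = \overline{Y},\]
so $\overline{Y}$ is invariant as well, and nothing further is needed for this half.

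For the ``in particular'' clause it therefore suffices to check that $\CO(x)$ is itself invariant for every $x \in Z$; its closure is then invariant by the part just proved. Recall from the remark following Definition~\ref{def:orbit sgp action} that $\CO(x)$ is precisely the $\sim$-equivalence class of $x$, i.e.\ $z \in \CO(x)$ if and only if $\eta_a(z) = \eta_b(x)$ for some $a,b \in S$. I would then verify directly that $\eta_s^{-1}(\CO(x)) = \CO(x)$ for all $s \in S$. If $z \in \eta_s^{-1}(\CO(x))$, then $\eta_s(z) \sim x$, say $\eta_u(\eta_s(z)) = \eta_v(x)$ with $u,v \in S$; since $S$ is commutative this reads $\eta_{us}(z) = \eta_v(x)$, whence $z \sim x$, i.e.\ $z \in \CO(x)$. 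Conversely, if $z \in \CO(x)$, pick $a,b \in S$ with $\eta_a(z) = \eta_b(x)$; then $\eta_a(\eta_s(z)) = \eta_{sa}(z) = \eta_s(\eta_a(z)) = \eta_s(\eta_b(x)) = \eta_{sb}(x)$, so $\eta_s(z) \sim x$, i.e.\ $\eta_s(z) \in \CO(x)$ and hence $z \in \eta_s^{-1}(\CO(x))$.

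There is essentially no real obstacle: all the content sits in Lemma~\ref{lem:preimage and closure}, whose purpose is exactly to replace the classical fact that invariance passes to closures for actions by homeomorphisms. The only minor point worth attention is that the alternative criterion of Lemma~\ref{lem:inv preimage} is stated under a surjectivity hypothesis which is absent from the statement of this corollary, so I prefer the direct computation of $\eta_s^{-1}(\CO(x)) = \CO(x)$ given above, which uses only continuity and the commutativity of $S$, over quoting Lemma~\ref{lem:inv preimage}.
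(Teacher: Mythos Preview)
Your proof is correct and follows the same approach as the paper: for the first assertion you apply Lemma~\ref{lem:preimage and closure} exactly as the paper does, obtaining $\eta_s^{-1}(\overline{Y}) = \overline{\eta_s^{-1}(Y)} = \overline{Y}$. The paper's proof stops there and leaves the ``in particular'' clause implicit, whereas you spell out the verification that $\CO(x)$ itself is invariant; your observation that Lemma~\ref{lem:inv preimage} carries a surjectivity hypothesis not present here, and that a direct computation is therefore cleaner, is a fair point.
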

\begin{proof}
For every $s \in S$, we get $\eta_s^{-1}(\overline{Y}) = \overline{\eta_s^{-1}(Y)} = \overline{Y}$ from Lemma~\ref{lem:preimage and closure} and the invariance of $Y$.  
\end{proof}

\begin{definition}\label{def:minimality for top sgp systems}
Let $Z$ be a topological space, $S$ a commutative semigroup and $S \stackrel{\eta}{\curvearrowright} Z$ a semigroup action by surjective local homeomorphisms. The dynamical system $(Z,S,\eta)$ is said to be \textit{minimal}, if $\emptyset$ and $Z$ are the only open invariant subsets of $Z$.
\end{definition}

\begin{remark}
In the above definition, one can replace open by closed. In \cite{EV}, this property is called irreducibility, possibly to avoid confusion with a notion of minimality apparently used for the groupoid picture.
\end{remark}

\begin{corollary}\label{cor:min via orbits}
A dynamical system $(Z,S,\eta)$ as in Definition~\ref{def:minimality for top sgp systems} is minimal if and only if $\CO(x) \subset Z$ is dense for all $x \in Z$.
\end{corollary}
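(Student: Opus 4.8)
The plan is to prove both implications directly from the definition of minimality, using the invariance of orbit closures established in Corollary~\ref{cor:orbit closure inv} together with the characterization of invariance in terms of orbit-equivalence from Lemma~\ref{lem:inv preimage}.

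First I would prove the ``if'' direction by contraposition. Suppose $(Z,S,\eta)$ is not minimal, so there is a proper non-empty closed invariant subset $Y \subsetneq Z$ (here I use the remark that one may replace open by closed in Definition~\ref{def:minimality for top sgp systems}). Pick any $x \in Y$. By Lemma~\ref{lem:inv preimage}, invariance of $Y$ means precisely that $y \sim x$ implies $y \in Y$, so $\CO(x) \subset Y$, and hence $\overline{\CO(x)} \subset \overline{Y} = Y \subsetneq Z$. Thus $\CO(x)$ is not dense, which contradicts the density hypothesis. This shows density of all orbits forces minimality.

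Conversely, assume $(Z,S,\eta)$ is minimal and fix $x \in Z$. By Corollary~\ref{cor:orbit closure inv}, the set $\overline{\CO(x)}$ is invariant, and it is closed by definition and non-empty since $x \in \CO(x)$ (note $\eta_s(x) = \eta_s(x)$ witnesses $x \sim x$, so $x \in \CO(x)$). Minimality then forces $\overline{\CO(x)} = Z$, i.e.\ $\CO(x)$ is dense in $Z$. Since $x$ was arbitrary, every orbit is dense.

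The argument is essentially bookkeeping once the two input lemmas are in hand, so there is no genuine obstacle; the only point requiring a moment's care is to make sure one is allowed to pass freely between the ``open invariant'' and ``closed invariant'' formulations of minimality (justified by the remark following Definition~\ref{def:minimality for top sgp systems}, since complements of open invariant sets are closed invariant sets, invariance being defined by the equality $\eta_s^{-1}(Y) = Y$ which passes to complements) and to recall that $x$ itself lies in $\CO(x)$ so that $\overline{\CO(x)}$ is non-empty.
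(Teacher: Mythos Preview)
Your proof is correct and is essentially a fleshed-out version of the paper's one-line argument, which simply says the claim follows immediately from Corollary~\ref{cor:orbit closure inv}. You make explicit the two ingredients (orbit closures are invariant, and invariant sets contain orbits of their points via Lemma~\ref{lem:inv preimage}) that the paper leaves to the reader.
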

\begin{proof}
This follows immediately from Corollary~\ref{cor:orbit closure inv}. 
\end{proof}

\noindent From now on, let us assume that $(X,P,\theta)$ is an irreversible $*$-commutative dynamical systems of finite type. The next proposition is based on \cite{EV}*{Proposition 11.1}.

\begin{proposition}\label{prop:min -> top free}
If $(X,P,\theta)$ is minimal, then it is topologically free.
\end{proposition}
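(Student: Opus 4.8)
The plan is to argue by contraposition, extracting from a failure of topological freeness a proper $P$-invariant open subset of $X$, in conflict with minimality. Suppose then that $(X,P,\theta)$ is not topologically free, so that $E:=\{x\in X\mid\theta_p(x)=\theta_q(x)\}$ has nonempty interior for some $p\neq q$ in $P$. First I reduce to the case where $p$ and $q$ are relatively prime: writing $d:=p\wedge q$, $p=da$, $q=db$ with $a\wedge b=1_P$ and $a\neq b$, and using commutativity of $P$ to write $\theta_p=\theta_a\circ\theta_d$ and $\theta_q=\theta_b\circ\theta_d$, one sees that $\theta_a$ and $\theta_b$ agree on $\theta_d(\operatorname{int}E)$, which is a nonempty open set because a local homeomorphism is an open map. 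Relabelling, I assume henceforth that $p\wedge q=1_P$ and $p\neq q$.

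Next I record the two structural facts that drive the argument. The set $E$ is closed (as $X$ is Hausdorff) and \emph{forward $P$-invariant}: if $\theta_p(x)=\theta_q(x)$, then $\theta_p(\theta_s(x))=\theta_s(\theta_p(x))=\theta_s(\theta_q(x))=\theta_q(\theta_s(x))$ for every $s\in P$, since the $\theta_s$ commute. Hence $V:=\operatorname{int}E$ is nonempty, open, and forward $P$-invariant, because $\theta_s(V)\subseteq\theta_s(E)\subseteq E$ is open and thus contained in $\operatorname{int}E=V$. The key auxiliary claim is that a nonempty open forward $P$-invariant set $V$ must be all of $X$ when $(X,P,\theta)$ is minimal. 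For this, consider $\Omega:=\bigcup_{r\in P}\theta_r^{-1}(V)$, which is open and contains $V$. Forward invariance means $V\subseteq\theta_s^{-1}(V)$ for all $s$, hence $\theta_r^{-1}(V)\subseteq\theta_r^{-1}(\theta_s^{-1}(V))=\theta_{rs}^{-1}(V)$, which gives $\theta_s^{-1}(\Omega)=\bigcup_{r\in P}\theta_{rs}^{-1}(V)=\Omega$; so $\Omega$ is invariant in the sense of Definition~\ref{def:minimality for top sgp systems}, whence $\Omega=X$ by minimality. Compactness of $X$ yields a finite $F\subseteq P$ with $X=\bigcup_{r\in F}\theta_r^{-1}(V)$, and putting $s_0:=\bigvee_{r\in F}r$, forward invariance gives $\theta_{s_0}(x)\in V$ for all $x\in X$; since $\theta_{s_0}$ is surjective, $X=\theta_{s_0}(X)\subseteq V$, i.e. $V=X$.

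Applying this to $V=\operatorname{int}E$ forces $E=X$, that is, $\theta_p=\theta_q$ on all of $X$ with $p\neq q$ relatively prime. If $p=1_P$ (so $q\neq1_P$), then $\theta_q=\mathrm{id}_X$ is injective; but Definition~\ref{def:SID}(C) applied to the pair $(q,q)$ says $\theta_q$ does not $\ast$-commute with itself, and by Proposition~\ref{prop: eq star ind}~(ii) that is equivalent to $\theta_q$ being non-injective --- a contradiction (the case $q=1_P$ is symmetric). If $p,q\neq1_P$, then, $p$ and $q$ being relatively prime, Definition~\ref{def:SID}(C) gives that $\theta_p$ and $\theta_q$ $\ast$-commute; as $\theta_p=\theta_q$, $\theta_p$ $\ast$-commutes with itself, so $\theta_p$ is injective by Proposition~\ref{prop: eq star ind}~(ii), while the same reference applied to $(p,p)$ shows $\theta_p$ is non-injective since $p\neq1_P$ --- again a contradiction. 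Hence $(X,P,\theta)$ is topologically free. The one step I expect to require genuine care is the auxiliary claim on forward-invariant open sets: one must notice that $\bigcup_{r}\theta_r^{-1}(V)$ is truly $\theta_s^{-1}$-invariant (not merely forward invariant) and then close the loop via a finite subcover together with surjectivity of the $\theta_s$ --- this is precisely where the lattice order on $P$ and the surjectivity in Definition~\ref{def:SID}(C) are used.
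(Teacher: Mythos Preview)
Your proof is correct and follows essentially the same strategy as the paper's: argue by contraposition, use minimality and compactness to promote the local coincidence $\theta_p=\theta_q$ on an open set to a global equality, and then extract a contradiction from Definition~\ref{def:SID}~(C) via Proposition~\ref{prop: eq star ind}. Your organization is slightly different---you reduce to the relatively prime case at the outset (using that $\theta_d$ is an open map) and isolate the claim ``a nonempty open forward $P$-invariant set equals $X$'' as a separate step---whereas the paper works with the full orbit $\bigcup_{s,t}\theta_s^{-1}(\theta_t(U))$ and peels off the common factor $p\wedge q$ only at the end via surjectivity of $\theta_{(p\wedge q)s}$; but the underlying ideas are the same.
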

\begin{proof}
Let us assume that $(X,P,\theta)$ is minimal, but not topologically free and derive a contradiction. Assume that there exist $p,q \in P$ with $p \neq q$ such that $\theta_{p|U} = \theta_{q|U}$ on some non-empty, open subset $U$ of $X$. Clearly, $\bigcup_{s,t \in P}{\theta_{s}^{-1}(\theta_{t}(U))} \subset X$ is invariant, non-empty and open. Since the dynamical system is minimal, this set is all of $X$. Since each $\theta_{s}^{-1}(\theta_{t}(U))$ is open and $X$ is compact, we can shrink the open cover $(\theta_{s}^{-1}(\theta_{t}(U)))_{s,t \in P}$ to a finite, open cover of $X$ given by $s_{1},\dots,s_n,t_1,\dots,t_n$. Next, fix an arbitrary $x \in X$ and let $i$ satisfy $x \in \theta_{s_i}^{-1}(\theta_{t_i}(U))$, i.e. there is $y \in U$ such that $\theta_{s_i}(x) = \theta_{t_i}(y)$. Then
\[\theta_{ps_i}(x) = \theta_{pt_i}(y) = \theta_{t_ip}(y) \stackrel{y \in U}{=} \theta_{t_iq}(y) = \theta_{qt_i}(y) = \theta_{qs_i}(x)\]   
and if we take $s := \bigvee\limits_{j = 1}^{n}{s_j}$, we get 
\[\theta_{ps}(x) = \theta_{s_i^{-1}s}\theta_{ps_i}(x) = \theta_{s_i^{-1}s}\theta_{qs_i}(x) = \theta_{qs}(x)\] 
for all $x$ in $X$. Hence, we have $\theta_{ps} = \theta_{qs}$. As $\theta_{(p \wedge q)s}$ is surjective and $P$ is commutative, this implies $\theta_{(p \wedge q)^{-1}p} = \theta_{(p \wedge q)^{-1}q}$. Without loss of generality, we can assume $(p \wedge q)^{-1}p \neq 1_P$, since $p \neq q$ forces $(p \wedge q)^{-1}p \neq 1_P$ or $(p \wedge q)^{-1}q \neq 1_P$. Using $\ast$-commutativity for $\theta_{(p \wedge q)^{-1}p}$ and $\theta_{(p \wedge q)^{-1}q}$ in the form of Proposition~\ref{prop: eq star ind}~(iii) yields 
\[\theta_{(p \wedge q)^{-1}p}^{-1}(\theta_{(p \wedge q)^{-1}q}(x)) = \theta_{(p \wedge q)^{-1}q}(\theta_{(p \wedge q)^{-1}p}^{-1}(x)) = \{x\}.\]
However, $(p \wedge q)^{-1}p \neq 1_P$ implies that the cardinality of the set on the left hand side is strictly larger than one, see Definition~\ref{def:SID}~(C). Thus, we obtain a contradiction.   
\end{proof}

\begin{theorem}\label{thm:simple iff minimal}
Let $(X,P,\theta)$ be an irreversible $*$-commutative dynamical system of finite type. Then the C*-algebra $\CO[X,P,\theta]$ is simple if and only if $(X,P,\theta)$ is minimal.
\end{theorem}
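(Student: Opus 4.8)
The plan is to prove both implications separately, using the tools assembled in the preceding sections. For the easy direction, suppose $(X,P,\theta)$ is not minimal. Then, by Corollary~\ref{cor:min via orbits}, there is a point $x \in X$ whose orbit closure $\overline{\CO(x)}$ is a proper, non-empty closed subset $Y \subsetneq X$ that is invariant by Corollary~\ref{cor:orbit closure inv}. Set $J := \{f \in C(X) \mid f|_Y = 0\}$, a non-zero proper ideal of $C(X)$, and let $I$ be the ideal of $\CO[X,P,\theta]$ generated by $J$. One checks, using relations (I)--(IV) and the density result Lemma~\ref{lem:O-alg SIDoFT - lin span}, that $I$ is a proper ideal: invariance of $Y$ means precisely that $\alpha_p(J) \subset J$ and $L_p(J) \subset J$ (the latter because $\theta_p^{-1}(Y) = Y$), so the linear span of $\{f s_p s_q^* g \mid f \in J \text{ or } g \in J\}$ is already closed under multiplication, hence dense in $I$; applying the conditional expectation $G$ from Corollary~\ref{cor:cond exp to C(X)} shows $G(I) \subset J \neq C(X)$, so $1 \notin I$. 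As $I \supset J \neq 0$, we conclude $\CO[X,P,\theta]$ is not simple.

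For the converse, assume $(X,P,\theta)$ is minimal. By Proposition~\ref{prop:min -> top free} it is topologically free, and then by Theorem~\ref{thm:top free char} (the implication (1)$\Rightarrow$(2)) every non-zero ideal $I$ of $\CO[X,P,\theta]$ satisfies $I \cap C(X) \neq 0$. Now $I \cap C(X)$ is a non-zero ideal of $C(X) = C(X)$, hence of the form $\{f \in C(X) \mid f|_Y = 0\}$ for some proper closed $Y \subsetneq X$. The key point is that $I \cap C(X)$ must be invariant under the dynamics in the appropriate sense: since $I$ is an ideal of $\CO[X,P,\theta]$, for $f \in I \cap C(X)$ we have $s_p^* f s_p = L_p(f) \in I \cap C(X)$ and $\alpha_p(f) = s_p f s_p^* \cdot (\text{something}) $, more cleanly $\alpha_p(f)$ lies in $I$ because $\alpha_p(f) s_p = s_p f \in I$ and $s_p$ is an isometry, so $\alpha_p(f) = \alpha_p(f) s_p s_p^* $ need not work directly; instead use $\alpha_p(f) \nu_i s_p s_p^* \nu_i$ summed over a partition of unity via (IV) to recover $\alpha_p(f) \in I \cap C(X)$. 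Hence the zero set $Y$ of $I \cap C(X)$ satisfies $\theta_p(Y) \subset Y$ (from $L_p$-invariance, giving $\theta_p^{-1}(Y) \subset Y$, equivalently $Y \subset \theta_p(Y)$... one must be careful) — the upshot is $Y$ is an invariant closed set in the sense of Definition following Lemma~\ref{lem:inv preimage}, so by minimality $Y = \emptyset$, i.e. $I \cap C(X) = C(X)$. Thus $1 \in I$ and $I = \CO[X,P,\theta]$, proving simplicity.

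The main obstacle I anticipate is bookkeeping the exact sense of invariance: the natural ideal-theoretic conditions one extracts are $L_p(I \cap C(X)) \subset I \cap C(X)$ and $\alpha_p(I \cap C(X)) \subset I \cap C(X)$, and one must translate these into the statement $\theta_p^{-1}(Y) = Y$ for the common zero set $Y$, matching Lemma~\ref{lem:inv preimage} and Definition~\ref{def:minimality for top sgp systems}. The inclusion $\alpha_p(J) \subset J$ is equivalent to $Y \subset \overline{\theta_p^{-1}\text{-}\text{nothing}}$; concretely $\alpha_p(f)(x) = f(\theta_p(x))$ vanishes on $Y$ iff $\theta_p(Y) \subset Y$, while $L_p(f)$ vanishing on $Y$ whenever $f$ does is equivalent to $\theta_p^{-1}(Y) \subset Y$; combining with surjectivity of $\theta_p$ gives $\theta_p^{-1}(Y) = Y$. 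Recovering $\alpha_p(f) \in I$ from $f \in I$ requires the reconstruction formula: $\alpha_p(f) = \alpha_p(f) \sum_i \nu_i s_p s_p^* \nu_i = \sum_i \nu_i \alpha_p(f) s_p s_p^* \nu_i = \sum_i \nu_i s_p f s_p^* \nu_i \in I$, using (I) and (IV), which is clean once set up. With these invariance identifications in place, minimality closes both directions.
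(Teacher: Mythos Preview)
Your approach is correct and follows essentially the same strategy as the paper's proof: use Proposition~\ref{prop:min -> top free} and Theorem~\ref{thm:top free char} to get the ideal intersection property, then show that $I \cap C(X)$ corresponds to an invariant subset and invoke minimality. The only notable difference is in how invariance of the zero set $Y$ is verified in the direction minimal $\Rightarrow$ simple. You establish $\alpha_p(I \cap C(X)) \subset I \cap C(X)$ and $L_p(I \cap C(X)) \subset I \cap C(X)$ separately (the former via the reconstruction formula $\alpha_p(f) = \sum_i \nu_i s_p f s_p^* \nu_i$, the latter via $L_p(f) = s_p^* f s_p$) and then translate these into $\theta_p^{-1}(Y) = Y$. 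The paper instead checks the orbit-equivalence criterion of Lemma~\ref{lem:inv preimage} directly: given $\theta_p(x) = \theta_q(y)$ with $y \in U$, it observes that
\[
\alpha_p(L_q(f)) \;=\; \sum_i \nu_i\, s_p s_q^* f\, s_q s_p^* \nu_i \;\in\; I \cap C(X)
\]
for $f \in I \cap C(X)$ and evaluates at $x$ to conclude $x \in U$. This handles the two indices $p,q$ in a single stroke, whereas your argument treats them one at a time; both are valid and the difference is purely organizational. Your contrapositive treatment of the other direction (constructing a proper ideal from a proper invariant closed set and using $G$ to see $1 \notin I$) is the same idea as the paper's, just phrased the other way around. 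The bookkeeping you flag in your final paragraph is exactly right and resolves cleanly once written out.
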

\begin{proof}
If we assume $\CO[X,P,\theta]$ to be simple, then $C(X)$ intersects every non-zero ideal in $\CO[X,P,\theta]$ non-trivially, so $(X,P,\theta)$ is topologically free by Theorem~\ref{thm:top free char}. Now suppose $\emptyset \neq U \subset X$ is invariant and open. Then 
\[\begin{array}{rclclcl}
\supp \alpha_{p}(f) &=& \theta_{p}^{-1}(\supp f) &\subset& \theta_{p}^{-1}(U) &=& U \\
\supp L_{p}(f) &\subset& \theta_{p}(\supp f) &\subset& \theta_{p}(U) &=& U 
\end{array}\]
holds for every $p \in P$ and $f \in C_{0}(U)$ because $U$ is invariant. From this we infer that the ideal $I$ in $\CO[X,P,\theta]$ generated by $C_{0}(U)$ satisfies $I \cap C(X) \subset C_{0}(U)$. But as $\CO[X,P,\theta]$ is simple and $U \neq \emptyset$, we have $I = \CO[X,P,\theta]$ and hence $U = X$.

Conversely, if $(X,P,\theta)$ is minimal and $0 \neq I$ is an ideal in $\CO[X,P,\theta]$, we have $I \cap C(X) = C_{0}(U)$ for some open $U \subset X$. Due to Proposition~\ref{prop:min -> top free}, $(X,P,\theta)$ is topologically free. Hence $U$ is non-empty according to Proposition~\ref{prop:top free gives IIP}. We claim that $U$ is invariant. To see why, let $x \sim y \in U$, i.e. there exist $p,q \in P$ such that $\theta_p(x) = \theta_q(y)$. Pick a non-negative function $f \in C_{0}(U)$ satisfying $f(y) > 0$ (such an $f$ always exists as $U$ is open and $X$ is a normal space). Additionally, choose $(\nu_i)_{1 \leq i \leq n}$ for $\theta_p$ as in Lemma~\ref{lem:loc homeo right reconstruction}. Using the relations (I),(II) and (IV) for $\CO[X,P,\theta]$, we get  
\[\begin{array}{c} \alpha_p(L_q(f)) = \sum\limits_{1 \leq i \leq n} \nu_is_ps_q^*fs_qs_p^*\nu_i \in I, \end{array}\]
which shows $\alpha_p(L_q(f)) \in C_{0}(U)$. Moreover, we have
\[\alpha_p(L_q(f))(x) = L_q(f)(\theta_p(x)) \geq N_q^{-1}f(y) > 0\]
because $f$ is non-negative and $y \in \theta_q^{-1}(\theta_p(x))$. Thus $x \in \supp \alpha_p(L_q(f)) \subset U$, so $U$ is invariant by Lemma~\ref{lem:inv preimage}. Since $U$ is a non-empty, invariant open subset of $X$, minimality forces $U = X$ and hence $I = \CO[X,P,\theta]$. Hence $\CO[X,P,\theta]$ is simple.
\end{proof}

\noindent Coming back to irreversible algebraic dynamical systems, we recall that we can only treat commutative irreversible algebraic dynamical systems of finite type within the framework of irreversible $*$-commutative dynamical systems of finite type, see Corollary~\ref{cor:CIAD vs SIDoFT}:

\begin{corollary}\label{cor:char min-simple for CIADoFT}
A commutative irreversible algebraic dynamical system of finite type $(G,P,\theta)$ is minimal if and only if the C*-algebra $\CO[G,P,\theta]$ is simple.
\end{corollary}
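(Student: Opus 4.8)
The plan is to transport the statement to the dual model. By Corollary~\ref{cor:CIAD vs SIDoFT}, the dual $(\hat{G},P,\hat{\theta})$ is an irreversible $*$-commutative dynamical system of finite type, and by Proposition~\ref{prop:consistent constr for CIADoFT} there is a canonical isomorphism $\CO[G,P,\theta] \cong \CO[\hat{G},P,\hat{\theta}]$. Hence $\CO[G,P,\theta]$ is simple if and only if $\CO[\hat{G},P,\hat{\theta}]$ is simple, which by Theorem~\ref{thm:simple iff minimal} holds if and only if $(\hat{G},P,\hat{\theta})$ is minimal. Everything therefore reduces to showing that $(\hat{G},P,\hat{\theta})$ is minimal precisely when $\bigcap_{p \in P}\theta_p(G) = \{1_G\}$, which is what minimality of $(G,P,\theta)$ means.

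I would prove this last equivalence by Pontryagin duality. Put $N_p := \theta_p(G)$, a finite-index subgroup of the discrete abelian group $G$; then $\hat{\theta}_p$ is a surjective continuous endomorphism of the compact group $\hat{G}$ with $\ker\hat{\theta}_p = N_p^{\perp}$, the annihilator of $N_p$ in $\hat{G}$, which is finite of order $[G:N_p]$ (compare Example~\ref{ex:transfer operator disc ab fin type}). Since $P$ is directed and $N_s \subset N_t$ whenever $t \mid s$, the annihilators $(N_p^{\perp})_{p \in P}$ form an increasing net of finite subgroups of $\hat{G}$, so $\bigcup_{p \in P} N_p^{\perp}$ is a subgroup. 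Using $(\overline{A})^{\perp} = A^{\perp}$, $(\bigcup_i A_i)^{\perp} = \bigcap_i A_i^{\perp}$, and $(N^{\perp})^{\perp} = N$ for subgroups $N$ of the discrete group $G$, one obtains that the annihilator of $\overline{\bigcup_{p \in P} N_p^{\perp}}$ equals $\bigcap_{p \in P} N_p$; in particular, $\overline{\bigcup_{p \in P} N_p^{\perp}} = \hat{G}$ if and only if $\bigcap_{p \in P} N_p = \{1_G\}$.

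It then remains to recognise $\overline{\bigcup_{p} N_p^{\perp}}$ as an orbit closure. Since $\hat{\theta}_p$ is a surjective group homomorphism, one has $\hat{\theta}_p^{-1}(\hat{\theta}_p(\chi)) = \chi \cdot \ker\hat{\theta}_p = \chi \cdot N_p^{\perp}$ for every $\chi \in \hat{G}$ and $p \in P$. Taking $\chi = 1_{\hat{G}}$, the orbit of the trivial character is $\CO(1_{\hat{G}}) = \bigcup_{p \in P}\ker\hat{\theta}_p = \bigcup_{p \in P} N_p^{\perp}$, while for arbitrary $\chi$ we get $\CO(\chi) \supset \bigcup_{p \in P} \hat{\theta}_p^{-1}(\hat{\theta}_p(\chi)) = \chi \cdot \bigcup_{p \in P} N_p^{\perp}$. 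If $(\hat{G},P,\hat{\theta})$ is minimal, then by Corollary~\ref{cor:min via orbits} the orbit $\CO(1_{\hat{G}})$ is dense, so $\overline{\bigcup_{p} N_p^{\perp}} = \hat{G}$ and hence $\bigcap_{p}\theta_p(G) = \{1_G\}$. Conversely, if $\bigcap_{p}\theta_p(G) = \{1_G\}$, then $\bigcup_{p} N_p^{\perp}$ is dense in $\hat{G}$, so its translate $\chi \cdot \bigcup_{p} N_p^{\perp}$ is dense for every $\chi \in \hat{G}$ (translation being a homeomorphism of $\hat{G}$), whence every orbit $\CO(\chi)$ is dense and $(\hat{G},P,\hat{\theta})$ is minimal by Corollary~\ref{cor:min via orbits}.

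The reductions to the dual model are immediate from the cited results, so the only real content is the last equivalence; its one delicate point is the annihilator-versus-union bookkeeping, which relies on the directedness of $P$, after which minimality of $(\hat{G},P,\hat{\theta})$ is seen to be exactly the Pontryagin dual of the condition $\bigcap_{p \in P}\theta_p(G) = \{1_G\}$.
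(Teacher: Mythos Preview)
Your proof is correct and follows essentially the same route as the paper: pass to the dual model via Corollary~\ref{cor:CIAD vs SIDoFT} and Proposition~\ref{prop:consistent constr for CIADoFT}, invoke Theorem~\ref{thm:simple iff minimal}, and then argue that minimality of $(G,P,\theta)$ is equivalent to minimality of $(\hat{G},P,\hat{\theta})$ via Corollary~\ref{cor:min via orbits}. The only difference is that the paper dispatches the last equivalence in one sentence (``it is easy to see that $(G,P,\theta)$ is minimal precisely if the union of the kernels is dense in $\hat{G}$'' and then uses the group structure to translate orbits), whereas you have carefully spelled out the Pontryagin-duality bookkeeping with annihilators and the directedness of $P$; your version is more detailed but not genuinely different.
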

\begin{proof}
By Corollary~\ref{cor:CIAD vs SIDoFT}, $(\hat{G},P,\hat{\theta})$ is an irreversible $*$-commutative dynamical system of finite type. According to Proposition~\ref{prop:consistent constr for CIADoFT}, $\CO[G,P,\theta]$ is isomorphic to $\CO[\hat{G},P,\hat{\theta}]$. It is easy to see that $(G,P,\theta)$ is minimal precisely if the union of the kernels $(\ker\theta_p)_{p \in P}$ is dense in $\hat{G}$. In other words, the orbit of $1_{\hat{G}}$ in the sense of Definition~\ref{def:orbit sgp action} is dense in $\hat{G}$. Since $\hat{G}$ is a group and we are dealing with group endomorphisms, $\CO(\chi)$ is dense in $\hat{G}$ for each $\chi \in \hat{G}$. By Corollary~\ref{cor:min via orbits}, this is equivalent to minimality of the topological dynamical system $(\hat{G},P,\hat{\theta})$. Now the claim follows directly from Theorem~\ref{thm:simple iff minimal}.
\end{proof}

\section*{References}
\begin{biblist}
\bibselect{bib}
\end{biblist}

\end{document}